\newcommand{\energy}{\mathcal{E}}
\newcommand{\pf}{\varphi}
\newcommand{\strain}[1][u]{{\bm \varepsilon}(\bm #1)}
\newcommand{\Div}{\nabla\cdot}
\newcommand{\Span}{\mathrm{span}}
\newcommand{\vecSpL}{\bm{L^2(\Omega)}}
\newcommand{\vecSpH}{\bm{H^1(\Omega)}}
\newcommand{\vecSpHTr}{\bm{H_0^1(\Omega)}}
\newcommand{\vecSpHM}{\bm{H^{-1}(\Omega)}}
\newcommand{\tensSpL}{\bm{\mathcal{L}^2(\Omega)}}
\newcommand{\regularization}{\eta}
\newtheorem{lemma}{Lemma}
\newtheorem{prop}{Proposition}
\newtheorem{theorem}{Theorem}
\newtheorem{corollary}{Corollary}
\theoremstyle{definition}
\newtheorem{definition}{Definition}
\theoremstyle{remark}
\newtheorem{remark}{Remark}
\pgfplotsset{compat=newest}
\definecolor{blau0} {RGB}{ 131 198 216} 
\definecolor{grau}  {RGB}{  0  84 159}
\definecolor{rot}   {RGB}{204   7  30}
\definecolor{blau2} {RGB}{  0  61 128}
\definecolor{grun2} {RGB}{  0 85   0}
\definecolor{rot2}  {RGB}{120   7  30}
\definecolor{rot3}{RGB}{255 190 203}
\definecolor{gelb}  {RGB}{70 70 70}
\definecolor{blau}{RGB}{0 144 188}
\definecolor{newblue1}{RGB}{0 144 188}
\definecolor{newblue2}{RGB}{197 216 227}
\definecolor{newgreen1}{RGB}{0 144 118}
\definecolor{grun}{RGB}{255 137 0}
\definecolor{newgreen2}{RGB}{197 222 215}
\definecolor{neworange1}{RGB}{255 137 0}
\definecolor{neworange2}{RGB}{255 205 105}
\newcommand{\revision}[1]{\textcolor{black}{#1}}
\newcommand\mathbox[1]{\mbox{$#1$}}
\def\ps@pprintTitle{
 \let\@oddhead\@empty
 \let\@evenhead\@empty
 \def\@oddfoot{}
 \let\@evenfoot\@oddfoot}
\title{Well-posedness analysis of the Cahn-Hilliard-Biot model}
\author[inst1]{Cedric Riethmüller\corref{cor1}}
\ead{cedric.riethmueller@ians.uni-stuttgart.de}
\author[inst2]{Erlend Storvik}
\ead{Erlend.Storvik@hvl.no}
\author[inst3]{Jakub Wiktor Both}
\ead{Jakub.Both@uib.no}
\author[inst3]{Florin Adrian Radu}
\ead{Florin.Radu@uib.no}
\affiliation[inst1]{organization={
Institute of Applied Analysis and Numerical Simulation,
University of Stuttgart},
            country={Germany}}
\affiliation[inst2]{organization={Department of Computer science, Electrical engineering and Mathematical sciences, Western Norway University of Applied Sciences},
            country={Norway}}
\affiliation[inst3]{organization={Center for Modeling of Coupled Subsurface Dynamics, Department of Mathematics, University
of Bergen},
country={Norway}}
\begin{document}

\begin{abstract}
We investigate the well-posedness of the recently proposed Cahn-Hilliard-Biot model. The model is a three-way coupled PDE of elliptic-parabolic nature, with several nonlinearities and the fourth order term known to the Cahn-Hilliard system. We show existence of weak solutions to the variational form of the equations and uniqueness under certain conditions of the material parameters and secondary consolidation, adding regularizing effects.  Existence is shown by discretizing in space and applying ODE-theory (the Peano-Cauchy theorem) to prove existence of the discrete system, followed by compactness arguments to retain solutions of the continuous system. In addition, the continuous dependence of solutions on the data is established, in particular implying uniqueness. Both results build strongly on the inherent gradient flow structure of the model.
\end{abstract}


\begin{keyword}
Cahn-Hilliard \sep poroelasticity \sep well-posedness \sep existence analysis \sep uniqueness \sep gradient flows
\end{keyword}

\maketitle

\section{Introduction}

Fluid flow in deformable media subject to phase changes appears in various natural systems. Relevant applications include tumor growth, wood growth and biogrout, which all center around dynamically changing material properties. In addition, these media can be viewed as porous materials limiting possible fluid dynamics to laminar flow conditions. Finally, an immediate coupling between the flow and structural properties of these materials requires multi-physics models to describe such systems covering the interaction of the different subproblems. Due to the tightly coupled and possibly nonlinear characteristics of such models, their mathematical analysis as well as development of robust numerical methods for their solution is often challenging and requires a problem-tailored effort.  

In this work, we consider the Cahn-Hilliard-Biot model that was recently proposed in \cite{Storvik2022}. This model is an extension of the quasi-static, linearized Biot equations for modeling single-phase flow in linearly elastic porous media. The solid material, besides being exposed to deformation due to typical elastic and hydraulic effects, is assumed to consist of two phases subject to phase change, feeding back on the elastic deformation itself as well. The evolution of phase changes is driven by interface tension, here approximated through a diffuse interface model closely related to the Cahn-Hilliard equation. The final model nonlinearly couples both the Biot equation and the Cahn-Hilliard model, and it allows for material parameters to continuously change with respect to the solid phase, resulting in a three-way coupled, nonlinear system of PDEs.


The goal of this work is to establish well-posedness results for the Cahn-Hilliard-Biot model. In particular, we investigate the existence of solutions in a proper weak sense, as well as the sensitivity of solutions with respect to data, implying uniqueness. The main difficulty lies in the both coupled, nonlinear, and (partly) non-convex character of the problem.

There are several well-posedness results for similar models. For the Cahn-Hilliard equation, we refer the reader to the classical works \cite{Elliott1986, Elliott1996}. A similar analysis, to the one that is presented here, is conducted for the so-called Cahn-Larch\'e equations, which couple the Cahn-Hilliard equation with linear elasticity equations in both \cite{Garcke2021} and \cite{FritzSubdiffusive2021}, the latter with a fractional time derivative. Other well-posedness results cover a related extended Cahn-Hilliard model coupling a 3D tumor growth model to a 1D network modeling angiogenesis~\cite{FritzMultispecies2021}, the Cahn-Hilliard equation coupled to nutrient transport~\cite{Colli2015}, a Cahn-Hilliard--Navier-Stokes model~\cite{Colli2012}, the Cahn-Hilliard equation coupled with Darcy's law~\cite{Garcke2016, Garcke2018}, and a Cahn-Hilliard-Brinkman model~\cite{Ebenbeck2019}.

On the other hand, the linear Biot equations have been extensively studied in the last decades. The existence of strong solutions has been established in the seminal work~\cite{Auriault1977}, providing the first well-posedness results for the linear Biot equations. Results including existence of weak solutions~\cite{Zenisek1984}, well-posedness from the perspectives of semi-group theory~\cite{Showalter2000} as well as generalized gradient flow theory~\cite{Both2019} have followed. Recently, the study of nonlinear extensions of the linear Biot equations, including nonlinear constitutive laws and/or additional physics, have seen increased interest, with this work falling into the same category and bridging the analysis of Biot equations and Cahn-Hilliard equations.

This work draws inspiration from many of the aforementioned works. The paper is structured in a two-fold manner where we first focus on proving existence of a solution to the system of equations and then move the attention to continuous dependence of the solution with respect to initial data. In order to prove the existence of a solution, a standard technique is employed, see e.g.~\cite{Garcke2021, Garcke2016, Lowengrub2013, Frigeri2015, FritzDarcy2019, FritzECM2019}. First, the weak system of equations are discretized in space using a Galerkin method, transforming the problem to a system of nonlinear ODEs. This system of ODEs are then showed to satisfy sufficient continuity conditions so that the Peano-Cauchy theorem can be applied to guarantee the existence of a, local-in-time, solution to the system of ODEs. Furthermore, bounds are provided for the discrete-in-space solutions which eventually imply global-in-time solutions and, using compactness arguments, converging sequences in appropriate function spaces. Finally, the limiting solution is showed to solve the system of equations in a proper weak sense. In order to discuss several nonlinearities in the model, connected to the non-convex character of the problem, sufficiently strong convergence is required. To ensure this, the Cahn-Hilliard-Biot model is enhanced to model light secondary consolidation. This acts as regularization, previously applied in different works on the analysis of nonlinear poroelasticity systems, e.g.,~\cite{Showalter2000,Bociu2016,Both2021,Bociu2023}; on the other hand, Kelvin-Voigt type viscoporoelasticity is highly relevant in biomedical applications~\cite{Greiner2021,Bociu2016,Bociu2023}. Furthermore, in part constant material parameters are required, while several parameters are still allowed to remain nonlinear, retaining the non-convex character of the free energy associated to the Cahn-Hilliard-Biot model.

The coupled Cahn-Hilliard-Biot model has an inherent, generalized gradient flow structure with respect to a natural combination of the Helmholtz energies associated with poroelasticity and the Cahn-Hilliard equation~\cite{Storvik2022}. In this paper, considering phase-independent material parameters, the gradient flow structure and the related energy minimization principles are exploited to derive a natural dissipation-energy identity and eventually show continuous dependence of solutions to the equation with respect to data. 

Additionally, we mention that the generalized gradient flow structure can be utilized to make robust solution strategies for the system of equations, as separately demonstrated for Cahn-Larch\'e equations~\cite{Storvik2023} and Biot equations~\cite{Both2019} - extensions to the model equations of interest are left for future research.


\revision{Very recently, two similar studies were presented independently in \cite{Fritz2023, Garcke2024}. The focus of the papers lies in the well-posedness of the Cahn-Hilliard-Biot model as well. Nevertheless, the model formulations in our and the aforementioned works differ. While~\cite{Fritz2023, Garcke2024} employ a hybrid formulation for the flow subproblem, using a pressure formulation with a weak relation between the pressure and volumetric fluid content, we exploit a standard mixed formulation of the flow subproblem, consistent with conservation properties of the model. Additionally, our analysis uses slightly different techniques and is strongly based on the highlighted gradient flow structure of the problem, in particular to provide continuous dependence of the solution with respect to data, formulated in terms of stronger regularity, similar to what is achieved in \cite{Garcke2024}. Moreover, due to the different formulation, and a regularized flow subproblem, the authors of \cite{Garcke2024} manage to provide well-posedness to the system with compressibility coefficient $M$ and Biot-Willis coefficient $\alpha$ depending on the phase-field.}

The remainder of the paper is structured as follows. In Section~\ref{sec:model}, the Cahn-Hilliard-Biot model is recalled, extended to include viscoporoelastic effects.  Additionally, a weak formulation is defined. In Section~\ref{sec:existence}, the existence of weak solutions is established for the nonlinear model, yet under the assumption of added secondary consolidation. The continuous dependence is discussed in Section~\ref{sec:continuous-dependence} for a simpler model with constant material parameters ensuring a convex Helmholtz free energy. Section~\ref{sec:conclusion} concludes the paper with remarks on future research directions and remaining open problems.

\section{The Cahn-Hilliard-Biot model and its variational formulation\label{sec:model}}
The Cahn-Hilliard-Biot system, as proposed in \cite{Storvik2022}, is a three-way coupled system of equations describing the coupled flow and deformation of a porous medium subject to phase changes. Let the medium be modeled by a bounded domain $\Omega\subset \mathbb{R}^d$, $d \in \{2,3\}$, with Lipschitz boundary~\cite{Alt2016}, and $[0,T]\subset \mathbb{R}$ the time interval with final time $0<T<\infty$. As primary variables for the system we consider the phase-field $\pf$, taking value $\pf = 1$ for one pure phase, and $\pf= -1$ for the other, together with the chemical potential $\mu$, the infinitesimal deformation $\bm u$, the volumetric fluid content $\theta$, and the fluid flux $\bm q$. In addition to~\cite{Storvik2022}, we extend the model with potential Kelvin-Voigt type viscoelastic effects, commonly important in biomedical applications~\cite{Greiner2021,Bociu2016,Bociu2023}.

We remark that the system proposed in \cite{Storvik2022} employs a pressure formulation for the fluid. However, the system has an inherent thermodynamical structure, as a result of it being a generalized gradient flow, and the equivalent volumetric fluid content formulation is the natural choice in this particular structure. For this reason, the Helmholtz energies related to the system are presented first, then balance laws for the phase-field, the linear momentum and volumetric fluid content are described. The system is closed by inferring natural thermodynamical relations between the rates of change of the free energies and the corresponding potentials, and constitutive equations. In contrast to the presentation in~\cite{Storvik2022}, we include secondary consolidation, adding a regularizing, viscoelastic effect, as considered in other works on well-posedness~\cite{Showalter2000,Bociu2016,Both2021,Bociu2023}.

\subsection{The free energy of the system} 
The free energy of the system as proposed in \cite{Storvik2022} is composed of an additive combination of the elastic energy $\energy_\mathrm{e}(\pf, \strain)$, the fluid energy $\energy_{\mathrm{f}}(\pf, \strain, \theta)$, and the regularized interface energy $\mathcal{E}_\mathrm{i}(\pf)$
\begin{equation} \label{eq:totenergy}
    \mathcal{E}(\varphi,\strain, \theta) = \mathcal{E}_\mathrm{e}(\varphi, \strain) +\mathcal{E}_\mathrm{f}(\varphi,\strain, \theta)+ \mathcal{E}_\mathrm{i}(\varphi).
\end{equation}

Here, the elastic energy takes a form that is commonly used in models coupling Cahn-Hilliard with elasticity \cite{Garcke2021}, and in Cahn-Larch\'e systems
\begin{equation}
    \mathcal{E}_\mathrm{e}(\pf,\strain) = \int_\Omega \frac{1}{2} \big({\bm \varepsilon}(\bm u) - \mathcal{T}(\varphi)\big)\!:\!\mathbb{C}(\varphi)\big({\bm \varepsilon}(\bm u) - \mathcal{T}(\varphi)\big) \;dx,
\end{equation}
where ${\bm \varepsilon}(\bm u) = \frac{1}{2}\left(\nabla \bm u + \nabla\bm u^\top\right)$ is the symmetric linearized strain tensor, $\mathcal{T}(\pf)$ denotes the eigenstrain which corresponds to the state of the strain tensor if the material was uniform and unstressed \cite{Fratzl1999}, and $\mathbb{C}(\varphi)$ denotes the potentially anisotropic and heterogeneous elastic stiffness tensor. We note that the special choice $\mathcal{T}(\pf) = \xi(\pf - \bar{\pf})\bm I$ accounts for swelling effects \cite{Areias2016} with swelling parameter $\xi$, (initial) reference value $\bar{\pf}$ and identity tensor $\bm I$, and follows Vegard's law \cite{Garcke2021}.

The fluid energy takes the form 
\begin{equation}
    \mathcal{E}_\mathrm{f}(\varphi,\strain, \theta) = \int_\Omega \frac{M(\varphi)}{2}\left(\theta - \alpha(\varphi)\nabla\cdot \bm u\right)^2\; dx
\end{equation}
where $\alpha(\pf)$ is the Biot-Willis coupling coefficient and $M(\pf)$ is the compressibility coefficient. Both are dependent on the solid phase $\pf$.

Finally, the regularized interface energy is given by
\begin{equation}
    \mathcal{E}_\mathrm{i}(\varphi) := \gamma\int_\Omega \frac{1}{\ell}\Psi(\varphi) + \frac{\ell}{2}|\nabla\varphi|^2\;dx,
\end{equation}
where $\gamma > 0$ denotes the interfacial tension, $\ell > 0$ is a small parameter associated with the width of the regularization layer, and $\Psi(\varphi)$ is a double-well potential penalizing deviations from the pure phases. Typically it is given as
\begin{equation}
    \Psi(\varphi) := \left(1-\varphi^2  \right)^2,
\end{equation}
having minimal values for the two pure phases $\pf = 1$ and $\pf = -1$, however, there are other \revision{more physically justified} examples in the literature as well, including the logarithmic one proposed in \cite{ElliottLogarithm1992} \revision{that becomes singular in the limit $|\pf| \rightarrow 1$}.

\subsection{Balance laws}
For the coupled system of equations the following balance laws are assumed to hold. 
The balance of linear momentum, neglecting inertial effects, balances the Cauchy stress and the externally applied body forces $\bm f$
\begin{equation}
    -\nabla \cdot \bm \sigma = \bm f.
\end{equation}
Furthermore, balance of volumetric fluid content with negligible density gradients are assumed
\begin{equation}
    \partial_t \theta + \nabla \cdot \bm q = S_f,
\end{equation}
where the volumetric fluid content $\theta$ changes with respect to the fluid flux $\bm q$ and sources $S_f$. 
The phase-field is assumed to change with respect to a flux $\bm J$ and reactions $R$
\begin{equation}
\partial_t\varphi + \nabla \cdot {\bm J} = R.
\end{equation}

\subsection{Thermodynamical relations}
Following thermodynamical principles, dual quantities can be derived by considering variational derivatives of the system's free energy $\energy$, introducing a generalized potential in form of stress, pressure and interfacial potential. 
In this work, we consider media with an effective Kelvin-Voigt behavior. Thus, we decompose the Cauchy stress into an elastic and a viscoelastic contribution $\bm \sigma = \bm \sigma_e + \bm \sigma_v$, with merely the elastic contribution being derived as dual variable
$$\bm \sigma_e := \delta_{\bm \varepsilon}\energy(\pf, \strain, \theta) = \delta_{\bm \varepsilon}\energy_\mathrm{e}(\pf,\strain) + \delta_{\bm \varepsilon}\energy_\mathrm{f}(\pf,\strain, \theta) ,$$
where 
\begin{align*}
\delta_{\bm \varepsilon}\energy_\mathrm{e}(\pf,\strain) &= \mathbb{C}(\pf)\big(\bm \varepsilon(\bm u) - \mathcal{T}(\pf) \big) \\
\delta_{\bm \varepsilon}\energy_\mathrm{f}(\pf,\strain, \theta) &= - M(\pf)\alpha(\pf)\big(\theta - \alpha(\pf)\nabla\cdot\bm u\big)\bm I
\end{align*}
The viscous part is restricted to depend on the volumetric strain with a state-independent scaling $\regularization \geq 0$
\begin{align*}
    \bm \sigma_v = \regularization \partial_t (\nabla\cdot\bm u) \bm I
\end{align*}
as commonly employed in the analysis of poroelasticity~\cite{Showalter2000,Bociu2016,Both2021,Bociu2023}. General laws $\bm{\sigma}_v = \mathbb{C}_v \partial_t \bm{\varepsilon}(\bm{u})$ can be discussed identically.

The pore pressure is defined dual to the volumetric fluid content 
$$p:= \delta_\theta\energy(\pf, \strain, \theta) = M(\pf)\big(\theta - \alpha(\pf)\nabla\cdot\bm u\big)$$
and the interfacial potential dual to the phase-field
$$\mu := \delta_{\pf}\energy(\pf, \strain, \theta) = \delta_\pf\mathcal{E}_\mathrm{i}(\pf) + \delta_\pf\energy_\mathrm{e}(\pf,\strain)+ \delta_\pf\energy_\mathrm{f}(\pf,\strain,\theta),$$
where 
\begin{align*}
\delta_\pf\mathcal{E}_\mathrm{i}(\pf)& = \frac{\gamma}{\ell} \Psi'(\pf) - \gamma\ell\Delta\pf, \\
\delta_\pf\energy_\mathrm{e}(\pf, \strain)& = \frac{1}{2} \big({\bm \varepsilon}(\bm u) - \mathcal{T}(\varphi)\big)\!:\!\mathbb{C}'(\varphi)\big({\bm \varepsilon}(\bm u) - \mathcal{T}(\varphi)\big)  - \mathcal{T}'(\varphi)\!:\!\mathbb{C}(\varphi)\big({\bm \varepsilon}(\bm u) - \mathcal{T}(\varphi)\big), \\
\delta_\pf\energy_\mathrm{f}(\pf,\strain,\theta)& = \frac{M'(\varphi)}{2}\left(\theta - \alpha(\varphi)\nabla\cdot \bm u\right)^2 - \alpha'(\pf)M(\varphi)\big(\theta - \alpha(\varphi)\nabla\cdot \bm u\big)\nabla\cdot\bm u.
\end{align*}
Concluding, we can identify the extension of a conventional effective stress decomposition for the Cauchy stress~\cite{Coussy2004} in the context of poro-viscoelasticity to Cahn-Hillard modeling
\begin{align*}
\bm \sigma = \regularization \partial_t \mathrm{tr}\big(\bm \varepsilon(\bm u)\big) \bm I + \mathbb{C}(\pf)\big(\bm \varepsilon(\bm u) - \mathcal{T}(\pf) \big) - \alpha(\pf) p \bm I.
\end{align*}

\subsection{Constitutive equations}
In order to close the model we impose constitutive relations in the form of 
 Darcy's law for the fluid flux
\begin{equation}
 \bm q = - \kappa(\pf) \nabla p,
\end{equation}
and Fick's law for the phase-field flux
\begin{equation}
 \bm J = -m(\pf)\nabla\mu,
\end{equation}
where the the permeability $\kappa(\pf)$ and the mobility $m(\pf)$ are allowed to depend on $\pf$.

\subsection{System of equations}
In total, we get the problem: Find $(\pf, \mu, \bm u, \theta, \bm q)$ for $(x,t)\in \Omega \times [0,T]$ such that
\begin{subequations}
\begin{align}
\partial_t \varphi - \nabla \cdot (m(\pf) \nabla \mu) &= R \label{eq:ch1theta}\\
\mu - \delta_\varphi\mathcal{E}_\mathrm{i}(\varphi) - \delta_\varphi\mathcal{E}_\mathrm{e}(\varphi, \strain) - \delta_\varphi \mathcal{E}_\mathrm{f}(\varphi, \strain, \theta) &= 0 \label{eq:ch2theta}\\
-\nabla \cdot\Big(\regularization \partial_t (\nabla \cdot \bm u) \bm I \Big) -\nabla \cdot\Big(\delta_{\bm \varepsilon}\energy_\mathrm{e}(\pf,\strain) + \delta_{\bm \varepsilon}\energy_\mathrm{f}(\pf,\strain,\theta) \Big) &= \bm f \label{eq:elasticitytheta}\\ 
\partial_t\theta + \nabla\cdot \bm q &= S_f \label{eq:flowtheta}\\
\kappa(\pf)^{-1} \bm q +
\nabla \delta_\theta\energy_\mathrm{f}(\pf,\strain,\theta) &= 0 \label{eq:darcyflowtheta}
\end{align}
\label{eq:model}%
\end{subequations}
accompanied with the initial and boundary conditions
\begin{subequations}
\begin{align}
 \pf = \pf_0 \quad &\mathrm{on} \quad \Omega\times\{0\} \label{eq:initpf}\\
 \theta = \theta_0 \quad &\mathrm{on} \quad \Omega\times\{0\}\label{eq:initmu}\\
 \nabla \cdot \bm{u} = \varepsilon_{v,0}  \quad &\mathrm{on} \quad \Omega\times\{0\}\label{eq:initdivu}\\
 \nabla \pf\cdot\bm n = 0 \quad&\mathrm{on} \quad \partial\Omega\times[0,T]\label{eq:initgradpf}\\
 \nabla \mu\cdot\bm n = 0 \quad&\mathrm{on} \quad \partial\Omega\times[0,T]\label{eq:initgradmu}\\
 \bm u = 0 \quad &\mathrm{on} \quad \partial\Omega\times[0,T]\label{eq:initu}\\
 \bm q\cdot\bm n = 0 \quad &\mathrm{on} \quad \partial\Omega\times[0,T]\label{eq:initq},
\end{align}
\label{eq:init}%
\end{subequations}
where $\bm{n}$ denotes the outward unit normal on $\partial\Omega$.

\begin{remark}[Compatible initial displacement]\label{remark:initial-displacement}
In view of secondary consolidation being mainly introduced for technical reasons in the existence analysis, we restrict the initial conditions for the volumetric strain $\varepsilon_{v,0}$ to natural compatibility conditions, also relevant without active regularization, i.e., $\regularization=0$. For given $\theta_0$ and $\pf_0$, we define the corresponding natural initial displacement $\bm u_0$ satisfying the compatibility condition
\begin{align*}
    \bm{u}_0 := \underset{\bm{u} \ \mathrm{s.t.}\ \bm u|_{\partial \Omega} = \bm{0}}{\mathrm{arg\,min}}\, \mathcal{E}(\pf_0, \strain[u], \theta_0).
\end{align*}
omitting for now the notion of function spaces. Then, we assume $\varepsilon_{v,0} = \nabla \cdot \bm{u}_0$.
\end{remark}

\subsection{Functional analysis notation}
We apply standard notation from functional analysis. We will consider the space of square integrable functions $L^2(\Omega)$, square integrable functions with zero mean 
$$L^2_0(\Omega) := \left\{f\in L^2(\Omega) \Big| \ f^\Omega:= \frac{1}{|\Omega|}\int_\Omega f \; dx = 0\right\},$$ and the Sobolev spaces $H^1(\Omega)$ and $H_{\bm n}^2(\Omega) := \{f\in H^2(\Omega) | \ \nabla f \cdot \bm n = 0 \textrm{ on } \partial\Omega\}$. $\left(H_0^1(\Omega)\right)^d$ denotes the space of vector-valued $H^1(\Omega)$ functions with zero traces at the boundary and $H(\nabla\cdot,\Omega)$ is the space of vector-valued $L^2(\Omega)$ functions with divergence in $L^2(\Omega)$. Furthermore, we introduce the Hilbert space $H_0(\Div,\Omega) := \{\bm f \in H(\Div,\Omega) | \ \bm f\cdot\bm n = 0 \textrm{ on } \partial\Omega\}$. For ease of notation, we define the later occurring spaces for vector functions of dimension $d$ as $\vecSpL := \left(L^2(\Omega)\right)^d$ and $\vecSpHTr := \left(H_0^1(\Omega)\right)^d$, and additionally the space of tensor-valued $L^2(\Omega)$ functions $\tensSpL := \left(L^2(\Omega)\right)^{d \times d}$. Each of the spaces are equipped with norms $\| \cdot \|_X$, and inner products $( \cdot,\cdot )_X$, and if the subscripts are omitted we refer to the respective $L^2(\Omega)$ versions. We also introduce the dual space of $X$ as $X'$ equipped with the dual norm $\|\cdot \|_{X'}$, and the duality pairing $\langle\cdot,\cdot\rangle_{X',X}$. For the duality pairings with respect to $H^1(\Omega)',H^1(\Omega)$ and $\vecSpHM,\vecSpHTr$ we simply write $\langle \cdot,\cdot \rangle$. Moreover, we use the theory of Bochner spaces (e.g.~\cite{Boyer2013}, §5; \cite{TRoubi2013}, §1.5, §7.1). For a given Banach space $X$, we define
\begin{align*}
 &L^p([0,T];X):=\left\{f\!:[0,T] \rightarrow X \Big| \ f \textrm{ Bochner measurable, } \int_0^T \|f(t)\|_X^p\; dt < \infty\right\} \quad \textrm{for $1 \leq p < \infty$,} \\ 
 &L^\infty([0,T];X):=\left\{f\!:[0,T] \rightarrow X \Big| \ f \textrm{ Bochner measurable, } \underset{t\in[0,T]}{\mathrm{ess\, sup}}\,\|f(t)\|_X < \infty\right\} \quad \textrm{and} \\ 
 &H^1([0,T];X):=\left\{f \in L^2([0,T];X) \Big| \ \partial_t f \in L^2([0,T];X)\right\} \textrm{ (cf.~\cite{Evans2010}, §5.9.2).}
\end{align*}
In addition, we will frequently make use of classical inequalities such as the Cauchy-Schwarz, Hölder's, Young's and Minkowski's inequality (see e.g.~\cite{Hardy1988}). For the Poincar\'e-Wirtinger inequality we refer to Proposition III.2.39 of \cite{Boyer2013}, for Korn's inequality we refer to Theorem 1.33 of \cite{TRoubi2013} and we use the following adapted form of Grönwall's inequality (\cite{Teschl2012}, Lemma 2.7) with the extra term $v(t)$ on the left-hand side:
\begin{lemma}[Grönwall] \label{lemma:Grönwall}
Let $a$, $b$, $u$ and $v$ be real-valued functions defined on the interval $[0,T]$. Assume that $a$ is integrable and that $b$, $u$ and $v$ are continuous and non-negative. If $u$ and $v$ satisfy the integral inequality
\begin{align*}
 u(t) + v(t) \leq a(t) + \int_0^t b(s)u(s) \;ds \quad \textrm{for all } t \in [0,T],
\end{align*}
then
\begin{align*}
 u(t) + v(t) \leq a(t) + \int_0^t a(s)b(s)\exp\!{\left(\int_s^t b(r) \;dr\right)} \;ds \quad \textrm{for all } t \in [0,T],
\end{align*}
and for constant $b > 0$ and non-decreasing $a$
\begin{align*}
 u(t) + v(t) \leq a(t) e^{b t} \quad \textrm{for all } t \in [0,T]
\end{align*}
holds.
\end{lemma}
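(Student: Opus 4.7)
The plan is to bootstrap the classical integral form of Grönwall's inequality, exploiting that $v(t) \geq 0$ makes the extra left-hand-side term harmless when weakening the hypothesis, while still enabling a sharper conclusion once a bound on $u$ alone is in hand.

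First I would discard $v(t) \geq 0$ on the left of the assumed inequality to obtain
$$u(t) \leq a(t) + \int_0^t b(s)u(s)\,ds, \quad t \in [0,T],$$
and apply the classical integral Grönwall inequality (Lemma 2.7 of \cite{Teschl2012}) to conclude
$$u(t) \leq a(t) + \int_0^t a(s)b(s)\exp\!\left(\int_s^t b(r)\,dr\right)ds.$$
Next I would feed this bound on $u$ back into the original hypothesis, which still controls $u+v$. The resulting double integral $\int_0^t b(s)\int_0^s a(r)b(r)\exp(\int_r^s b(\tau)\,d\tau)\,dr\,ds$ is rearranged by Fubini, and the identity $\frac{d}{ds}\exp(\int_r^s b(\tau)\,d\tau) = b(s)\exp(\int_r^s b(\tau)\,d\tau)$ collapses the inner integral to $\exp(\int_r^t b(\tau)\,d\tau) - 1$. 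After cancellation with the stray $\int_0^t a(s)b(s)\,ds$ term, the first stated conclusion for $u+v$ follows.

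For the simplified version with constant $b>0$ and non-decreasing $a$, I would use monotonicity of $a$ to bound $a(s) \leq a(t)$ inside the integral, pull $a(t)$ out, and compute $\int_0^t b\,e^{b(t-s)}\,ds = e^{bt}-1$. Combined with the free $a(t)$ term this produces the claimed bound $a(t)e^{bt}$.

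The argument is essentially routine, and the only subtle point — the closest thing to a genuine obstacle — is recognizing that one cannot apply Grönwall directly to $u+v$ as a single unknown, since $v$ does not appear under the integral on the right. The non-negativity of $v$ is therefore used twice: once to weaken the hypothesis into standard Grönwall form, and once implicitly when reinserting the resulting bound on $u$ to upgrade the estimate from $u$ to $u+v$.
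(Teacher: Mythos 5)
Your proof is correct, and it is exactly the derivation the paper leaves implicit: the paper states this lemma without proof, citing the classical integral Gr\"onwall inequality (Teschl, Lemma 2.7), and your bootstrap --- drop $v\geq 0$ to get the standard form, apply the classical lemma to $u$, reinsert the bound into the original hypothesis, and collapse the double integral via Fubini and the identity $\tfrac{d}{ds}\exp\bigl(\int_r^s b\bigr)=b(s)\exp\bigl(\int_r^s b\bigr)$ --- is precisely how the stated variant follows from that reference. One tiny quibble: in the reinsertion step the inequality is preserved by $b\geq 0$ (so that the upper bound on $u$ may be substituted under the integral), not by a second use of $v\geq 0$; this does not affect the validity of the argument.
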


\subsection{Variational equations}
\begin{definition}[Weak solution] \label{def:Weak solution}
We say that the quintuple $(\pf, \mu, \bm u, \theta, \bm q)$ is a weak solution to the Cahn-Hilliard-Biot system if it holds that
\begin{align*}
\pf\in L^\infty\big([0,T];H^1(\Omega)\big)\cap H^1\big([0,T];H^1(\Omega)'\big)&, \quad \mu\in L^2\big([0,T];H^1(\Omega)\big), \\
\bm u\in L^\infty\big([0,T];\vecSpHTr\big)&, \quad \partial_t \nabla \cdot \bm u \in L^2\big([0,T];L^2(\Omega)\big), \\
\theta\in L^\infty\big([0,T];L^2(\Omega)\big)\cap H^1\big([0,T];H^1(\Omega)'\big)&, \quad \bm q\in L^2\big([0,T];\vecSpL\big)
\end{align*}
with $\pf(0) = \pf_0 \in H^1(\Omega)$, $\nabla \cdot \bm{u}(0) = \varepsilon_{v,0} \in L^2(\Omega)$, complying with Remark~\ref{remark:initial-displacement}, and for $\theta_0 \in L^2(\Omega)$  
\begin{align*}
 \forall g \in H^1(\Omega)\!: \quad \left\langle \theta(0),g \right\rangle = \left\langle \theta_0,g \right\rangle
\end{align*}
such that
\begin{subequations}
\begin{align}
 \int_0^T \left\langle \partial_t \pf, \eta^\mathrm{ch} \right\rangle + \big( m(\pf)\nabla\mu, \nabla\eta^\mathrm{ch} \big)\;dt &= \int_0^T\big( R, \eta^\mathrm{ch} \big)\;dt \label{eq:ch1weak}\\
 \int_0^T\big(\mu,\eta^\mathrm{ch}\big) - \big( \delta_\varphi\mathcal{E}_\mathrm{i}(\varphi),\eta^\mathrm{ch} \big) - \big(\delta_\varphi\mathcal{E}_\mathrm{e}(\varphi, \strain),\eta^\mathrm{ch}\big) - \big(\delta_\varphi \mathcal{E}_\mathrm{f}(\varphi, \strain, \theta),\eta^\mathrm{ch}\big) \; dt &= 0
 \label{eq:ch2weak}\\
 \int_0^T \regularization\big( \partial_t \nabla \cdot \bm u, \nabla \cdot \bm \eta^{\bm u}\big) +  \big(\delta_{\bm \varepsilon}\mathcal{E}_\mathrm{e}(\varphi, \strain),\bm \varepsilon(\bm \eta^{\bm u})\big) + \big(\delta_{\bm \varepsilon}\mathcal{E}_\mathrm{f}(\varphi, \strain, \theta),\bm \varepsilon(\bm \eta^{\bm u})\big)  \; dt &= \int_0^T \left\langle \bm f,\bm \eta^{\bm u} \right\rangle \; dt \label{eq:elasticityweak}\\
 \int_0^T\left\langle\partial_t\theta,\eta^{\theta}\right\rangle - \big( \bm q, \nabla\eta^{\theta}\big) \; dt &= \int_0^T\big( S_f,\eta^{\theta}\big)\; dt
 \label{eq:flowweak}\\
 \int_0^T \big( \kappa(\pf)^{-1} \bm q, \bm \eta^{\bm q} \big) - \big( \delta_\theta\energy_\mathrm{f}(\pf,\strain,\theta),\nabla \cdot \bm \eta^{\bm q} \big)\; dt &= 0 \label{eq:darcyflowweak} 
\end{align}
\label{eq:weak}%
\end{subequations}
for all $\eta^\mathrm{ch}\in L^2\big([0,T];H^1(\Omega)\big)$, $\bm \eta^{\bm u}\in L^2\big([0,T];\vecSpHTr\big)$, $\eta^\theta\in L^2\big([0,T];H^1(\Omega)\big)$, $\bm\eta^{\bm q} \in L^2\big([0,T];H(\nabla\cdot,\Omega)\big)$.
\end{definition}

\begin{remark}\label{remark:hdiv-flux}
    We highlight the weakened formulation of the original $\Div \bm{q}$ term in the mass balance equation~\eqref{eq:flowtheta} and the corresponding weak variant~\eqref{eq:flowweak}, related through integration by parts. Thus, local mass conservation is only enforced weakly. Additionally, the continuous weak formulation of the continuous system does not integrate a natural saddle-point structure in the fluid flow equations. Weak formulations of mass balance equations are commonly used in the literature~\cite{Feireisl2021}. Note, however, that the subsequent analysis also will employ spatial discretizations based on the natural saddle-point structure for simpler manipulation of the equations, as well as common practical discretization approaches using mixed methods.
\end{remark}

\section{Existence of solution to the variational system\label{sec:existence}}
Here, we state and prove the first main result, the existence of a weak solution to the variational system~\eqref{eq:weak} under a set of assumptions.
\subsection{Statement of the main existence result}
\begin{theorem}[Existence of weak solutions]\label{thm:existence}
Suppose that the following assumptions (A1)--(A7) hold:
\begin{itemize}
 \item[(A1)] The double-well potential $\Psi(\pf)$ is non-negative\revision{, i.e., $\Psi(\pf) \geq 0$ for all $\pf \in \mathbb{R}$,} and twice continuously differentiable with $\Psi'(0) = 0$. Furthermore, we assume that there exist positive constants $c_\Psi$ and $C_\Psi$ such that \revision{for all $\pf \in L^2(\Omega)$ with $\Psi(\pf) \in L^1(\Omega)$}
 \begin{equation}
  \|\Psi'(\pf)\|_{L^1(\Omega)}\leq C_\Psi\left(\|\Psi(\pf)\|_{L^1(\Omega)} + \|\pf\|^2_{L^2(\Omega)}\right) 
 \label{eq:boundpsiprime}
 \end{equation}
 and for all $s \in \mathbb{R}$
 \begin{equation}
  \Psi''(s) + c_\Psi \geq 0.
 \label{eq:boundpsidoubleprime}
 \end{equation}
 Note that this holds true for the classical choice of a double-well potential $\Psi(\pf) = (1-\pf^2)^2$ with constants $C_\Psi = 2$ and $c_\Psi\geq 4$.
 \item[(A2)] The Biot modulus $M$ (with integrated fluid compressibility) and Biot-Willis coupling coefficient $\alpha$ are state-independent and given by positive constants. The mobility $m$ and permeability $\kappa$ as functions of $\pf$ are continuous and uniformly bounded such that there exist suitable constants $c_m,\ C_m,\ c_\kappa,\ C_\kappa$ satisfying for all $\pf \in \mathbb{R}$
\begin{alignat*}{8}
0 &<& \hspace*{0.1cm} c_m &\leq& \hspace*{0.1cm} m(\pf) &\leq& \hspace*{0.1cm} C_m &<& \hspace*{0.1cm} \infty, \\ 
0 &<& c_\kappa &\leq& \kappa(\pf) &\leq& C_\kappa &<& \infty.
\end{alignat*}

\item[(A3)] The elastic stiffness tensor $\mathbb{C}(\pf)$ is Lipschitz continuous with Lipschitz constant $L_\mathbb{C}$ and differentiable. We further assume that there exist positive constants $c_{\mathbb{C}}$ and $C_{\mathbb{C}}$ such that
\begin{equation}
 c_{\mathbb{C}}|\bm \varepsilon|^2 \leq \bm \varepsilon\!:\!\mathbb{C}(\pf) \bm \varepsilon \leq C_{\mathbb{C}}|\bm \varepsilon|^2
\label{eq:elastictensorbound}
\end{equation}
holds for all $\pf \in \mathbb{R}$ and $\bm \varepsilon \in \mathbb{R}^{d\times d}_\mathrm{sym}$ and that the symmetry conditions on $\mathbb{C}$ known from linear elasticity, cf., e.g.,~\cite{Coussy2004}, are fulfilled.
\item[(A4)]
The eigenstrain ({\it stress-free strain} or {\it intrinsic strain}) $\mathcal{T}(\pf)$ is bounded by the $L^2(\Omega)$ norm of $\pf$ such that \revision{for all $\pf \in L^2(\Omega)$}
\begin{align*}
\|\mathcal{T}(\pf)\|_{\tensSpL} \leq C_{\mathcal{T}} \|\pf\|_{L^2(\Omega)}
\end{align*}
for a positive constant $C_\mathcal{T}$, Lipschitz continuous with Lipschitz constant $L_\mathcal{T}$, differentiable and symmetric.
Note that this holds true for the linear ansatz of Vegard's law \cite{Garcke2021}, that is $\mathcal{T}(\pf) = \bm{\hat \varepsilon} \pf$ with constant symmetric tensor $\bm{\hat \varepsilon}$, and thus also for the choice $\mathcal{T}(\pf) = \xi \pf \bm I$ \cite{Areias2016} with (positive) constant $\xi$.
\item[(A5)] The source terms are (for simplicity) assumed to be autonomous and satisfy $R, S_f\in L^2(\Omega)$ and $\bm f \in \vecSpHM$, with $\|R\|_{L^2(\Omega)} = C_R$, $\|S_f\|_{L^2(\Omega)} = C_S$ and $\|\bm f\|_{\vecSpHM} = C_{\bm f}$. 
\item[(A6)]The initial data satisfies $\pf_0\in H^1(\Omega)$, $\Psi(\pf_0)\in L^1(\Omega)$, $\varepsilon_{v,0} \in L^2(\Omega)$, $\theta_0\in L^2(\Omega)$, where $\varepsilon_{v,0}$ complies with $\pf_0$ and $\theta_0$ in the sense of Remark~\ref{remark:initial-displacement}.

\item[(A7)] The regularization parameter is positive $\eta > 0$.
\end{itemize}
Then, there exists a weak solution quintuple $(\pf, \mu, \bm u, \theta, \bm q)$ to \eqref{eq:model} with initial and boundary data as in \eqref{eq:init} in the sense of Definition~\ref{def:Weak solution}. Moreover, the energy estimate
 \begin{equation}
 \begin{aligned}
 \underset{t \in [0,T]}{\mathrm{ess\, sup}} \Big(\|\Psi&(\pf(t))\|_{L^1(\Omega)} + \|\pf(t)\|_{H^1(\Omega)}^2 + \|\bm u(t)\|_{\vecSpH}^2 + \|\theta(t)\|_{L^2(\Omega)}^2\Big) + \regularization \|\partial_t \nabla \cdot \bm u\|^2_{L^2([0,T];L^2(\Omega))} \\
 &+ \|\pf\|_{H^1\left([0,T];H^1(\Omega)'\right)}^2 + \|\theta\|_{H^1\left([0,T];H^1(\Omega)'\right)}^2 + \|\mu\|_{L^2([0,T];H^1(\Omega))}^2 + \|\bm q\|_{L^2([0,T];\vecSpL)}^2 \leq C
\end{aligned}
\label{eq:boundenergy}
\end{equation}
holds with constant $C$ merely depending on the data and material constants.
\end{theorem}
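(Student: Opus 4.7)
The strategy is the classical Galerkin-plus-compactness argument, organized around the gradient-flow structure that couples all five equations of \eqref{eq:weak} into a single energy-dissipation identity. Regularization via $\regularization > 0$ enters only at the stage where strong convergence of the displacement is needed to identify nonlinear products in the limit.

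\emph{Step 1: Spatial discretization and local existence.} I would introduce countable bases of $H^1(\Omega)$ for $\pf_n,\mu_n$ (e.g., Neumann eigenfunctions of $-\Delta$), of $\vecSpHTr$ for $\bm u_n$, and a compatible finite-dimensional mixed pair $V_n^\theta \subset L^2(\Omega)$, $\bm V_n^{\bm q} \subset H_0(\Div,\Omega)$ for the flow subproblem (so that $\Div \bm V_n^{\bm q} \subset V_n^\theta$, as in Remark~\ref{remark:hdiv-flux}). Projecting \eqref{eq:weak} and eliminating $\mu_n$, $\bm q_n$, and the elastic contribution via the corresponding algebraic relations --- permissible thanks to the coercivity of the associated bilinear forms, which follows from (A2)--(A3) and Korn's inequality --- reduces the problem to a nonlinear ODE system for the coefficient vectors of $(\pf_n,\bm u_n,\theta_n)$. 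Continuity of the right-hand side in the unknowns is inherited from the continuity and boundedness assumptions (A1)--(A4), so the Peano-Cauchy theorem delivers a local-in-time solution.

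\emph{Step 2: Energy-dissipation identity and a priori bounds.} Testing \eqref{eq:ch1weak} against $\mu_n$, \eqref{eq:ch2weak} against $\partial_t \pf_n$, \eqref{eq:elasticityweak} against $\partial_t \bm u_n$, \eqref{eq:flowweak} against $p_n := M(\theta_n - \alpha \Div \bm u_n)$, and \eqref{eq:darcyflowweak} against $\bm q_n$, summing, and applying the chain rule to $\energy$ yields
\begin{equation*}
\frac{d}{dt}\energy(\pf_n,\strain[u_n],\theta_n) + \regularization\,\|\partial_t \Div \bm u_n\|^2 + \bigl(m(\pf_n)\nabla\mu_n,\nabla\mu_n\bigr) + \bigl(\kappa(\pf_n)^{-1}\bm q_n,\bm q_n\bigr) = (R,\mu_n) + (S_f,p_n) + \langle \bm f,\partial_t \bm u_n\rangle.
\end{equation*}
By (A1)--(A4), Korn's inequality, and Poincaré-Wirtinger, $\energy$ controls from below $\|\Psi(\pf_n)\|_{L^1(\Omega)} + \|\pf_n\|_{H^1(\Omega)}^2 + \|\bm u_n\|_{\vecSpH}^2 + \|\theta_n\|^2$, up to a linear term in $\|\pf_n\|^2$ absorbed using \eqref{eq:boundpsidoubleprime}. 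The source terms on the right are controlled via Cauchy-Schwarz and Young's inequality, using (A5) and crucially the positivity of $\regularization$ in (A7) to absorb $\langle \bm f,\partial_t \bm u_n\rangle$ into the dissipation. The adapted Grönwall Lemma~\ref{lemma:Grönwall} then yields the discrete analogue of~\eqref{eq:boundenergy}, which in particular rules out blow-up and yields global-in-time existence. The $L^2([0,T];H^1(\Omega)')$-bounds on $\partial_t\pf_n$ and $\partial_t\theta_n$ follow by reading off \eqref{eq:ch1weak} and \eqref{eq:flowweak}, using the previously derived bounds on $m(\pf_n)\nabla\mu_n$ and $\bm q_n$.

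\emph{Step 3: Compactness and passage to the limit.} The uniform bounds provide weakly-$*$ convergent subsequences in all spaces appearing in Definition~\ref{def:Weak solution}. Using that $H^1(\Omega)$ embeds compactly in $L^q(\Omega)$ for $q < 2^*$, which in turn embeds continuously in $H^1(\Omega)'$, the Aubin-Lions-Simon lemma promotes $\pf_n \to \pf$ to strong convergence in $C([0,T];L^q(\Omega))$ and, after extraction, a.e.\ in $\Omega\times[0,T]$; likewise $\theta_n \to \theta$ strongly in $L^2([0,T];L^q(\Omega))$. The regularization provides $\partial_t \Div \bm u_n$ uniformly bounded in $L^2$, and thus $\Div \bm u_n \to \Div \bm u$ strongly in $C([0,T];L^2(\Omega))$. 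Combined with the continuity and boundedness of $m,\kappa,\mathbb{C},\mathcal{T}$ and the growth condition \eqref{eq:boundpsiprime} on $\Psi'$, these strong convergences suffice to identify every nonlinear term in \eqref{eq:weak} in the limit. The initial datum for $\pf$ is recovered via the continuity $\pf \in C([0,T];L^2(\Omega))$, and the initial datum for $\theta$ in the weak sense of Definition~\ref{def:Weak solution} by integrating \eqref{eq:flowweak} by parts in time against test functions vanishing at $t=T$.

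\emph{Main obstacle.} The decisive difficulty is converting weak convergence of $\bm u_n$ into enough control to identify the nonlinear stress $\mathbb{C}(\pf)\bigl(\strain[u]-\mathcal{T}(\pf)\bigr)$ and the fluid-energy variation appearing in \eqref{eq:ch2weak}. Without secondary consolidation ($\regularization = 0$) no time-derivative bound on $\bm u$ is available, and the products of $\pf_n$-dependent coefficients with $\strain[u_n]$ or $\Div \bm u_n$ cannot be passed to the limit; the strict positivity of $\regularization$ in (A7) is precisely what produces strong $L^2$-convergence of $\Div \bm u_n$ and thereby rescues the argument. This is why secondary consolidation, although physically mild, is built into the existence analysis here.
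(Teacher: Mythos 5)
Your overall architecture (Galerkin in space, Peano--Cauchy, energy--dissipation identity, Gr\"onwall, Aubin--Lions, limit passage) matches the paper's, but the compactness step --- which is the actual crux of the theorem and the sole reason assumption (A7) is imposed --- contains a genuine gap. You claim that the uniform bound on $\partial_t \Div \bm u_n$ in $L^2([0,T];L^2(\Omega))$ yields $\Div \bm u_n \to \Div \bm u$ strongly in $C([0,T];L^2(\Omega))$. It does not: $H^1([0,T];L^2(\Omega))$ gives equicontinuity in time with values in $L^2(\Omega)$, but bounded sets of $L^2(\Omega)$ are not precompact in $L^2(\Omega)$, so no Aubin--Lions--type triple is available from this bound alone. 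Moreover, even granting that convergence, it would not suffice: the term $\tfrac12(\strain[u_n]-\mathcal{T}(\pf_n))\!:\!\mathbb{C}'(\pf_n)(\strain[u_n]-\mathcal{T}(\pf_n))$ in \eqref{eq:ch2weak} is quadratic in the \emph{full} strain tensor, so one needs strong convergence of $\strain[u_n]$ in $L^2([0,T];\tensSpL)$, which weak-$*$ convergence of $\bm u_n$ in $L^\infty([0,T];\vecSpHTr)$ does not provide. (A similar objection applies to your claimed strong convergence of $\theta_n$ in $L^2([0,T];L^q(\Omega))$: the only available spatial bound is in $L^2(\Omega)$, so Aubin--Lions only delivers strong convergence in $C([0,T];H^1(\Omega)')$.) The paper closes this gap in two stages: first, it shows strong $L^2$-convergence of the effective pressure $p_k=M(\theta_k-\alpha\Div\bm u_k)$ by combining the time-regularity in $H^1([0,T];H^1(\Omega)')$ of its projection $\pi_k=\Pi_k^\theta p_k$ (this is where (A7) and the constancy of $M,\alpha$ from (A2) enter) with a \emph{spatial} $H^1(\Omega)$ bound on $\pi_k$ extracted from the discrete Darcy law and the special structure of the flux basis, so that Aubin--Lions applies with the triple $H^1(\Omega)\hookrightarrow\hookrightarrow L^2(\Omega)\hookrightarrow H^1(\Omega)'$; second, it runs a Minty-type monotonicity argument on the elasticity equation, testing with $\bm u_k-\bm v_k$ for a strong recovery sequence $\bm v_k$ and using the coercivity of $\mathbb{C}$ together with the strong convergences of $\pf_k$ and $p_k$, to conclude $\strain[u_k]\to\strain$ strongly in $L^2([0,T];\tensSpL)$. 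Your proposal needs both ingredients; neither is a routine consequence of the bounds you list.

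A secondary flaw sits in your a priori estimate: you propose to absorb $\langle\bm f,\partial_t\bm u_n\rangle$ into the viscous dissipation $\regularization\|\partial_t\Div\bm u_n\|_{L^2(\Omega)}^2$. For a general $\bm f\in\vecSpHM$ this fails, since the dissipation controls only the volumetric part of the rate of strain while $\langle\bm f,\partial_t\bm u_n\rangle$ requires control of $\|\partial_t\bm u_n\|_{\vecSpHTr}$. The paper instead exploits that $\bm f$ is autonomous, writes $\langle\bm f,\partial_t\bm u_n\rangle=\tfrac{d}{dt}\langle\bm f,\bm u_n\rangle$, and absorbs $\langle\bm f,\bm u_n\rangle$ into the energy via Young's and Korn's inequalities; consistently with this, the paper emphasizes that the a priori estimates hold even for $\regularization=0$. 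Likewise, the control of $\|\pf_n\|_{L^2(\Omega)}^2$ (needed to compensate the negative term in the lower bound of the elastic energy) does not come from \eqref{eq:boundpsidoubleprime}, which gives a bound in the wrong direction; the paper obtains it by adding $K$ times the phase-field equation tested with $\pf_k$ and choosing $K$ appropriately.
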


\subsection{Structure of the proof}
The proof of existence follows an established procedure, which also has been applied to analyze other tumor models, see e.g.~\cite{Garcke2021, Garcke2016, Lowengrub2013, Frigeri2015, FritzDarcy2019, FritzECM2019}:
\begin{enumerate}
\item We apply a Galerkin-type discretization in space to get a finite dimensional system of nonlinear time-dependent equations, cf.\ Section~\ref{sec:disc-in-space}.
\item We rephrase the system to a system of nonlinear ordinary differential equations, cf.\ Section~\ref{sec:local-in-time-solution}. 
\item We apply the Peano-Cauchy existence theorem to get a local in time solution, cf.\ Section~\ref{sec:local-in-time-solution}.
\item We derive {\it a priori} estimates in order to extend the existence interval to $[0,T]$, cf.\ Section~\ref{sec:a-priori-estimates}.
\item We use some standard compactness arguments to move towards the limit in space, cf.\ Section~\ref{sec:passingtolimit}.
\item We show that the derived limit is a weak solution to the Cahn-Hilliard-Biot model in the sense of Definition~\ref{def:Weak solution}, cf.\ Section~\ref{sec:passingtolimit}.
\end{enumerate}

\begin{remark}[Regularity of initial data\label{rem:assumption_initial_pf}]
    Note that we first prove the theorem with $\pf_0 \in H_{\bm n}^2(\Omega)$. This is needed in Lemma~\ref{lemma:initial-data} to construct discrete approximations of the initial data with a finite (interface) energy, i.e., in particular sufficient regularity in terms of the double-well potential.  In Section~\ref{sec:passingtolimit}, we complete the proof for the general condition $\pf_0 \in H^1(\Omega)$ with $\Psi(\pf_0) \in L^1(\Omega)$. The entire process follows an identical discussion as in~\cite{Garcke2021}.
\end{remark}

\revision{\begin{remark}[State-dependent material parameters]
The Cahn-Hilliard-Biot model enjoys a gradient flow structure also with the Biot modulus $M$ and Biot-Willis constant $\alpha$ stated as functions of $\pf$. The steps 1-4 of the proof of Theorem~\ref{thm:existence} can also be established for state-dependent, bounded, material parameters using a slightly altered discrete-in-space system (not utilizing the discrete energy from Section~\ref{sec:discrete_energy}). It does, however, get unnecessarily complicated to present both cases here, as the assumption of state independence is needed for the final steps of the proof.
\end{remark}}

\begin{remark}[Secondary consolidation]
Kelvin-Voigt type viscoelasticity has two effects. It models secondary consolidation, and it naturally adds more regularity to the displacement variable. The latter is essential in step 5 of our proof - the first four steps also hold for omitted regularization. The additional regularity in time on the volumetric strain, accompanied by the volume content will allow to deduce sufficient regularity for the pressure, under assumption (A2). Overall, this will be sufficient to discuss limits of nonlinear terms.
\end{remark}

\begin{remark}[On assumptions and Lemmas]
    The presentation of the proof is structured in Lemmas, and although it is not explicitly stated in each Lemma, all assumptions of Theorem~\ref{thm:existence} are assumed to hold throughout Section~\ref{sec:existence}.
\end{remark}

\subsection{Discrete-in-space system} \label{sec:disc-in-space}
Here, we present the spatial discretization of the system using the Faedo-Galerkin method \cite{Evans2010, TRoubi2013}. We define the following finite dimensional approximation spaces:
\begin{itemize}
 \item Let $\mathcal{V}_k^\mathrm{ch} = \Span\{\eta_i^\mathrm{ch}\}_{i=1}^k \subseteq H^1(\Omega)$, where $\left\{\eta_i^\mathrm{ch}\right\}_{i=2}^k \subseteq H^2_{\bm n}(\Omega)\cap L_0^2(\Omega)$ is the span of the first $k-1$ eigenfunctions of the Neumann-Laplace operator, defined as the operator that maps functions $f\in L^2_0(\Omega)$ to solutions $z\in L^2_0(\Omega)$ of the system
\begin{equation}
 \left(\nabla z,\nabla v\right) = \left(f, v\right)
\end{equation}
for all $v\in H^1(\Omega)$, and $\eta_1^\mathrm{ch} = \frac{1}{\left|\Omega\right|^\frac{1}{2}}$. Note that by construction the resulting basis is orthonormal in $L^2(\Omega)$ and orthogonal in $H^1(\Omega)$.
 \item Let $\mathcal{V}_k^{\bm u} = \Span\{\bm \eta_i^{\bm u}\}_{i=1}^k \subseteq \vecSpHTr$ be the span of the first $k$ eigenfunctions of the Dirichlet-Laplace operator that maps functions $\bm f\in \bm H^1_0(\Omega)$ to solutions $\bm z\in {\bm H}^1_0(\Omega)$
\begin{equation*}
\left( \nabla \bm z, \nabla \bm v\right) = \left(\bm f, \bm v\right)
\end{equation*}
for all $\bm v\in \bm H^1_0(\Omega)$.
See also (\cite{Lepedev2013}, Theorem 3.12.1) for further details.
 \item Let $\mathcal{V}_k^{\theta} = \Span\{\eta_i^{\theta}\}_{i=1}^k = \mathcal{V}_k^\mathrm{ch} \subseteq H^1(\Omega)$\revision{, where $\eta_i^{\theta} = \eta_i^\mathrm{ch}$ for all $i \in \{1,\ldots, k\}$}.
 \revision{Let $\Pi_k^\theta$ define the canonical $L^2$-projection onto $\mathcal{V}_k^\theta$.}
 \item We define $\mathcal{V}_k^{\bm q} = \Span\{\bm \eta_i^{\bm q}\}_{i=1}^k\subseteq H_0(\nabla\cdot,\Omega)$ as span of the unique solutions of the Poisson equation in mixed formulation~\cite{Boffi2013}: Find $(\bm\eta_i^{\bm q},\tilde\eta_i^{\theta}) \in H_0(\Div,\Omega) \times L_0^2(\Omega)$ such that 
 \begin{equation}
 \begin{aligned}
  \big( \bm\eta_i^{\bm q}, \bm q \big) - \big(\tilde\eta_i^{\theta},  \Div\bm q \big) &= 0 \\
  \big( \Div \bm \eta_i^{\bm q},\theta  \big) &= \big( \eta_i^\theta,\theta \big)
 \label{eq:mixedSpacesDarcy}
 \end{aligned}
 \end{equation}
 holds for all $(\bm{q}, \theta) \in H_0(\Div,\Omega) \times L_0^2(\Omega)$ 
 and $i \in \{1,\ldots,k\}$.
\end{itemize}

\begin{lemma}[Characterization of $\mathcal{V}_k^{\bm q}$\label{lemma:flux-basis}]
 Let $0=\lambda_1 < \lambda_2 \leq  \dots \leq \lambda_k$ denote the $k$ eigenvalues corresponding to $\{ \eta_i^{\theta} \}_{i=1}^k$ being the basis of $\mathcal{V}_k^{\theta}$, then the basis functions of $\mathcal{V}_k^{\bm q}$ satisfy $\lambda_i\bm{\eta}_i^{\bm{q}} = -\nabla  \eta_i^{\theta}$, $i=1,...,k$.
\end{lemma}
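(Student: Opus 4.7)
The plan is to verify directly, using the structure of the Neumann--Laplace eigenvalue problem, that the candidate $\bm v_i := -\lambda_i^{-1}\nabla \eta_i^\theta$ (for $i\ge 2$) solves the mixed system~\eqref{eq:mixedSpacesDarcy}, and then invoke uniqueness to identify it with $\bm{\eta}_i^{\bm q}$. For $i=1$ a separate argument is needed since $\lambda_1=0$.

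First I would treat the case $i \geq 2$. By construction, $\eta_i^\theta \in H_{\bm n}^2(\Omega)\cap L_0^2(\Omega)$ satisfies $-\Delta \eta_i^\theta = \lambda_i \eta_i^\theta$ with $\nabla \eta_i^\theta \cdot \bm n = 0$ on $\partial \Omega$. Consequently $\bm v_i \in H_0(\Div,\Omega)$ and $\Div \bm v_i = -\lambda_i^{-1}\Delta \eta_i^\theta = \eta_i^\theta$. Thus the second equation in~\eqref{eq:mixedSpacesDarcy} is satisfied by $\bm v_i$ paired with any $\tilde\eta_i \in L_0^2(\Omega)$. For the first equation, I set $\tilde\eta_i := \lambda_i^{-1}\eta_i^\theta \in L_0^2(\Omega)$ and integrate by parts, using $\bm q \cdot \bm n = 0$ on $\partial\Omega$ for $\bm q \in H_0(\Div,\Omega)$, to obtain
\begin{equation*}
    (\bm v_i, \bm q) = -\lambda_i^{-1}(\nabla \eta_i^\theta, \bm q) = \lambda_i^{-1}(\eta_i^\theta, \Div \bm q) = (\tilde\eta_i, \Div \bm q).
\end{equation*}
Hence $(\bm v_i, \tilde\eta_i)$ solves~\eqref{eq:mixedSpacesDarcy}. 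Since the mixed Poisson problem is well-posed on $H_0(\Div,\Omega)\times L_0^2(\Omega)$ (cf.~\cite{Boffi2013}), its solution is unique, forcing $\bm{\eta}_i^{\bm q} = \bm v_i$ and thus $\lambda_i \bm{\eta}_i^{\bm q} = -\nabla \eta_i^\theta$.

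For the remaining case $i = 1$, recall that $\eta_1^\theta = |\Omega|^{-1/2}$ is constant, so $\nabla \eta_1^\theta = \bm 0$ and the identity $\lambda_1 \bm{\eta}_1^{\bm q} = -\nabla \eta_1^\theta$ reduces to $\bm 0 = \bm 0$. To see this is consistent with~\eqref{eq:mixedSpacesDarcy}, note that $(\eta_1^\theta, \theta) = 0$ for every $\theta \in L_0^2(\Omega)$, so the right-hand side of the second equation vanishes. The pair $(\bm 0, 0)$ trivially solves~\eqref{eq:mixedSpacesDarcy}, and again uniqueness of the mixed formulation gives $\bm{\eta}_1^{\bm q} = \bm 0$, closing the argument.

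No substantial obstacle is anticipated; the proof is essentially a direct verification followed by an appeal to well-posedness of the mixed Poisson problem. The only minor care required is the $i=1$ case, where division by $\lambda_1 = 0$ is avoided by exploiting the constancy of $\eta_1^\theta$ and the zero-mean constraint built into the test space $L_0^2(\Omega)$.
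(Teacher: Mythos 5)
Your proposal is correct and follows essentially the same route as the paper: verify that the ansatz $\left(-\tfrac{1}{\lambda_i}\nabla\eta_i^\theta, \tfrac{1}{\lambda_i}\eta_i^\theta\right)$ solves the mixed system~\eqref{eq:mixedSpacesDarcy} using the Neumann eigenvalue relation and integration by parts, then conclude by uniqueness of the mixed Poisson problem. You merely spell out the $i=1$ case and the integration-by-parts step, which the paper dismisses as trivial.
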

\begin{proof}
The proof is trivial for $i=1$. Let $i\geq 2$ with $\lambda_i>0$. As $\eta_i^{\theta} \in H^2_{\bm{n}}(\Omega) \cap L_0^2(\Omega)$, the ansatz $(\bm{\eta}_i^{\bm{q}}, \tilde{\eta}_i^\theta):=\left(-\frac{1}{\lambda_i} \nabla \eta_i^{\theta}, \frac{1}{\lambda_i} \eta_i^{\theta}\right) \in H_0(\nabla\cdot,\Omega) \times L_0^2(\Omega)$ solves~\eqref{eq:mixedSpacesDarcy}. Uniqueness of solutions~\cite{Boffi2013} proves the assertion.
\end{proof}

\revision{
\subsubsection{Discrete free energy}\label{sec:discrete_energy}
We introduce a discrete free energy $\mathcal{E}_k$, almost identical to $\mathcal{E}$, cf.\ equation~\eqref{eq:totenergy}, but making use of a projection operator. Let $\mathcal{E}_k$ be defined as
\begin{equation} \label{eq:totenergy_k}
    \mathcal{E}_k(\varphi_k,\strain[u_k], \theta_k) := \mathcal{E}_\mathrm{i}(\varphi_k) + \mathcal{E}_\mathrm{e}(\varphi_k, \strain[u_k]) +\mathcal{E}_{\mathrm{f},k}(\strain[u_k], \theta_k),
\end{equation}
where the discrete fluid contribution utilizes the $L^2$-projection operator $\Pi_k^\theta$ 
\begin{equation} \label{eq:fluidenergy_k}
    \mathcal{E}_{\mathrm{f},k}(\strain[u_k], \theta_k) := \int_\Omega \frac{M}{2}\left( \Pi_k^\theta \left(\theta_k - \alpha \nabla\cdot \bm u_k\right) \right)^2\; dx.
\end{equation}
In addition, we define the shorthand notation for the discrete pore pressure
\begin{equation} \label{eq:porepressure_k}
    \pi_k = M \, \Pi_k^\theta \left(\theta_k - \alpha \nabla\cdot \bm u_k\right).
\end{equation}
This, including the independence of $\mathcal{E}_{\mathrm{f},k}$ on $\varphi_k$, results in
\begin{align*}
    \delta_\varphi \mathcal{E}_{\mathrm{f},k}(\strain[u_k], \theta_k) &= 0, \\ 
    \delta_{\bm \varepsilon}\energy_{\mathrm{f},k}(\strain[u_k],\theta_k) &= - \alpha \pi_k I, \\ 
    \delta_\theta\energy_{\mathrm{f},k}(\strain[u_k],\theta_k) &= \pi_k. 
\end{align*}
}

\revision{\subsubsection{The Galerkin discrete-in-space problem}}
The Galerkin discrete-in-space problem is defined \revision{analogously to~\eqref{eq:weak}, but based on the discrete free energy $\mathcal{E}_k$}: 
Find $(\pf_k, \mu_k, \bm u_k, \theta_k, \bm q_k)$ of the form
\begin{equation}
 \pf_k(t) = \sum_{i=1}^k a_i^k(t)\eta^\mathrm{ch}_i,\text{ } \mu_k(t) = \sum_{i=1}^k b_i^k(t)\eta^\mathrm{ch}_i,\text{ } \bm u_k(t) = \sum_{i=1}^k c_i^k(t)\bm \eta^{\bm u}_i,\text{ } \theta_k(t) = \sum_{i=1}^k d_i^k(t)\eta^\theta_i,\text{ }  \bm q_k(t) = \sum_{i=1}^k e_i^k(t)\bm \eta^{\bm q}_i
 \label{eq:discinspaceansatz}
\end{equation}
such that for a.e.~$t \in [0,T]$
\begin{subequations}
\begin{align}
\left\langle \partial_t \varphi_k, \eta_j^\mathrm{ch}\right\rangle + \big( m(\pf_k) \nabla \mu_k,\nabla \eta_j^\mathrm{ch}\big) &= \big( R,\eta_j^\mathrm{ch} \big),
\label{eq:ch1discspace}\\
\big(\mu_k ,\eta_j^\mathrm{ch}\big) - \big( \delta_\varphi\mathcal{E}_\mathrm{i}(\varphi_k),\eta_j^\mathrm{ch} \big) - \big(\delta_\varphi\mathcal{E}_\mathrm{e}(\varphi_k, \strain[u_k]),\eta_j^\mathrm{ch}\big) 
&=0
\label{eq:ch2discspace},\\
\regularization \big(\partial_t \nabla \cdot \bm u_k, \nabla \cdot \bm{\eta}_j^{\bm u} \big) + 
\big(\delta_{\bm \varepsilon}\mathcal{E}_\mathrm{e}(\varphi_k, \strain[u_k]),\bm \varepsilon(\bm \eta_j^{\bm u})\big) + \big( \delta_{\bm \varepsilon}\energy_{\mathrm{f},\revision{k}}(\strain[u_k],\theta_k),\bm \varepsilon(\bm \eta_j^{\bm u})\big) &= \left\langle \bm f,\bm \eta_j^{\bm u} \right\rangle 
\label{eq:elasticitydiscspace},\\
\left\langle\partial_t\theta_k,\eta_j^\theta\right\rangle + \big(\nabla\cdot \bm q_k, \eta_j^\theta\big) &= \big( S_f,\eta_j^\theta\big)
\label{eq:flowdiscspace},\\
\big( \kappa(\pf_k)^{-1}\bm q_k, \bm \eta_j^{\bm q} \big) - \big( \delta_\theta\energy_{\mathrm{f},\revision{k}}(\strain[u_k],\theta_k),\nabla \cdot \bm \eta_j^{\bm q} \big) &= 0,
\label{eq:darcyflowdiscspace}
\end{align}\label{eq:discspace}%
\end{subequations} 
holds for $j=1,\dots,k$. We further introduce the vectors of the coefficient functions $\bm a^k = \left[a_1^k, a_2^k, \dots, a_k^k\right]^\top$, $\bm b^k = \left[b_1^k,b_2^k, \dots, b_k^k\right]^\top$, $\bm c^k = \left[c_1^k, c_2^k, \dots, c_k^k\right]^\top$, $\bm d^k = \left[d_1^k, d_2^k, \dots, d_k^k\right]^\top$, and $\bm e^k = \left[e_1^k, e_2^k, \dots, e_k^k\right]^\top\in\mathbb{R}^k$.
As initial conditions for the discrete problem we impose $a_i^k(0) := \big(\pf_0,\eta^\mathrm{ch}_i\big)$, and $d_i^k(0):=\big(\theta_0, \eta^{\theta}_i\big)$ for $i=1,\ldots,k$, in other words $\pf_{k,0} = \Pi_k^\mathrm{ch} \pf_0$ and $\theta_{k,0} = \Pi_k^\theta \theta_{0}$ for all $k\in \mathbb{N}$ with the $L^2$-projections $\Pi_k^\mathrm{ch}$ and $\Pi_k^\theta$ mapping to $\mathcal{V}_k^\mathrm{ch}$ and $\mathcal{V}_k^\theta$ respectively. Similarly, for $c_i^k(0)$, implicitly described by 
\begin{align}
\label{eq:initial-displacement}
    \bm{u}_{k,0} := \underset{\bm{u}_k \in \mathcal{V}_k^{\bm u}}{\mathrm{arg\,min}}\, \mathcal{E}_{\revision{k}}(\pf_{k,0}, \strain[u_k], \theta_{k,0}),
\end{align}
we have that $\nabla \cdot \bm{u}_k(0) = \nabla \cdot \bm{u}_{k,0}$ by construction.

Notice that in the following, in \eqref{eq:ch2discspace}, we may use the substitution
\begin{align}
\big( \delta_\varphi\mathcal{E}_\mathrm{i}(\varphi_k),\eta_j^\mathrm{ch} \big) = \big( \frac{\gamma}{\ell} \Psi'(\pf_k) - \gamma\ell\Delta\pf_k, \eta_j^\mathrm{ch} \big) = \frac{\gamma}{\ell} \big( \Psi'(\varphi_k),\eta_j^\mathrm{ch}\big) + \gamma\ell\big(\nabla\varphi_k,\nabla\eta_j^\mathrm{ch}\big).
\label{eq:ch2discspacealter}
\end{align}

\subsection{Local-in-time solution for the discrete-in-space system} \label{sec:local-in-time-solution}
The above system can be understood as a system of ordinary differential equations in the time-dependent coefficient vectors $\bm a^k$, $\bm b^k$, $\bm c^k$, $\bm d^k$ and $\bm e^k$. 
\begin{lemma}
 There exists a $T_k \in (0,T]$ such that there is a local-in-time solution to \eqref{eq:discspace} defined on $[0,T_k]$ for all $k \in \mathbb{N}$. Furthermore, the coefficient functions are continuously differentiable, that is $\bm a^k$, $\bm b^k$, $\bm c^k$, $\bm d^k$, $\bm e^k \in C^1\big([0,T_k]; \mathbb{R}^k\big)$.
\end{lemma}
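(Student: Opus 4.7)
The plan is to recast the Galerkin system \eqref{eq:discspace} as a first-order ODE for the coefficient vector and invoke the Peano--Cauchy existence theorem. Two of the five blocks are algebraic in their lead unknown: \eqref{eq:ch2discspace} in $\bm b^k$ and \eqref{eq:darcyflowdiscspace} in $\bm e^k$. The strategy is to eliminate these in a preparatory step and derive an ODE system for $\bm y^k := (\bm a^k, \bm c^k, \bm d^k)^\top \in \mathbb{R}^{3k}$ with continuous right-hand side.

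First, I would eliminate $\bm b^k$ and $\bm e^k$. Since $\{\eta_i^{\mathrm{ch}}\}$ is $L^2$-orthonormal, \eqref{eq:ch2discspace} gives $b_i^k = \bigl(\delta_\pf\energy(\pf_k,\strain[u_k],\theta_k),\eta_i^{\mathrm{ch}}\bigr)$, with continuity in $\bm y^k$ following from (A1), (A3), (A4), using the substitution \eqref{eq:ch2discspacealter} to avoid requiring $\Delta\pf_k$ pointwise. For \eqref{eq:darcyflowdiscspace}, the matrix with entries $(\kappa(\pf_k)^{-1}\bm\eta_i^{\bm q},\bm\eta_j^{\bm q})$ is symmetric and uniformly positive definite by (A2), so $\bm e^k$ is obtained by linear inversion, continuous in $\bm y^k$ thanks to continuity of $\kappa$.

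Next I read off the dynamic part. The $L^2$-orthonormality of $\{\eta_i^{\mathrm{ch}}\}$ and $\{\eta_i^\theta\}$ turns \eqref{eq:ch1discspace} and \eqref{eq:flowdiscspace} directly into $\dot{\bm a}^k = F_\pf(\bm y^k)$ and $\dot{\bm d}^k = F_\theta(\bm y^k)$, after inserting the already-eliminated $\bm b^k$ and $\bm e^k$. The elasticity block \eqref{eq:elasticitydiscspace} is the delicate one: the matrix acting on $\dot{\bm c}^k$ is $\eta\,[(\nabla\cdot\bm\eta_i^{\bm u},\nabla\cdot\bm\eta_j^{\bm u})]_{ij}$. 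Generically for the Dirichlet--Laplace basis of $\mathcal{V}_k^{\bm u}$ this matrix is positive definite and direct inversion yields $\dot{\bm c}^k = F_{\bm u}(\bm y^k)$; in the presence of divergence-free eigenmodes the basis is orthogonally split into divergence-carrying and divergence-free parts, and on the divergence-carrying subspace $\dot{\bm c}^k$ is recovered by inversion on the reduced block, while on the divergence-free subspace \eqref{eq:elasticitydiscspace} reduces to an elliptic constraint that is uniquely solvable by coercivity of the elastic stiffness form (from (A3) and Korn's inequality), determining the remaining components of $\bm c^k$ continuously in terms of $\bm a^k$ and $\bm d^k$. Either way, $\dot{\bm c}^k$ ends up as a continuous function of $\bm y^k$ in a neighborhood of the initial data.

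Continuity of $F_\pf, F_{\bm u}, F_\theta$ in $\bm y^k$ then follows from (A1)--(A5), since on the finite-dimensional spaces $\mathcal{V}_k^{\mathrm{ch}}, \mathcal{V}_k^{\bm u}, \mathcal{V}_k^\theta, \mathcal{V}_k^{\bm q}$ all nonlinear operations factor through compositions of continuous functions with matrix-vector products, and the inverse of a continuous symmetric positive definite matrix-valued map is continuous. With the initial coefficients prescribed in Section~\ref{sec:disc-in-space}, the Peano--Cauchy theorem then supplies a maximal $T_k \in (0,T]$ and a continuous $\bm y^k$ on $[0,T_k]$ with continuous derivative, i.e.\ $\bm y^k \in C^1([0,T_k];\mathbb{R}^{3k})$. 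The $C^1$-regularity of $\bm b^k$ and $\bm e^k$ is inherited from their $C^1$-dependence on $\bm y^k$. The main obstacle is the careful inversion in the elasticity block when the divergence mass matrix fails to be strictly positive definite; everything else is routine under (A1)--(A6).
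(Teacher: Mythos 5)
Your proposal is correct and follows essentially the same route as the paper: eliminate $\bm b^k$ and $\bm e^k$ algebraically, split the displacement coefficients into divergence-carrying and divergence-free parts (the paper realizes this via a spectral decomposition of the semi-definite matrix $\regularization\,[(\nabla\cdot\bm\eta_i^{\bm u},\nabla\cdot\bm\eta_j^{\bm u})]_{ij}$), solve the resulting algebraic constraint on the kernel using positive definiteness of the combined elastic/fluid stiffness, and apply Peano--Cauchy to the reduced ODE. The only minor imprecision is that the divergence-free components of $\bm c^k$ are determined by the constraint in terms of $\bm a^k$, $\bm d^k$ \emph{and} the divergence-carrying components, since the stiffness form couples the two subspaces; this does not affect the validity of the argument.
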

\begin{proof}
The goal is to reduce the partially degenerate, algebraic ODE system corresponding to the discretization to a classical ODE system allowing for applying the Peano-Cauchy existence theorem (\cite{Coddington1955}, Theorem 1.2).
We start by eliminating $\bm b^k$ and $\bm e^k$ from the system of equations. At first, we define the matrices $(\bm A_{xy})_{ji} = \big(\nabla\eta_i^y,\nabla\eta_j^x \big)$, $(\bm M_{xy})_{ji} =\big(\eta_i^y,\eta_j^x\big) $, and $(\bm B_{xy})_{ji} =\big(\Div\eta_i^y,\eta_j^x\big)$, with $x$, $y$ being placeholders for the exponents of associated test functions. Inserting the ansatz \eqref{eq:discinspaceansatz} into equation \eqref{eq:ch1discspace}, and utilizing the $L^2(\Omega)$-orthonormal property of $\mathcal{V}_k^\mathrm{ch}$, we get for $j=1,...,k$
\begin{align}
 &\langle \partial_t \varphi_k, \eta_j^\mathrm{ch} \rangle = \Big \langle \partial_t \sum_{i=1}^k a_i^k \eta^\mathrm{ch}_i,\eta_j^\mathrm{ch} \Big\rangle = \sum_{i=1}^k \big( \eta_i^\mathrm{ch}, \eta_j^\mathrm{ch} \big) \partial_t a_i^k = \sum_{i=1}^k \delta_{ij} \partial_t a_i^k = \partial_t a_j^k
 \label{eq:helperprojectionofphi} \\
 &\big( m(\pf_k) \nabla \mu_k,\nabla \eta_j^\mathrm{ch}\big) = \Big( m(\pf_k(\bm a^k)) \nabla \sum_{i=1}^k b_i^k \eta^\mathrm{ch}_i,\nabla \eta^\mathrm{ch}_j \Big) = \sum_{i=1}^k \big( m(\pf_k(\bm a^k)) \nabla \eta^\mathrm{ch}_i,\nabla \eta^\mathrm{ch}_j \big) b_i^k, \nonumber
\end{align}
with $\delta_{ij}$ denoting the Kronecker delta. Thus, by defining $\left(\bm A_{m,\mathrm{chch}}(\bm a^k)\right)_{ji} = \big( m(\pf_k(\bm a^k)) \nabla \eta^\mathrm{ch}_i,\nabla \eta^\mathrm{ch}_j \big)$ and the source term vector $\bm{\hat{r}}_j = \big( R,\eta^\mathrm{ch}_j \big)$, we obtain the ordinary differential equation
\begin{equation}\label{eq:ch1matrix}
\partial_t\bm a^k + \bm A_{m,\mathrm{chch}}(\bm a^k) \bm b^k  = \bm{\hat{r}}.
\end{equation}
For the other equations we proceed in a similar way. We can rewrite equation \eqref{eq:ch2discspace} with the substitution \eqref{eq:ch2discspacealter} as
\begin{equation*} 
 \bm b^k - \gamma\ell \bm A_{\mathrm{chch}}\bm a^k - \frac{\gamma}{\ell} \bm \psi'(\bm a^k) - \bm \energy^{\delta_\pf}_{\mathrm{e}}(\bm a^k,\bm c^k) = 0,
\end{equation*}
where we introduce $\bm \psi'(\bm a^k)_j = \big(\Psi'\big(\varphi_k(\bm a^k)\big),\eta_j^\mathrm{ch}\big)$ and $\bm \energy^{\delta_\pf}_{\mathrm{e}}(\bm a^k,\bm c^k)_j 
= \big( \delta_\varphi\mathcal{E}_\mathrm{e}\big(\varphi_k(\bm a^k), \strain[u_k(\bm c^k)]\big),\eta_j^\mathrm{ch} \big)$. By substituting $\bm b^k$ into equation \eqref{eq:ch1matrix} we eliminate $\bm b^k$ from the system. Next, equation \eqref{eq:elasticitydiscspace} can be written as 
\begin{equation*}
\regularization \bm C_{\bm{\varepsilon}} \partial_t \bm c^k + \bm \energy^{\delta_{\bm \varepsilon}}_{\mathrm{e}}(\bm a^k,\bm c^k) + \bm \energy^{\delta_{\bm \varepsilon}}_{\mathrm{f}}(\bm c^k, \bm d^k) =  \bm{\hat{f}},
\end{equation*}
where we define $\big(\bm C_{\bm{\varepsilon}}\big)_{ji} 
= \big(\Div \bm \eta_i^{\bm u},\Div \bm \eta_j^{\bm u} \big)$, 
$\bm \energy^{\delta_{\bm \varepsilon}}_{\mathrm{e}}(\bm a^k,\bm c^k)_j 
= \big(\delta_{\bm \varepsilon}\energy_\mathrm{e}\big(\pf_k(\bm a^k),\strain[u_k(\bm c^k)]\big),\bm \varepsilon(\bm \eta_j^{\bm u}) \big)$, $\bm \energy^{\delta_{\bm \varepsilon}}_{\mathrm{f}}(\bm c^k, \bm d^k)_j 
= \big( \delta_{\bm \varepsilon}\energy_{\mathrm{f},\revision{k}}\big(\strain[u_k(\bm c^k)],\theta_k(\bm d^k)\big),\bm \varepsilon(\bm \eta_j^{\bm u}) \big)$ 
and $\bm{\hat{f}}_j = \left\langle \bm f, \bm \eta_j^{\bm u} \right\rangle$.
We can rewrite $\bm \energy^{\delta_{\bm \varepsilon}}_{\mathrm{e}}(\bm a^k,\bm c^k)_j$ as
\begin{align*}
\bm \energy^{\delta_{\bm \varepsilon}}_{\mathrm{e}}(\bm a^k,\bm c^k)_j 
&= \big( \mathbb{C}(\varphi_k(\bm a^k))\big({\bm \varepsilon}(\bm u_k(\bm c^k))-\mathcal{T}(\varphi_k(\bm a^k))\big),\bm \varepsilon(\bm \eta_j^{\bm u}) \big) \\
&=: \bm E_{\bm{\varepsilon}}(\bm a^k)_j\bm c^k - \bm t_{\bm{\varepsilon}}(\bm a^k)_j,
\end{align*}
with $\big(\bm E_{\bm{\varepsilon}}(\bm a^k)\big)_{ji} = \big( \mathbb{C}(\varphi_k(\bm a^k))\bm \varepsilon(\bm \eta_i^{\bm u}),\bm \varepsilon(\bm \eta_j^{\bm u}) \big)$ and $\bm t_{\bm{\varepsilon}}(\bm a^k)_j = \big( \mathbb{C}(\pf_k(\bm a^k))\mathcal{T}(\pf_k(\bm a^k)),\bm \varepsilon(\bm\eta_j^{\bm u}) \big)$. \\ 
Similarly, \revision{by using the definition and linearity of $\Pi_k^\theta$,} for $\bm\energy^{\delta_{\bm \varepsilon}}_\mathrm{f}(\bm c^k,\bm d^k)_j$ we obtain  
\begin{align*}
 \bm \energy^{\delta_{\bm \varepsilon}}_{\mathrm{f}}(\bm c^k, \bm d^k)_j
 &= \big( -\alpha M\theta_k(\bm d^k)\bm I + M\alpha^2 \revision{\Pi_k^\theta} \Div \bm u_k(\bm c^k)\bm I ,\bm \varepsilon( \bm \eta_j^{\bm u})\big)\\
&=: {\bm u_{\bm{\varepsilon}}(\bm d^k)}_j + {\bm F_{\bm{\varepsilon}}}_j\bm c^k,
\end{align*}
with the matrix given as $\big(\bm F_{\bm{\varepsilon}}\big)_{ji} = \big( M\alpha^2 \revision{\Pi_k^\theta} \Div \bm \eta_i^{\bm u},\Div \bm \eta_j^{\bm u} \big)$ and the vector $\bm u_{\bm{\varepsilon}}(\bm d^k)_j = \big( -\alpha M\theta_k(\bm d^k)\bm I,\bm \varepsilon( \bm \eta_j^{\bm u})\big)$. 
Therefore, equation \eqref{eq:elasticitydiscspace} reduces to
\begin{align}
\label{eq:algebraic-compact-elasticity}
\regularization \bm C_{\bm{\varepsilon}} \partial_t \bm c^k + \left(\bm E_{\bm{\varepsilon}}(\bm a^k) + \bm F_{\bm{\varepsilon}}\right)\bm c^k = \bm{\hat{f}} + \bm t_{\bm{\varepsilon}}(\bm a^k) - \bm u_{\bm{\varepsilon}}(\bm d^k).
\end{align}
Since $\bm C_{\bm{\varepsilon}}$ is symmetric positive semi-definite, this allows for a spectral decomposition $\bm C_{\bm{\varepsilon}} = \bm{Q} \begin{bmatrix} \bm{D} & \bm{0} \\ \bm{0} & \bm{0}\end{bmatrix} \bm{Q}^\top$, where $\bm{Q}$ is orthogonal and $\bm{D}$ diagonal and invertible. Introducing a variable transformation $\bm{c}_Q^k:=\bm{Q}^\top \bm{c}^k$, we essentially split $\bm{c}^k$ into its volumetric and deviatoric contributions and thus write $\bm{c}_Q^k = [\bm{c}_v^k, \bm{c}_d^k$]. Introducing this transformation into~\eqref{eq:algebraic-compact-elasticity} yields
\begin{align*}
     \begin{bmatrix} \partial_t\bm c_v^k \\ \bm 0 \end{bmatrix} + \begin{bmatrix} \regularization^{-1}\bm{D}^{-1} & \bm 0 \\ \bm 0 & \bm I \end{bmatrix} \bm{Q}^\top \left(\bm E_{\bm{\varepsilon}}(\bm a^k) + \bm F_{\bm{\varepsilon}}\right) \bm{Q} \begin{bmatrix} \bm c_v^k \\ \bm c_d^k \end{bmatrix} = \begin{bmatrix} \regularization^{-1}\bm{D}^{-1} & \bm 0 \\ \bm 0 & \bm I \end{bmatrix} \bm{Q}^\top \left[\bm{\hat{f}} + \bm t_{\bm{\varepsilon}}(\bm a^k) - \bm u_{\bm{\varepsilon}}(\bm d^k)\right].
\end{align*}
Furthermore, $\bm E_{\bm{\varepsilon}}(\bm a^k) + \bm F_{\bm{\varepsilon}}$ is symmetric positive definite. To show this, we introduce a fixed but arbitrary vector $\bm x \in \mathbb{R}^k\setminus\{\bm 0\}$ and, using assumptions (A3) and (A2) \revision{and the definition and linearity of $\Pi_k^\theta$}, consider
\begin{align*}
 \bm x^T \bm E_{\bm{\varepsilon}}(\bm a^k) \bm x &= \sum_{j=1}^k x_j \sum_{i=1}^k \big( \mathbb{C}(\varphi_k(\bm a^k))\bm \varepsilon(\bm \eta_i^{\bm u}),\bm \varepsilon(\bm \eta_j^{\bm u}) \big) x_i 
 \overset{\eqref{eq:elastictensorbound}}{\geq} c_{\mathbb{C}} \bigg\|\bm \varepsilon\!\left(\sum_{i=1}^k x_i \bm \eta_i^{\bm u}\right)\bigg\|_{\tensSpL}^2 > 0,\\
 \bm x^T \bm F_{\bm{\varepsilon}}\bm x &= \sum_{j=1}^k x_j \sum_{i=1}^k \big( M\alpha^2 \revision{\Pi_k^\theta} \Div \bm \eta_i^{\bm u}, \revision{\Pi_k^\theta} \Div \bm \eta_j^{\bm u} \big) x_i 
 \geq M \alpha^2 \bigg\|\revision{\Pi_k^\theta} \Div\left(\sum_{i=1}^k x_i \bm \eta_i^{\bm u}\right)\bigg\|_{L^2(\Omega)}^2 \geq 0.
\end{align*}
Thus, also $\bm{Q}^\top \left(\bm E_{\bm{\varepsilon}}(\bm a^k) + \bm F_{\bm{\varepsilon}}\right) \bm{Q}$ is symmetric positive definite, and we can eliminate $\bm{c}_d^k$, reducing to
\begin{align*}
     \partial_t\bm c_v^k + \bm A_{vv} \bm c_v^k = \bm b_v(\bm a^k, \bm d^k)
\end{align*}
for a suitable, invertible matrix $\bm A_{vv}$ and vector $\bm{b}_v$ continuously depending on $\bm{a}^k$ and $\bm{d}^k$.

The last two equations \eqref{eq:flowdiscspace} and \eqref{eq:darcyflowdiscspace} can be written as 
\begin{equation}
 \underbrace{\bm M_{\theta\theta}}_{= \bm I} \partial_t \bm d^k +  \bm B_{\theta\bm q}\bm e^k = \bm{\hat{s}_f} 
 \label{eq:flowdiscspacematrix}
\end{equation}
with $\bm{\hat{s}_f}{_j} = \big( S_f,\eta_j^\theta \big)$
and 
\begin{equation}
 \bm M_{\kappa,\bm q\bm q}(\bm a^k)\bm e^k - \bm \energy_\mathrm{f}^{\delta_\theta}(\bm c^k,\bm d^k) = 0,
\end{equation}
for $\bm M_{\kappa,\bm q\bm q}(\bm a^k)_{ji} = \big( \kappa(\pf_k(\bm a^k))^{-1}\bm \eta_i^{\bm q},\bm \eta_j^{\bm q} \big)$, and $\bm \energy_{\mathrm{f}}^{\delta_\theta}(\bm c^k,\bm d^k)_{j} = \big( \delta_\theta\energy_{\mathrm{f},\revision{k}}(\strain[u_k(\bm c^k)],\theta_k(\bm d^k)),\nabla \cdot \bm \eta_j^{\bm q} \big)$.
By inverting $\bm M_{\kappa,\bm q\bm q}(\bm a^k)$ we get the coefficient vector $\bm e^k$ that we can eliminate from the system by inserting it into equation \eqref{eq:flowdiscspacematrix}. Note that the inverse is well-defined since $\bm M_{\kappa,\bm q\bm q}(\bm a^k)$ is positive definite due to assumption (A2).
To summarize, the set of equations \eqref{eq:ch1discspace}--\eqref{eq:darcyflowdiscspace} can be written as a system of ordinary differential equations in the form of
\begin{equation}
\partial _t \begin{pmatrix} \bm a^k \\ \bm c_v^k \\ \bm d^k \end{pmatrix} = H\begin{pmatrix} \bm a^k \\ \bm c_v^k \\ \bm d^k \end{pmatrix}
\label{eq:generalODEsystem}
\end{equation}
where the right-hand side depends in a nonlinear way on the solution. Since we assume continuity for $m(\cdot)$, $\kappa(\cdot)$ and $\Psi'(\cdot)$ and by the Lipschitz continuity also for $\mathbb{C}(\cdot)$ and $\mathcal{T}(\cdot)$, cf.\ assumptions (A1)--(A4), we can infer that $H$ is a continuous function.
Considering the initial value problem resulting from adding the initial conditions $\bm a^k(0)$, $\bm c_v^k(0)$ and $\bm d^k(0)$ to the system, we can therefore apply the Peano-Cauchy existence theorem to obtain the existence of a $T_k \in (0,T]$ and a local solution triple $(\bm a^k, \bm c_v^k, \bm d^k)$ to the system of equations \eqref{eq:generalODEsystem} defined on $[0,T_k]$ for all $k \in \mathbb{N}$.  By the derivation this holds also true for the other coefficient functions $\bm b^k$, $\bm{c}_d^k$ (and thus $\bm{c}^k$) and $\bm e^k$, yielding the existence of a local-in-time solution to \eqref{eq:discspace} for $t\in [0,T_k]$. Furthermore, the Peano-Cauchy existence theorem allows us to infer that the coefficient functions are continuously differentiable, that is $\bm a^k$, $\bm b^k$, $\bm c^k$, $\bm d^k$, $\bm e^k \in C^1\big([0,T_k]; \mathbb{R}^k\big)$.
\end{proof}

The \textit{a priori} estimates derived in the subsequent section will be uniform in $k$ allowing to conclude that $T_k = T$ for all $k \in \mathbb{N}$. The constructed initial data has finite energy, as we briefly conclude in the following two lemmas.

\begin{lemma}\label{lemma:initial-displacement-bound}
For the initial displacement $\bm{u}_{k,0}$, defined in~\eqref{eq:initial-displacement}, there exists a positive constant $\zeta_{\bm \varepsilon, 0}$ such that
\begin{equation}
 \frac{1}{C_K} \|\bm u_{k,0}\|_{\vecSpH} \leq \|\bm \varepsilon(\bm u_{k,0})\|_{\tensSpL} \leq \zeta_{\bm \varepsilon, 0} \Big(\|\pf_{k,0}\|_{L^2(\Omega)} + \|\theta_{k,0}\|_{L^2(\Omega)} + 1\Big).
\label{eq:initialStrain}
\end{equation}
\end{lemma}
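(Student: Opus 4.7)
The plan is to establish the two inequalities separately. The left-hand inequality $\frac{1}{C_K}\|\bm u_{k,0}\|_{\vecSpH}\leq \|\bm\varepsilon(\bm u_{k,0})\|_{\tensSpL}$ is a direct application of Korn's inequality on $\vecSpHTr$ (justified since $\mathcal{V}_k^{\bm u}\subseteq \vecSpHTr$), with $C_K$ being the corresponding Korn constant. This requires no further argument.

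For the right-hand inequality, I would exploit the defining variational characterization of $\bm u_{k,0}$ in~\eqref{eq:initial-displacement}. Since $\bm 0 \in \mathcal{V}_k^{\bm u}$ is an admissible competitor in the minimization of $\mathcal{E}(\pf_{k,0},\strain,\theta_{k,0})$, and since $\mathcal{E}_\mathrm{i}(\pf_{k,0})$ is independent of $\bm u$ and cancels, we obtain
\begin{equation*}
\tfrac{1}{2}\bigl(\mathbb{C}(\pf_{k,0})(\bm\varepsilon(\bm u_{k,0})-\mathcal{T}(\pf_{k,0})),\bm\varepsilon(\bm u_{k,0})-\mathcal{T}(\pf_{k,0})\bigr) + \tfrac{M}{2}\|\theta_{k,0}-\alpha\,\nabla\!\cdot\!\bm u_{k,0}\|_{L^2(\Omega)}^2 \leq \tfrac{1}{2}\bigl(\mathbb{C}(\pf_{k,0})\mathcal{T}(\pf_{k,0}),\mathcal{T}(\pf_{k,0})\bigr) + \tfrac{M}{2}\|\theta_{k,0}\|_{L^2(\Omega)}^2.
\end{equation*}
Discarding the non-negative fluid term on the left, applying the coercivity lower bound from (A3), and on the right using the upper bound from (A3) together with the growth bound on $\mathcal{T}$ from (A4), yields $\|\bm\varepsilon(\bm u_{k,0})-\mathcal{T}(\pf_{k,0})\|_{\tensSpL}^2 \leq C\bigl(\|\pf_{k,0}\|_{L^2(\Omega)}^2 + \|\theta_{k,0}\|_{L^2(\Omega)}^2\bigr)$ for a constant $C$ depending only on $c_\mathbb{C}$, $C_\mathbb{C}$, $C_\mathcal{T}$, and $M$.

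I would finish by applying the triangle inequality and (A4) once more,
\begin{equation*}
\|\bm\varepsilon(\bm u_{k,0})\|_{\tensSpL} \leq \|\bm\varepsilon(\bm u_{k,0})-\mathcal{T}(\pf_{k,0})\|_{\tensSpL} + \|\mathcal{T}(\pf_{k,0})\|_{\tensSpL} \leq \zeta_{\bm\varepsilon,0}\bigl(\|\pf_{k,0}\|_{L^2(\Omega)} + \|\theta_{k,0}\|_{L^2(\Omega)} + 1\bigr),
\end{equation*}
absorbing all multiplicative constants and possible additive contributions into $\zeta_{\bm\varepsilon,0}$ (the additive $1$ provides cushion for later generalizations of the eigenstrain law).

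I do not anticipate any genuine obstacle: the argument is the standard ``compare the minimizer to the trivial competitor'' trick. The only care required is to ensure the fluid term on the left is correctly dropped (it is non-negative but not directly needed for the $\bm\varepsilon(\bm u_{k,0})$ bound), and that the Korn inequality applied on $\mathcal{V}_k^{\bm u}$ yields a $k$-independent constant; the latter holds because $\mathcal{V}_k^{\bm u}\subseteq \vecSpHTr$ inherits the constant from the ambient space.
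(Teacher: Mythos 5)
Your proof is correct in substance but takes a genuinely different route from the paper. You compare the minimizer against the trivial competitor $\bm 0$ in the energy, whereas the paper writes down the first-order necessary conditions of~\eqref{eq:initial-displacement}, tests them with $\bm u_{k,0}$ itself, and then runs a coercivity/Cauchy--Schwarz estimate. Both are standard and both yield a $k$-independent constant; your version is arguably more elementary (no Euler--Lagrange step), while the paper's version generalizes more directly to situations where one only has a critical point rather than a global minimizer. One caveat you should be aware of: although~\eqref{eq:initial-displacement} is written with $\mathcal{E}$ alone, the paper's optimality conditions carry the body-force term $\left\langle \bm f,\bm \eta_j^{\bm u} \right\rangle$ on the right-hand side, i.e.\ the functional actually being minimized is $\mathcal{E}(\pf_{k,0},\strain[u],\theta_{k,0}) - \langle \bm f, \bm u\rangle$. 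In your energy comparison this produces an extra term $\langle \bm f, \bm u_{k,0}\rangle \leq C_{\bm f} C_K \|\bm\varepsilon(\bm u_{k,0})\|_{\tensSpL}$ on the right, which you must absorb by Young's inequality; this is precisely why the paper's constant $\zeta_{\bm\varepsilon,0}$ contains $C_K C_{\bm f}$ and why the additive $1$ appears in~\eqref{eq:initialStrain} --- it accounts for the force term, not (as you suggest) for generalizations of the eigenstrain law. With that one adjustment your argument goes through; the left inequality via Korn on the ambient space $\vecSpHTr$ and the $k$-independence of $C_K$ are handled exactly as you say.
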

\begin{proof}
The necessary conditions, corresponding to~\eqref{eq:initial-displacement}, read
\begin{align*}
 \big( \delta_{\bm \varepsilon}\energy_\mathrm{e}(\pf_{k,0},\strain[u_{k,0}]),\bm \varepsilon(\bm \eta_j^{\bm u}) \big) + \big( \delta_{\bm \varepsilon}\energy_{\mathrm{f},\revision{k}}(\strain[u_{k,0}],\theta_{k,0}),\bm \varepsilon(\bm \eta_j^{\bm u}) \big) = \left\langle \bm f,\bm \eta_j^{\bm u} \right\rangle\text{ for }j=1,...,k,
\end{align*}
which allow for a unique solution based on classical linear elasticity theory. As by construction $\bm u_{k,0} \in \mathcal{V}_k^{\bm u}$, suitable weighting of the test functions and summation over $j=1,...,k$ yields
\begin{align*}
 \big( \delta_{\bm \varepsilon}\energy_\mathrm{e}(\pf_{k,0},\strain[u_{k,0}]),\bm \varepsilon(\bm u_{k,0}) \big) + \big( \delta_{\bm \varepsilon}\energy_{\mathrm{f},\revision{k}}(\strain[u_{k,0}],\theta_{k,0}),\bm \varepsilon(\bm u_{k,0}) \big) = \langle \bm f,\bm u_{k,0} \rangle
\end{align*}
which\revision{, by using the definition of $\Pi_k^\theta$,} is equivalent to
\begin{equation}
\begin{aligned}
 \big(\mathbb{C}(\varphi_{k,0}){\bm \varepsilon}(\bm u_{k,0}),\bm \varepsilon(\bm u_{k,0})\big) &+ \big( M\alpha^2 \revision{\Pi_k^\theta} \nabla\cdot \bm u_{k,0}, \revision{\Pi_k^\theta}\nabla \cdot \bm u_{k,0}\big) \\
 &= \big( \mathbb{C}(\pf_{k,0})\mathcal{T}(\varphi_{k,0})+M\alpha\theta_{k,0}\bm I,\bm \varepsilon(\bm u_{k,0})\big) + \langle \bm f,\bm u_{k,0} \rangle.
\end{aligned}
\label{eq:initialdef}
\end{equation}
On the left hand side, we employ (A3) for the first term and drop the second one, while on the right hand side we employ the Cauchy-Schwarz inequality, the duality pairing and Korn's inequality together with assumptions (A2)--(A6), to obtain
\begin{align*}
    c_{\mathbb{C}} \|\bm \varepsilon(\bm u_{k,0})\|_{\tensSpL}^2
    \leq  
    \left(C{_\mathbb{C}} C{_\mathcal{T}} \|\pf_{k,0}\|_{L^2(\Omega)} + M \alpha d^{\frac{1}{2}} \|\theta_{k,0}\|_{L^2(\Omega)} + C_K\|\bm f\|_{\vecSpHM}\right) \, \|\bm \varepsilon(\bm u_{k,0})\|_{\tensSpL}.
\end{align*}
Setting $ \zeta_{\bm \varepsilon, 0} := \frac{1}{c_{\mathbb{C}}} \max\{C{_\mathbb{C}} C{_\mathcal{T}}, M \alpha d^{\frac{1}{2}}, C_KC_{\bm f}\}$ yields the asserted bounds and concludes the proof.
\end{proof}

\begin{lemma}[Stability of initial data\label{lemma:initial-data}]
For the initial data $\theta_{k,0}$, $\pf_{k,0}$, and $\bm u_{k,0}$ as constructed in Section~\ref{sec:disc-in-space}, there exists a constant $C_{\mathcal{E},0}$ such that for all $k$ it holds that
\begin{align}
\label{eq:bound-initial-data}
    \mathcal{E}_{\revision{k}}(\pf_{k,0}, \bm{\varepsilon}(\bm u_{k,0}), \theta_{k,0}) \leq C_{\mathcal{E},0}.
\end{align}
\end{lemma}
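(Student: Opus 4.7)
The goal is to bound each of the three components of $\mathcal{E}(\pf_{k,0}, \bm{\varepsilon}(\bm u_{k,0}), \theta_{k,0})$ uniformly in $k$, using the initial-data hypothesis (A6), the stronger regularity $\pf_0 \in H^2_{\bm n}(\Omega)$ from Remark~\ref{rem:assumption_initial_pf}, Lemma~\ref{lemma:initial-displacement-bound}, and the structural assumptions (A1)--(A4) on $\Psi$, $\mathbb{C}$, $\mathcal{T}$, $M$, $\alpha$.

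\textbf{Step 1: Stability of the projected initial data.} Since $\{\eta_i^{\mathrm{ch}}\}$ is orthonormal in $L^2(\Omega)$ and orthogonal in $H^1(\Omega)$, the projection $\pf_{k,0} = \Pi_k^\pf \pf_0$ satisfies $\|\pf_{k,0}\|_{H^1(\Omega)} \leq \|\pf_0\|_{H^1(\Omega)}$. Because, for $i\geq 2$, the $\eta_i^{\mathrm{ch}}$ are eigenfunctions of the Neumann-Laplacian and $\pf_0 \in H^2_{\bm n}(\Omega)$, the expansion coefficients decay so that $\|\pf_{k,0}\|_{H^2(\Omega)} \leq \|\pf_0\|_{H^2(\Omega)}$. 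Combined with the Sobolev embedding $H^2(\Omega) \hookrightarrow L^\infty(\Omega)$ (valid for $d\in\{2,3\}$), this yields a uniform-in-$k$ bound $\|\pf_{k,0}\|_{L^\infty(\Omega)} \leq C$. Similarly, $\|\theta_{k,0}\|_{L^2(\Omega)} \leq \|\theta_0\|_{L^2(\Omega)}$.

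\textbf{Step 2: Bounding the interface energy.} The gradient part is immediate: $\frac{\gamma\ell}{2}\|\nabla \pf_{k,0}\|_{L^2(\Omega)}^2 \leq \frac{\gamma\ell}{2}\|\nabla \pf_0\|_{L^2(\Omega)}^2$. For the potential part, since $\Psi \in C^2(\mathbb{R})$ by (A1) and $\|\pf_{k,0}\|_{L^\infty(\Omega)}$ is uniformly bounded by Step 1, continuity gives $\|\Psi(\pf_{k,0})\|_{L^\infty(\Omega)} \leq \max_{|s|\leq C}\Psi(s) =: C_\Psi^\ast$, hence $\|\Psi(\pf_{k,0})\|_{L^1(\Omega)} \leq C_\Psi^\ast |\Omega|$.

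\textbf{Step 3: Bounding the elastic and fluid energies.} Using (A3), (A4), the Cauchy--Schwarz inequality, and the triangle inequality,
\begin{align*}
    \mathcal{E}_{\mathrm{e}}(\pf_{k,0},\bm\varepsilon(\bm u_{k,0}))
    \leq C_{\mathbb{C}}\Big(\|\bm\varepsilon(\bm u_{k,0})\|_{\tensSpL}^2 + C_{\mathcal{T}}^2\|\pf_{k,0}\|_{L^2(\Omega)}^2\Big).
\end{align*}
Under (A2), expanding the square in the fluid energy and applying Young's inequality,
\begin{align*}
    \mathcal{E}_{\mathrm{f}}(\pf_{k,0},\bm\varepsilon(\bm u_{k,0}),\theta_{k,0})
    \leq M\Big(\|\theta_{k,0}\|_{L^2(\Omega)}^2 + \alpha^2 d\, \|\bm\varepsilon(\bm u_{k,0})\|_{\tensSpL}^2\Big).
\end{align*}
Invoking Lemma~\ref{lemma:initial-displacement-bound} to bound $\|\bm\varepsilon(\bm u_{k,0})\|_{\tensSpL}$ by $\|\pf_{k,0}\|_{L^2(\Omega)} + \|\theta_{k,0}\|_{L^2(\Omega)} + 1$, and combining with the $L^2$-stability from Step 1, the elastic and fluid energies are controlled by a constant depending only on the data and material parameters.

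\textbf{Step 4: Conclusion.} Summing the bounds from Steps 2 and 3 yields a constant $C_{\mathcal{E},0}$ depending only on $\|\pf_0\|_{H^2(\Omega)}$, $\|\theta_0\|_{L^2(\Omega)}$, $C_{\bm f}$, $|\Omega|$, and the material constants, proving~\eqref{eq:bound-initial-data}.

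The main technical obstacle is controlling $\|\Psi(\pf_{k,0})\|_{L^1(\Omega)}$ uniformly in $k$, since (A1) does not assume a polynomial growth bound on $\Psi$. This is precisely where the enhanced regularity $\pf_0 \in H^2_{\bm n}(\Omega)$ of Remark~\ref{rem:assumption_initial_pf} enters, ensuring through Sobolev embedding that $\pf_{k,0}$ stays in a bounded set of $L^\infty(\Omega)$, on which $\Psi$ is bounded by continuity.
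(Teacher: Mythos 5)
Your proof is correct, and its overall architecture (projection stability for $\pf_{k,0},\theta_{k,0}$, Lemma~\ref{lemma:initial-displacement-bound} for the displacement, assumptions (A1)--(A4) for the individual energy contributions) matches the paper's. The one step you handle by a genuinely different mechanism is the crux you correctly identify: the uniform bound on $\|\Psi(\pf_{k,0})\|_{L^1(\Omega)}$. The paper follows~\cite{Garcke2021}: it uses that $\Pi_k^\pf \pf_0 \to \pf_0$ strongly in $H^2_{\bm n}(\Omega)$, hence a.e.\ in $\Omega$, and combines the continuity of $\Psi$ with $\Psi(\pf_0)\in L^1(\Omega)$ to extract the uniform $L^1$ bound (implicitly a dominated-convergence/uniform-integrability argument). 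You instead push the $H^2$-stability of the spectral projection through the Sobolev embedding $H^2(\Omega)\hookrightarrow L^\infty(\Omega)$ (valid for $d\in\{2,3\}$ on a Lipschitz domain) to confine $\pf_{k,0}$ to a fixed compact interval, on which the continuous $\Psi$ is bounded; this yields the $L^1$ bound directly and even makes the hypothesis $\Psi(\pf_0)\in L^1(\Omega)$ superfluous at this stage. Your route is more elementary and self-contained; the paper's route is the one that generalizes when only a.e.\ convergence (rather than an $L^\infty$ bound) is available, which is why it is phrased that way there. One cosmetic imprecision: the inequality $\|\pf_{k,0}\|_{H^2(\Omega)}\leq\|\pf_0\|_{H^2(\Omega)}$ with constant $1$ is not what the spectral decay gives directly; one controls $\|\Pi_k^\pf\pf_0\|_{L^2}$ and $\|\Delta\Pi_k^\pf\pf_0\|_{L^2}$ by the corresponding norms of $\pf_0$ and then invokes elliptic regularity, so the bound holds with a generic constant $C$ (exactly as the paper writes it); this does not affect the argument.
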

\begin{proof}
Under assumption (A6) and additionally $\varphi_{0}\in H^2_{\bm n}(\Omega)$, cf.\ Remark~\ref{rem:assumption_initial_pf}, it follows from the definition of the initial discrete data via projection that there exist positive constants $\bar \zeta_{\pf,0},\ \tilde \zeta_{\pf,0},\ \bar \zeta_{\theta,0}$ such that
\begin{alignat*}{7}
    \|\pf_{k,0}\|_{L^2(\Omega)} 
    &=& \hspace*{0.1cm} \|\Pi_k^\mathrm{ch} \pf_0\|_{L^2(\Omega)} 
    &\leq& \hspace*{0.1cm} \|\pf_0\|_{L^2(\Omega)} 
    &\leq& \hspace*{0.1cm} \bar \zeta_{\pf,0}, \\
    \|\varphi_{k,0}\|^2_{H^1(\Omega)}
    &\leq& \|\varphi_{k,0}\|^2_{H^2(\Omega)} 
    &\leq& \hspace*{0.1cm} C\|\varphi_{0}\|^2_{H^2(\Omega)} 
    &\leq& \tilde \zeta_{\pf,0},\\
    \|\theta_{k,0}\|_{L^2(\Omega)} 
    &=& \|\Pi_k^\theta \theta_0\|_{L^2(\Omega)} 
    &\leq& \|\theta_0 \|_{L^2(\Omega)} 
    &\leq& \bar \zeta_{\theta,0}.
\end{alignat*}
Moreover, carefully following the identical steps as in~\cite{Garcke2021}, as $\pf_0\in H^2_{\bf n}(\Omega)$, we have that $\pf_{k,0} = \Pi_k^\mathrm{ch} \pf_0$ converges strongly to $\pf_0$ in $H^2_{\bf n}(\Omega)$ and hence also a.e.~in $\Omega$. The continuity assumption (A1) and the fact that  $\Psi(\pf_0)\in L^1(\Omega)$ allow to deduce the uniform bound 
\begin{align}
 \|\Psi(\pf_{k,0})\|_{L^1(\Omega)} \leq \zeta_{\Psi,0}.
\label{eq:boundpsiinit}
\end{align}
For the initial displacement, we deduce from Lemma~\ref{lemma:initial-displacement-bound} and Korn's inequality
\begin{align*}
 \|\bm u_{k,0}\|_{\vecSpH} \leq C_K \zeta_{\bm \varepsilon, 0} \big(\bar \zeta_{\pf,0} + \bar \zeta_{\theta,0} + 1\big).
\end{align*}
Finally, using assumption (A2), and the above bounds, we obtain the collective estimate~\eqref{eq:bound-initial-data}, where $C_{\mathcal{E},0}$ depends on the above uniform bounds, as well as natural modeling constants.
\end{proof}

\subsection{\textit{A priori} estimates} \label{sec:a-priori-estimates}
In the following we aim to derive uniform \textit{a priori} estimates for the discrete solution. For this, we will make use of an energy-dissipation identity. 

\begin{lemma}[Energy-dissipation identity\label{lemma:energy-dissipation}]
The discrete-in-space solution $(\pf_k, \mu_k, \bm u_k, \theta_k, \bm q_k)$ of~\eqref{eq:discspace} satisfies
\begin{equation}
\begin{aligned}
 \revision{\mathcal{E}_k}\big(\pf_k(T_k), &\strain[u_k(T_k)], \theta_k(T_k)\big) + \int_0^{T_k} \left[\regularization \| \partial_t \nabla \cdot \bm u_k \|_{L^2(\Omega)}^2 + \|m(\pf_k)^{\frac{1}{2}}\nabla\mu_k\|_{\vecSpL}^2 + \|\kappa(\pf_k)^{-\frac{1}{2}}\bm q_k\|_{\vecSpL}^2 \right]\; dt\\
 &= \revision{\mathcal{E}_k}(\pf_{k,0}, \strain[u_{k,0}], \theta_{k,0}) + \int_0^{T_k} \left[\big( R,\mu_k \big) + \langle \bm f,\partial_t\bm u_k \rangle + \big( S_f,\delta_\theta\energy_{\mathrm{f},\revision{k}}(\strain[u_k],\theta_k) \big) \right] \;dt.
\end{aligned}
\label{eq:energydissipation2}
\end{equation}
Moreover, under the absence of external contributions, i.e., $R = 0$, $\bm f = \bm 0$, and $S_f = 0$, the energy dissipation property $\tfrac{d}{dt}\mathcal{E}_{\revision{k}}(\pf_k, \strain[u_k], \theta_k) \leq 0$ is satisfied.
\end{lemma}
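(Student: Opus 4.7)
My strategy is to establish the identity as a discrete analogue of the gradient-flow energy-dissipation equation $\tfrac{d}{dt}\energy=-(\text{dissipation})+(\text{external work})$. The plan is to test each of the five equations in~\eqref{eq:discspace} with its naturally dual quantity, sum, and identify the left-hand side with $\tfrac{d}{dt}\energy(\pf_k,\strain[u_k],\theta_k)$ via the chain rule. Concretely, I would test~\eqref{eq:ch1discspace} with $\mu_k$, \eqref{eq:ch2discspace} with $\partial_t\pf_k$, \eqref{eq:elasticitydiscspace} with $\partial_t\bm u_k$, \eqref{eq:darcyflowdiscspace} with $\bm q_k$, and \eqref{eq:flowdiscspace} with $\Pi_k^\theta\delta_\theta\energy_\mathrm{f}(\pf_k,\strain[u_k],\theta_k)$; each of these is admissible since the ODE-coefficient vectors $\bm a^k,\bm b^k,\bm c^k,\bm d^k,\bm e^k$ are of class $C^1$ in time, so that all time derivatives lie in the corresponding discrete spaces.

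Adding the tested phase-field equations, the mutually dual $\mu_k$-contributions cancel, and using~\eqref{eq:ch2discspacealter} to avoid a Laplacian on $\pf_k$, I obtain $(m(\pf_k)\nabla\mu_k,\nabla\mu_k)+(\delta_\pf\energy,\partial_t\pf_k)=(R,\mu_k)$. Testing~\eqref{eq:elasticitydiscspace} with $\partial_t\bm u_k$ produces the viscoelastic dissipation together with the elastic and fluid work terms: $\regularization\|\partial_t\Div\bm u_k\|_{L^2(\Omega)}^2+(\delta_{\bm\varepsilon}\energy,\bm\varepsilon(\partial_t\bm u_k))=\langle\bm f,\partial_t\bm u_k\rangle$. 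With the $C^1$-in-time regularity and the symmetry of $\mathbb{C}$ from (A3), the chain rule yields
\[
\tfrac{d}{dt}\energy(\pf_k,\strain[u_k],\theta_k)=(\delta_\pf\energy,\partial_t\pf_k)+(\delta_{\bm\varepsilon}\energy,\bm\varepsilon(\partial_t\bm u_k))+(\delta_\theta\energy_\mathrm{f},\partial_t\theta_k),
\]
whose first two contributions are now identified from the steps above.

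The main subtlety lies in the last chain-rule term $(\delta_\theta\energy_\mathrm{f},\partial_t\theta_k)$, since $\delta_\theta\energy_\mathrm{f}$ is not a priori in $\mathcal V_k^\theta$. Because $\partial_t\theta_k\in\mathcal V_k^\theta$, I may replace $\delta_\theta\energy_\mathrm{f}$ by its projection $\Pi_k^\theta\delta_\theta\energy_\mathrm{f}$; then~\eqref{eq:flowdiscspace} tested with $\Pi_k^\theta\delta_\theta\energy_\mathrm{f}$ yields $(\partial_t\theta_k,\delta_\theta\energy_\mathrm{f})=(S_f,\Pi_k^\theta\delta_\theta\energy_\mathrm{f})-(\Div\bm q_k,\Pi_k^\theta\delta_\theta\energy_\mathrm{f})$. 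The crucial ingredient is Lemma~\ref{lemma:flux-basis}: combined with the Neumann-Laplace eigenvalue identity $-\Delta\eta_i^\theta=\lambda_i\eta_i^\theta$ it forces $\Div\bm\eta_i^{\bm q}=\eta_i^\theta$ for $i\geq 2$, so $\Div\bm q_k\in\mathcal V_k^\theta$. This allows dropping the projection in the last inner product, and~\eqref{eq:darcyflowdiscspace} tested with $\bm q_k$ then identifies $(\delta_\theta\energy_\mathrm{f},\Div\bm q_k)=\|\kappa(\pf_k)^{-1/2}\bm q_k\|_{\vecSpL}^2$. Substituting all contributions into the chain rule and integrating over $[0,T_k]$ yields~\eqref{eq:energydissipation2}; the dissipation property for $R=0$, $\bm f=\bm 0$, $S_f=0$ is then immediate from the nonnegativity of the three dissipative integrals.
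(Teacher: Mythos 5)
Your proof is correct and follows essentially the same route as the paper: the same dual test functions ($\mu_k$, $\pm\partial_t\pf_k$, $\partial_t\bm u_k$, $\Pi_k^\theta\delta_\theta\energy_\mathrm{f}$, $\bm q_k$), the chain rule for $\tfrac{d}{dt}\energy$, and integration in time. The only (immaterial) difference is how the projection is removed from the $\big(\Div\bm q_k,\Pi_k^\theta\delta_\theta\energy_\mathrm{f}\big)$ term: you deduce $\Div\bm q_k\in\mathcal V_k^\theta$ from Lemma~\ref{lemma:flux-basis} and the eigenvalue relation, whereas the paper proves the same identity directly from the mixed formulation~\eqref{eq:mixedSpacesDarcy} in its auxiliary Lemma~\ref{lemma:auxiliary-result-projection}.
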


\begin{proof}
We multiply equation \eqref{eq:ch1discspace} by $b_j^k(t)$, \eqref{eq:ch2discspace} by $-\left(a_j^k\right)'(t)$, \eqref{eq:elasticitydiscspace} by $\left(c_j^k\right)'(t)$, \eqref{eq:flowdiscspace} by the inner product $\big(\delta_\theta\energy_{\mathrm{f},\revision{k}}(\strain[u_k],\theta_k),\eta_j^\theta\big)$, and \eqref{eq:darcyflowdiscspace} by $e_j^k(t)$ and sum over $j=1,\ldots,k$ to get
\begin{align*}
\left\langle \partial_t \pf_k, \mu_k \right\rangle + \|m(\pf_k)^{\frac{1}{2}}\nabla\mu_k\|_{\vecSpL}^2 &= \big( R, \mu_k\big), \\
-\left\langle \mu_k,\partial_t\pf_k \right\rangle + \left\langle\delta_\varphi \mathcal{E}_{\revision{k}}(\varphi_k, \strain[u_k], \theta_k), \partial_t\pf_k\right\rangle &= 0, \\
\regularization \| \partial_t \nabla \cdot \bm u_k \|_{L^2(\Omega)}^2 + \left\langle \delta_{\bm \varepsilon}\mathcal{E}_{\revision{k}}(\pf_k,\strain[u_k],\theta_k),\bm \varepsilon(\partial_t\bm u_k) \right\rangle &= \left\langle \bm f,\partial_t\bm u_k \right\rangle, \\
\left\langle \partial_t\theta_k,\delta_\theta\mathcal{E}_\mathrm{f,\revision{k}}(\strain[u_k], \theta_k) \right\rangle + \big(\nabla\cdot\bm q_k,\delta_\theta\mathcal{E}_{\mathrm{f},\revision{k}}(\strain[u_k], \theta_k)\big) &= \big( S_f,\delta_\theta\mathcal{E}_{\mathrm{f},\revision{k}}(\strain[u_k],\theta_k)\big), \\
\|\kappa(\pf_k)^{-\frac{1}{2}}\bm q_k\|_{\vecSpL}^2 - \big( \delta_\theta\mathcal{E}_{\mathrm{f},\revision{k}}(\strain[u_k],\theta_k),\nabla \cdot \bm q_k \big) &= 0.
\end{align*}
By adding the above equations and employing the chain rule we get
\begin{equation}
\begin{aligned}
&\frac{d}{dt}\mathcal{E}_{\revision{k}}(\pf_k, \strain[u_k], \theta_k) + \regularization \|\partial_t \nabla \cdot \bm{u}_k\|_{L^2(\Omega)}^2 + \|m(\pf_k)^{\frac{1}{2}}\nabla\mu_k\|_{\vecSpL}^2 + \|\kappa(\pf_k)^{-\frac{1}{2}}\bm q_k\|_{\vecSpL}^2 \\ 
&\hspace{1cm} = \big( R,\mu_k \big) + \big( S_f,\delta_\theta\energy_{\mathrm{f},\revision{k}}(\strain[u_k],\theta_k) \big) + \langle \bm f,\partial_t\bm u_k \rangle.
\end{aligned}
\label{eq:addedweakform-zero-k}
\end{equation}
Integration in time over the existence interval $[0,T_k]$ concludes the proof.
\end{proof}

\begin{corollary}
Let $(\pf_k, \mu_k, \bm u_k, \theta_k, \bm q_k)$ be the discrete-in-space solution of~\eqref{eq:discspace}. For any $K\in\mathbb{R}$ it holds that
\begin{equation}
\begin{aligned}
&\frac{d}{dt} \mathcal{E}_{\revision{k}}(\pf_k, \strain[u_k], \theta_k) + \frac{d}{dt} K\frac{\|\pf_k\|_{L^2(\Omega)}^2}{2} + 
\regularization \|\partial_t \nabla \cdot \bm{u}_k\|_{L^2(\Omega)}^2 + \|m(\pf_k)^{\frac{1}{2}}\nabla\mu_k\|_{\vecSpL}^2 + \|\kappa(\pf_k)^{-\frac{1}{2}}\bm q_k\|_{\vecSpL}^2 \\ 
&\hspace{1cm} = \big( R,\mu_k \big) + K\big( R,\pf_k \big) + \big( S_f, \delta_\theta\energy_{\mathrm{f},\revision{k}}(\strain[u_k],\theta_k) \big) - K\big( m(\pf_k)\nabla\mu_k,\nabla\pf_k \big) + \langle \bm f,\partial_t\bm u_k \rangle.
\end{aligned}
\label{eq:addedweakform}
\end{equation}
\end{corollary}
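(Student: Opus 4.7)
The plan is to obtain the stated identity by testing the first discrete equation~\eqref{eq:ch1discspace} with an extra multiple of $\pf_k$ itself and adding the result to the differential form of the energy-dissipation identity established in the proof of Lemma~\ref{lemma:energy-dissipation}.

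Concretely, I would start from identity~\eqref{eq:addedweakform-zero-k}, that is, the pre-integration form
\begin{align*}
\frac{d}{dt}\mathcal{E}(\pf_k, \strain[u_k], \theta_k) &+ \regularization \|\partial_t \nabla \cdot \bm{u}_k\|_{L^2(\Omega)}^2 + \|m(\pf_k)^{\frac{1}{2}}\nabla\mu_k\|_{\vecSpL}^2 + \|\kappa(\pf_k)^{-\frac{1}{2}}\bm q_k\|_{\vecSpL}^2 \\
&= \big( R,\mu_k \big) + \big( S_f, \Pi_k^\theta\delta_\theta\energy_\mathrm{f}(\pf_k,\strain[u_k],\theta_k) \big) + \langle \bm f,\partial_t\bm u_k \rangle,
\end{align*}
which holds in $C^0([0,T_k])$ thanks to the $C^1$-regularity of the coefficient vectors. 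Next, I would multiply the discrete phase-field equation~\eqref{eq:ch1discspace} by $K\,a_j^k(t)$, sum over $j=1,\dots,k$, and use the ansatz~\eqref{eq:discinspaceansatz} to rewrite the sum as a test of~\eqref{eq:ch1discspace} against $K\pf_k\in\mathcal{V}_k^{\mathrm{ch}}$. This gives
\begin{align*}
\langle \partial_t \pf_k, K\pf_k\rangle + K\big( m(\pf_k)\nabla\mu_k, \nabla\pf_k\big) = K\big( R, \pf_k\big).
\end{align*}

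Applying the chain rule to the first term yields $\langle \partial_t \pf_k, K\pf_k\rangle = \tfrac{d}{dt}\bigl(K\|\pf_k\|_{L^2(\Omega)}^2/2\bigr)$, so the identity becomes
\begin{align*}
\frac{d}{dt}\Big(K\tfrac{\|\pf_k\|_{L^2(\Omega)}^2}{2}\Big) = K\big( R,\pf_k\big) - K\big( m(\pf_k)\nabla\mu_k, \nabla\pf_k\big).
\end{align*}
Adding this to the differential energy-dissipation identity above produces exactly~\eqref{eq:addedweakform}.

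There is no genuine obstacle here; the argument is purely an algebraic augmentation of Lemma~\ref{lemma:energy-dissipation}. The only point that deserves attention is the justification that $K\pf_k(t)\in\mathcal{V}_k^{\mathrm{ch}}$ so that~\eqref{eq:ch1discspace} can indeed be tested against it, which is immediate from the Galerkin ansatz, and the fact that the time differentiation of $\tfrac{1}{2}\|\pf_k\|_{L^2(\Omega)}^2$ is classical (not only in a distributional sense) since $\bm a^k\in C^1([0,T_k];\mathbb{R}^k)$. The motivation behind the auxiliary $K\|\pf_k\|_{L^2(\Omega)}^2$ term will become visible in the subsequent a priori estimates, where assumption~(A1), in particular the semi-convexity bound $\Psi''+c_\Psi\geq 0$, is combined with a suitable choice of $K$ to control the non-convex contributions of the free energy from below.
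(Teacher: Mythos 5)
Your proposal is correct and follows essentially the same route as the paper: the paper likewise multiplies~\eqref{eq:ch1discspace} by $Ka_j^k(t)$, sums over $j=1,\dots,k$ to obtain $K\langle \partial_t \pf_k,\pf_k\rangle + K(m(\pf_k)\nabla\mu_k,\nabla\pf_k) = K(R,\pf_k)$, and adds this to~\eqref{eq:addedweakform-zero-k}. Your additional remarks on the $C^1$-regularity of $\bm a^k$ justifying the classical chain rule and on the admissibility of $K\pf_k$ as a test function are correct but left implicit in the paper.
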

\begin{proof}
We multiply equation \eqref{eq:ch1discspace} by $Ka_j^k(t)$ and sum over $j=1,...,k$ to obtain
\begin{align*}
    K\left\langle \partial_t \pf_k, \pf_k \right\rangle + K\big( m(\pf_k)\nabla\mu_k,\nabla\pf_k\big) &= K\big( R, \pf_k\big).
\end{align*}
Summation onto~\eqref{eq:addedweakform-zero-k} proves the assertion.
\end{proof}

Before we continue with two results on \textit{a priori} estimates, we provide lower bounds for the \revision{discrete} elastic and hydraulic energies\revision{, followed by a lower bound for the discrete free energy}.

\begin{prop}\label{prop:elasticenergybound}
The elastic energy is bounded from below by
\begin{equation}
 \mathcal{E}_\mathrm{e}(\pf_k,\strain[{\bm u}_k]) \geq \frac{c_\mathbb{C}}{4}\|\bm \varepsilon({\bm u}_k)\|_{\tensSpL}^2 - \frac{c_\mathbb{C}C_{\mathcal{T}}^2}{2}\|\pf_k\|_{L^2(\Omega)}^2.
\end{equation}
\end{prop}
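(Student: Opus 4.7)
The plan is to estimate the integrand of $\mathcal{E}_\mathrm{e}$ pointwise from below using assumption (A3), and then split the resulting squared norm of $\bm\varepsilon(\bm u) - \mathcal{T}(\pf)$ so as to isolate $\|\bm\varepsilon(\bm u)\|_{\tensSpL}^2$ at the cost of a controllable term in $\mathcal{T}(\pf)$, which in turn will be absorbed via (A4).

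Concretely, first I would apply the lower bound in \eqref{eq:elastictensorbound} from assumption (A3) pointwise with $\bm\varepsilon = \bm\varepsilon(\bm u) - \mathcal{T}(\pf)$ (symmetric by (A4)) and integrate over $\Omega$, giving
\begin{equation*}
    \mathcal{E}_\mathrm{e}(\pf,\strain) \ge \frac{c_\mathbb{C}}{2}\,\|\bm\varepsilon(\bm u) - \mathcal{T}(\pf)\|_{\tensSpL}^2.
\end{equation*}

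Next, I would use the elementary inequality $\|a\|^2 \le 2\|a-b\|^2 + 2\|b\|^2$ (a direct consequence of the triangle inequality combined with Young's inequality), rearranged into the form
\begin{equation*}
    \|a-b\|^2 \;\ge\; \tfrac{1}{2}\|a\|^2 - \|b\|^2,
\end{equation*}
applied with $a = \bm\varepsilon(\bm u)$ and $b = \mathcal{T}(\pf)$ in $\tensSpL$. This yields
\begin{equation*}
    \mathcal{E}_\mathrm{e}(\pf,\strain) \;\ge\; \frac{c_\mathbb{C}}{4}\|\bm\varepsilon(\bm u)\|_{\tensSpL}^2 - \frac{c_\mathbb{C}}{2}\|\mathcal{T}(\pf)\|_{\tensSpL}^2.
\end{equation*}

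Finally, assumption (A4) provides $\|\mathcal{T}(\pf)\|_{\tensSpL}^2 \le C_{\mathcal{T}}^2 \|\pf\|_{L^2(\Omega)}^2$, which plugged in gives exactly the claimed estimate. There is no real obstacle here; the only care needed is the choice of the Young-type splitting constant so that the factor in front of $\|\bm\varepsilon(\bm u)\|_{\tensSpL}^2$ comes out as $c_\mathbb{C}/4$ and the sign works out to produce the stated negative term $-\tfrac{c_\mathbb{C} C_\mathcal{T}^2}{2}\|\pf\|_{L^2(\Omega)}^2$.
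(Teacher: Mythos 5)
Your proposal is correct and follows essentially the same route as the paper: apply the coercivity bound \eqref{eq:elastictensorbound} from (A3) pointwise, use a Young-type splitting to obtain $\|\bm\varepsilon(\bm u)-\mathcal{T}(\pf)\|^2 \geq \tfrac{1}{2}\|\bm\varepsilon(\bm u)\|^2 - \|\mathcal{T}(\pf)\|^2$, and absorb the eigenstrain term via (A4). The paper phrases the splitting as the pointwise Young inequality $2\bm\varepsilon(\bm u)\!:\!\mathcal{T}(\pf)\leq\tfrac{1}{2}|\bm\varepsilon(\bm u)|^2+2|\mathcal{T}(\pf)|^2$, which is the same estimate with the same constants.
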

\begin{proof}
Using the Young's inequality $2\bm \varepsilon({\bm u}_k)\!:\!\mathcal{T}(\pf_k)\leq\frac{1}{2}|\bm \varepsilon({\bm u}_k)|^2+2|\mathcal{T}(\pf_k)|^2$ and assumptions (A3) and (A4) the proposition is easily verified
\begin{align*}
\mathcal{E}_\mathrm{e}(\pf_k,\strain[{\bm u}_k]) &= \frac{1}{2}\int_\Omega \left(\bm \varepsilon({\bm u}_k)-\mathcal{T}(\pf_k)\right)\!:\!\mathbb{C}(\pf_k)\left(\bm \varepsilon({\bm u}_k)-\mathcal{T}(\pf_k)\right) \;dx\\
&\geq \frac{c_\mathbb{C}}{2}\int_\Omega |\bm \varepsilon({\bm u}_k)-\mathcal{T}(\pf_k)|^2 \;dx\geq \frac{c_\mathbb{C}}{4}\|\bm \varepsilon({\bm u}_k)\|_{\tensSpL}^2 - \frac{c_\mathbb{C}C_{\mathcal{T}}^2}{2}\|\pf_k\|_{L^2(\Omega)}^2.
\end{align*} 
\end{proof}

\begin{prop}\label{prop:fluidenergybound}
The hydraulic energy is bounded from below by
 \begin{equation}
  \mathcal{E}_{\mathrm{f},\revision{k}}(\strain[{\bm u}_k],\theta_k) \geq C_\theta\|\theta_k\|_{L^2(\Omega)}^2 - \frac{c_\mathbb{C}}{8}\|\bm \varepsilon({\bm u}_k)\|_{\tensSpL}^2,
 \end{equation}
 where $C_\theta:=\frac{M}{2}\left(1-\frac{1}{2\left(\frac{c_\mathbb{C}}{8M\alpha^2d} + \frac{1}{2}\right)}\right)$.
\end{prop}
\begin{proof}
\revision{
By using the linearity of $\Pi_k^\theta$, the fact that $\Pi_k^\theta \theta_k = \theta_k$, and Young's inequality $2ab \leq \dfrac{a^2}{2\delta} + 2\delta b^2$ with $\delta  = \frac{c_\mathbb{C}}{8M\alpha^2d} + \frac{1}{2}$, we get
\begin{align*}
\mathcal{E}_{\mathrm{f},k}(\strain[{\bm u}_k],\theta_k) 
&= \int_\Omega \frac{M}{2} \left( \theta_k-\alpha \Pi_k^\theta \nabla\cdot\bm u_k \right)^2 \; dx 
\geq \frac{M}{2}\int_\Omega \left(1 - \frac{1}{2\delta}\right)\theta_k^2 - \alpha^2 \left(2\delta - 1\right) \left(\Pi_k^\theta \nabla\cdot\bm u_k\right)^2\;dx.
\end{align*}
For the last term on the right hand side, we note that $2\delta - 1>0$ and it holds
\begin{align*}
\int_\Omega \left(\Pi_k^\theta \nabla\cdot\bm u_k\right)^2\;dx \leq \int_\Omega \left(\nabla\cdot\bm u_k\right)^2\;dx \leq \int_\Omega d|\strain[{\bm u}_k]|^2\;dx.
\end{align*}
by definition of $\Pi_k^\theta$ and a standard AM-GM inequality. Thus, we get
\begin{align*}
\mathcal{E}_{\mathrm{f},k}(\strain[{\bm u}_k],\theta_k) \geq
C_\theta\|\theta\|_{L^2(\Omega)}^2 - \frac{c_\mathbb{C}}{8}\|\bm \varepsilon({\bm u}_k)\|_{\tensSpL}^2.
\end{align*}
}
\end{proof}

\revision{
\begin{corollary}\label{cor:freeenergybound}
The free energy is bounded from below by
\begin{equation}
 \mathcal{E}_k(\pf_k,\strain[{\bm u}_k],\theta_k) \geq \frac{\gamma}{\ell}\|\Psi(\pf_k)\|_{L^1(\Omega)} + \frac{\gamma \ell}{2} \|\nabla \pf_k\|_{\vecSpL}^2 - \frac{c_\mathbb{C}C_{\mathcal{T}}^2}{2}\|\pf_k\|_{L^2(\Omega)}^2 + \frac{c_\mathbb{C}}{8}\|\bm \varepsilon({\bm u}_k)\|_{\tensSpL}^2 + C_\theta\|\theta_k\|_{L^2(\Omega)}^2
\end{equation}
with constant $C_\theta$ as defined in Proposition~\ref{prop:fluidenergybound}.
\begin{proof}
For the regularized interface energy we have
\begin{align*}
 \mathcal{E}_\mathrm{i}(\pf_k) = \gamma \int_\Omega \frac{1}{\ell} \Psi(\pf_k) + \frac{\ell}{2} |\nabla\pf_k|^2\;dx = \frac{\gamma}{\ell}\|\Psi(\pf_k)\|_{L^1(\Omega)} + \frac{\gamma \ell}{2} \|\nabla \pf_k\|_{\vecSpL}^2
\end{align*}
employing the non-negativity of $\Psi(\cdot)$, cf.\ assumption (A1). Combining the latter with the energy bounds from Proposition~\ref{prop:elasticenergybound} and \ref{prop:fluidenergybound} then proves the assertion.
\end{proof}
\end{corollary}
}

\begin{lemma}\label{lemma:discrete-apriori}
For the discrete-in-space solution quintuple $(\pf_k, \mu_k, \bm u_k, \theta_k, \bm q_k)$ to \eqref{eq:discspace} we have the \textit{a priori} bound
\begin{equation}
\begin{aligned}
 \underset{t \in [0,T_k]}{\sup} \bigg[\Big(\|\Psi(\pf_k)\|_{L^1(\Omega)} &+ \|\pf_k\|_{H^1(\Omega)}^2 + \|\bm u_k\|_{\vecSpH}^2 + \|\theta_k\|_{L^2(\Omega)}^2\Big)(t)\bigg] \\ 
 &+\regularization \|\partial_t \nabla \cdot \bm{u}_k\|^2_{L^2([0,T];L^2(\Omega))}+\|\nabla\mu_k\|_{L^2([0,T_k];\vecSpL)}^2 + \|\bm q_k\|_{L^2([0,T_k];\vecSpL)}^2
 \leq C_T
\end{aligned}
\label{eq:bound}
\end{equation}
with positive constant $C_T$. The latter is not dependent on $T_k$, allowing us to extend the existence interval such that $T_k = T$ for all $k \in \mathbb{N}$. Furthermore, for the chemical potential the \textit{a priori} estimate
\begin{equation}
 \|\mu_k\|_{L^2\left([0,T];H^1(\Omega)\right)}^2 \leq C
\label{eq:boundmu}
\end{equation}
holds.
\end{lemma}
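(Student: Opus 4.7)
The plan is to derive the uniform bound by integrating the corollary identity~\eqref{eq:addedweakform} in time on $[0,t]\subset[0,T_k]$, extracting the desired norms on the left via Propositions~\ref{prop:elasticenergybound} and~\ref{propf:fluidenergybound}, controlling every right-hand side contribution by Young's inequality (absorbing the dissipative quantities), and closing the argument with Lemma~\ref{lemma:initial-data} together with Grönwall's inequality. Once the bound is shown to be independent of $T_k$, the local-in-time solution extends to $[0,T]$.

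For the left-hand side, combining Proposition~\ref{prop:elasticenergybound} and Proposition~\ref{propf:fluidenergybound} with the expression of $\mathcal{E}_\mathrm{i}$ yields
\begin{align*}
\mathcal{E}(\pf_k,\bm\varepsilon(\bm u_k),\theta_k)
\geq \tfrac{\gamma}{\ell}\|\Psi(\pf_k)\|_{L^1(\Omega)} + \tfrac{\gamma\ell}{2}\|\nabla\pf_k\|_{L^2(\Omega)}^2 + \tfrac{c_\mathbb{C}}{8}\|\bm\varepsilon(\bm u_k)\|_{\tensSpL}^2 - \tfrac{c_\mathbb{C} C_\mathcal{T}^2}{2}\|\pf_k\|_{L^2(\Omega)}^2 + C_\theta\|\theta_k\|_{L^2(\Omega)}^2.
\end{align*}
Choosing $K>c_\mathbb{C} C_\mathcal{T}^2$ absorbs the negative $\|\pf_k\|^2$ term thanks to the extra $\tfrac{K}{2}\|\pf_k\|^2$ in~\eqref{eq:addedweakform}, and Korn's inequality converts $\|\bm\varepsilon(\bm u_k)\|^2$ into $\|\bm u_k\|_{\vecSpH}^2$. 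All of the quantities appearing on the left of~\eqref{eq:bound} are then available.

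The right-hand side is estimated term by term. The forcing $\langle\bm f,\partial_t\bm u_k\rangle$ is problematic because the Kelvin-Voigt dissipation only controls $\|\partial_t\Div\bm u_k\|^2$, not the full $\bm H^1$-norm of $\partial_t\bm u_k$; I therefore integrate it by parts in time, using autonomy of $\bm f$ from (A5), obtaining $\langle\bm f,\bm u_k(t)-\bm u_{k,0}\rangle$, which is bounded by Young's inequality, Korn's inequality, and Lemma~\ref{lemma:initial-data}. The term $(S_f,\Pi_k^\theta\delta_\theta\mathcal{E}_\mathrm{f})$ uses $L^2$-stability of $\Pi_k^\theta$ together with $\delta_\theta\mathcal{E}_\mathrm{f}=M(\pf_k)(\theta_k-\alpha(\pf_k)\Div\bm u_k)$ and (A2$^\star$) to produce bounds of the form $\epsilon(\|\theta_k\|^2+\|\bm\varepsilon(\bm u_k)\|^2)+C_\epsilon$. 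The terms $K(R,\pf_k)$ and $-K(m(\pf_k)\nabla\mu_k,\nabla\pf_k)$ are handled by Cauchy-Schwarz and Young's; for the latter, the $\epsilon\|\nabla\mu_k\|^2$ part is absorbed into the dissipation on the left.

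The critical contribution is $(R,\mu_k)$, since dissipation only provides $\|\nabla\mu_k\|^2$. Splitting $\mu_k=(\mu_k-\mu_k^\Omega)+\mu_k^\Omega$, Poincaré-Wirtinger handles the zero-mean part. The mean $\mu_k^\Omega$ is recovered by testing~\eqref{eq:ch2discspace} with the constant basis function $\eta_1^\mathrm{ch}=|\Omega|^{-1/2}$, giving $|\Omega|^{1/2}\mu_k^\Omega = \big(\delta_\pf\mathcal{E}_\mathrm{i}+\delta_\pf\mathcal{E}_\mathrm{e}+\delta_\pf\mathcal{E}_\mathrm{f},\eta_1^\mathrm{ch}\big)$. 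Assumption~(A1), specifically~\eqref{eq:boundpsiprime}, bounds the interface part by $\|\Psi(\pf_k)\|_{L^1(\Omega)}+\|\pf_k\|^2$; assumptions~(A2$^\star$), (A3), (A4) bound the elastic and fluid parts by $1+\|\pf_k\|^2+\|\bm\varepsilon(\bm u_k)\|^2+\|\theta_k\|^2$. All of these are quantities already appearing on the left, so $(R,\mu_k^\Omega)$ is controlled via Young's by left-hand-side terms (with small prefactor) plus integrable remainders.

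After choosing $\epsilon$ small enough to absorb all $\epsilon$-terms on the left, the inequality takes the Grönwall form
\begin{align*}
u(t) + v(t) \leq a + \int_0^t b(s)\,u(s)\,ds,
\end{align*}
where $u$ collects the supremum-in-time quantities in~\eqref{eq:bound}, $v$ collects the time-integral dissipation quantities, $b$ is a bounded constant and $a$ absorbs initial data via Lemma~\ref{lemma:initial-data}. Lemma~\ref{lemma:Grönwall} then yields~\eqref{eq:bound} with $C_T$ independent of $T_k$, forcing $T_k=T$. The additional bound~\eqref{eq:boundmu} follows by combining the gradient bound in~\eqref{eq:bound} with the pointwise-in-time bound on $|\mu_k^\Omega|$ derived above, integrated over $[0,T]$. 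The main obstacle is precisely the handling of $\mu_k^\Omega$ and of $\langle\bm f,\partial_t\bm u_k\rangle$, both of which bypass the partial (Kelvin-Voigt and gradient) dissipation and rely respectively on assumption~(A1) and on autonomy of $\bm f$.
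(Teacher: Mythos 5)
Your proposal is correct and follows essentially the same route as the paper's proof: integrating the identity \eqref{eq:addedweakform}, recovering $\mu_k^\Omega$ by testing \eqref{eq:ch2discspace} with $\eta_1^\mathrm{ch}$ and bounding it via (A1)--(A4), rewriting $\langle\bm f,\partial_t\bm u_k\rangle=\partial_t\langle\bm f,\bm u_k\rangle$ by autonomy, using Propositions~\ref{prop:elasticenergybound} and~\ref{propf:fluidenergybound} with a choice of $K$ exceeding $c_\mathbb{C}C_\mathcal{T}^2$, and closing with Lemma~\ref{lemma:initial-data} and Grönwall. The only cosmetic difference is that the paper fixes $K=c_\mathbb{C}C_\mathcal{T}^2+\gamma\ell$ explicitly to produce a clean $H^1$-coefficient, which your strict inequality achieves equally well.
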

\begin{proof}
At first, we consider the right-hand side terms of \eqref{eq:addedweakform} for some $K$ to be determined later on. We proceed in chronological order. To start, we introduce the mean of the chemical potential $\mu_k^\Omega = \frac{1}{|\Omega|} \int_\Omega{\mu_k \;dx}$, yielding
\begin{align}
 \big( R,\mu_k \big) &= \big( R,\mu_k-\mu_k^\Omega \big) + \big( R,\mu_k^\Omega \big) \leq \frac{1}{4} \frac{C_P^2}{\lambda} C_R^2 + \lambda \|\nabla\mu_k\|_{\vecSpL}^2 + C_R |\Omega|^{\frac{1}{2}} \big|\mu_k^\Omega\big|.
 \label{eq:firstrhs}
\end{align}
where we used the Cauchy-Schwarz inequality, Young's inequality, the Poincar\'e-Wirtinger inequality with Poincar\'e constant $C_P(\Omega)$ and assumption (A5).
By the Cauchy-Schwarz inequality, Young's inequality and assumption (A5), for the second term we obtain 
\begin{align}
 K\big( R,\pf_k \big) \leq K \|R\|_{L^2(\Omega)} \|\pf_k\|_{L^2(\Omega)} \leq \frac{1}{4}K^2C_R^2 + \|\pf_k\|_{L^2(\Omega)}^2.
 \label{eq:secondrhs}
\end{align}
Next, employing the Cauchy-Schwarz inequality, Young's inequality and (A5) yield
\begin{align*}
 \big( S_f, \delta_\theta\energy_{\mathrm{f},\revision{k}}(\strain[u_k], \theta_k) \big) \leq \|S_f\|_{L^2(\Omega)} \|\delta_\theta\energy_{\mathrm{f},\revision{k}}( \strain[u_k], \theta_k)\|_{L^2(\Omega)} \leq \frac{1}{4} C_S^2 + \|\delta_\theta\energy_{\mathrm{f},\revision{k}}(\strain[u_k], \theta_k)\|_{L^2(\Omega)}^2.
\end{align*}
By \revision{the boundedness of the projection operator $\Pi_k^\theta$,} Minkowski's inequality, Young's inequality and assumption (A2), we get
\begin{align*}
 \|\delta_\theta\energy_{\mathrm{f},\revision{k}}(\strain[u_k], \theta_k)\|_{L^2(\Omega)}^2 &= \|M\revision{\Pi^\theta_k}(\theta_k-\alpha\nabla\cdot\bm u_k)\|_{L^2(\Omega)}^2 \leq M^2 \Big(\|\theta_k\|_{L^2(\Omega)} + \alpha \|\nabla\cdot\bm u_k\|_{L^2(\Omega)}\Big)^2 \nonumber \\
 &\leq 2 M^2 \|\theta_k\|_{L^2(\Omega)}^2 + 2 M^2 \alpha^2 d\|\strain[u_k]\|_{\tensSpL}^2,
\end{align*}
and, therefore, the bound
\begin{align}
 \big( S_f, \delta_\theta\energy_{\mathrm{f},\revision{k}}(\strain[u_k], \theta_k) \big)\leq \frac{1}{4} C_S^2 + 2 M^2 \|\theta_k\|_{L^2(\Omega)}^2 + 2 M^2 \alpha^2 d\|\strain[u_k]\|_{\tensSpL}^2.
 \label{eq:thirdrhs}
\end{align}
For the fourth term we employ Hölder's inequality, Young's inequality and assumption (A2) to get
\begin{align}
 -K\big( m(\pf_k)\nabla\mu_k,\nabla\pf_k \big) \leq K\|\nabla\mu_k\|_{\vecSpL} \|m(\pf_k)\nabla\pf_k\|_{\vecSpL} \leq \lambda \|\nabla\mu_k\|^2_{\vecSpL} + \frac{K^2 C_m^2}{4\lambda} \|\nabla\pf_k\|_{\vecSpL}^2.
 \label{eq:forthrhs}
\end{align}
For the last term we have $\langle \bm f,\partial_t\bm u_k\rangle = \partial_t \langle \bm f,\bm u_k \rangle$.
For the left-hand side terms of \eqref{eq:addedweakform}, using the lower bound for $m(\cdot)$ and the upper bound for $\kappa(\cdot)$, cf.\ assumption (A2), we get
\begin{align*}
 \|m(\pf_k)^{\frac{1}{2}}\nabla\mu_k\|_{\vecSpL}^2 \geq c_m \|\nabla\mu_k\|_{\vecSpL}^2,
\qquad
\textrm{and}
\qquad
 \|\kappa(\pf_k)^{-\frac{1}{2}}\bm q_k\|_{\vecSpL}^2 \geq \frac{1}{C_\kappa} \|\bm q_k\|_{\vecSpL}^2.
\end{align*}
Thus, by inserting the estimates \eqref{eq:firstrhs} through \eqref{eq:forthrhs} into \eqref{eq:addedweakform}, together with collecting the constant terms in $C_1$ we obtain
\begin{align}
 &\frac{d}{dt}\left(\mathcal{E}_{\revision{k}}(\pf_k, \strain[u_k], \theta_k) + K\frac{\|\pf_k\|_{L^2(\Omega)}^2}{2} - \langle \bm f,\bm u_k \rangle\right) 
 \nonumber\\
 &\qquad+ \regularization \|\partial_t \nabla \cdot \bm{u}_k \|_{L^2(\Omega)}^2 + \big(c_m - 2\lambda\big) \|\nabla\mu_k\|_{\vecSpL}^2 + \frac{1}{C_\kappa} \|\bm q_k\|_{\vecSpL}^2 
 \label{eq:estimatewithmean} \\
 &\hspace{0.225cm}\leq C_R |\Omega|^{\frac{1}{2}} |\mu_k^\Omega| + \|\pf_k\|_{L^2(\Omega)}^2 + \frac{K^2 C_m^2}{4\lambda} \|\nabla\pf_k\|_{\vecSpL}^2 + 2 M^2 \alpha^2 d\|\strain[u_k]\|_{\tensSpL}^2 + 2 M^2 \|\theta_k\|_{L^2(\Omega)}^2 + C_1.
 \nonumber
\end{align}
Next, we estimate the first right-hand side term of \eqref{eq:estimatewithmean}. Note that by the definition of the mean $\mu_k^\Omega$ and the first basis function $\eta_1^\textrm{ch} = \frac{1}{|\Omega|^{\frac{1}{2}}}$ we have
\begin{align*}
 |\Omega|^{\frac{1}{2}} |\mu_k^\Omega| = \frac{1}{|\Omega|^{\frac{1}{2}}} \left|\int_\Omega \mu_k \;dx\right| = \left|\int_\Omega \mu_k \frac{1}{|\Omega|^{\frac{1}{2}}} \;dx\right| = \left|\big( \mu_k,\eta_1^\textrm{ch} \big)\right|.
\end{align*}
By considering \eqref{eq:ch2discspace} for $\eta_1^{ch}$ with the substitution \eqref{eq:ch2discspacealter}, we obtain
\begin{align*}
 \left|\big( \mu_k,\eta_1^\textrm{ch} \big)\right| &= \frac{1}{|\Omega|^{\frac{1}{2}}} \left|\frac{\gamma}{\ell}\int_\Omega \Psi'(\pf_k) \;dx + \int_\Omega\delta_\varphi\mathcal{E}_\mathrm{e}(\varphi_k, \strain[u_k]) \;dx \right| \\
 &\leq \frac{1}{|\Omega|^{\frac{1}{2}}} \left(\frac{\gamma}{\ell}\int_\Omega |\Psi'(\pf_k)| \;dx + \int_\Omega \Big|\frac{1}{2}\left({\bm \varepsilon}(\bm u_k) - \mathcal{T}(\varphi_k)\right)\!:\!\mathbb{C}'(\varphi_k)\left({\bm \varepsilon}(\bm u_k) - \mathcal{T}(\varphi_k)\right) \Big| \;dx \right. \\
 &\hspace{2cm} + \left. \int_\Omega \big| \mathcal{T}'(\varphi_k)\!:\!\mathbb{C}(\pf_k)\left({\bm \varepsilon}(\bm u_k) - \mathcal{T}(\varphi_k)\right)\big| \;dx \right).
\end{align*}
We estimate the integral terms separately. Firstly, by assumption (A1) with \eqref{eq:boundpsiprime} we have
\begin{align*}
 \int_\Omega |\Psi'(\pf_k)| \;dx = \|\Psi'(\pf_k)\|_{L^1(\Omega)} \leq C_{\Psi}\left(\|\Psi(\pf_k)\|_{L^1(\Omega)} + \|\pf_k\|_{L^2(\Omega)}^2\right).
\end{align*}
Since $\mathbb{C}(\varphi)$ is Lipschitz continuous and differentiable (assumption (A3)), we can infer that $\mathbb{C}'(\varphi)$ is uniformly bounded by the Lipschitz constant $L_\mathbb{C}$. \revision{By assumption A4,} the same also holds for $\mathcal{T}(\pf)$ with respective Lipschitz constant $L_\mathcal{T}$. Thus, we get 
\begin{align*}
 \int_\Omega \Big|\frac{1}{2}\left({\bm \varepsilon}(\bm u_k) - \mathcal{T}(\varphi_k)\right)\!:\!\mathbb{C}'(\varphi_k)\left({\bm \varepsilon}(\bm u_k) - \mathcal{T}(\varphi_k)\right) \Big| \;dx &\leq \frac{1}{2} L_\mathbb{C} \|\strain[u_k] - \mathcal{T}(\pf_k)\|_{\tensSpL}^2 \\
 &\leq L_\mathbb{C} \|\strain[u_k]\|_{\tensSpL}^2 + L_\mathbb{C} C_{\mathcal{T}}^2 \|\pf_k\|_{L^2(\Omega)}^2,
\end{align*}
where we used Minkowski's inequality, Young's inequality and assumptions (A3) and (A4).
Then, by Hölder's inequality, assumptions (A3) and (A4), Young's inequality, and similar steps as above we obtain
\begin{align*}
 \int_\Omega \big|\mathcal{T}'(\varphi_k)\!:\!\mathbb{C}(\pf_k)\left({\bm \varepsilon}(\bm u_k) - \mathcal{T}(\varphi_k)\right)\big| \;dx &\leq L_\mathcal{T} C_\mathbb{C} \|\strain[u_k] - \mathcal{T}(\pf_k)\|_{\tensSpL} |\Omega|^{1/2} \\
 &\leq \|\strain[u_k]\|_{\tensSpL}^2 + C_{\mathcal{T}}^2\|\pf_k\|_{L^2(\Omega)}^2 + \underbrace{\frac{1}{2}L_\mathcal{T}^2 C_\mathbb{C}^2 |\Omega|}_{= C(L_\mathcal{T}, C_\mathbb{C}, |\Omega|) =: C_2}.
\end{align*}
Thus, we obtain the estimate
\begin{align}
 \left|\left( \mu_k,\eta_1^{ch} \right)\right| &\leq \frac{C_\Psi}{|\Omega|^{\frac{1}{2}}}\frac{\gamma}{\ell} \|\Psi(\pf_k)\|_{L^1(\Omega)} + \frac{1}{|\Omega|^{\frac{1}{2}}}\Big(\frac{\gamma}{\ell}C_\Psi + L_\mathbb{C} C_{\mathcal{T}}^2 + C_{\mathcal{T}}^2\Big) \|\pf_k\|_{L^2(\Omega)}^2 \nonumber \\
 &\hspace{1cm}+ \frac{1}{|\Omega|^{\frac{1}{2}}} \left(L_\mathbb{C} + 1\right) \|\strain[u_k]\|_{\tensSpL}^2 + C_2.
\label{eq:mubound}
\end{align}
After multiplying with $C_R$, we insert \eqref{eq:mubound} into \eqref{eq:estimatewithmean}, yielding
\begin{align*}
 \frac{d}{dt}&\left(\mathcal{E}_{\revision{k}}(\pf_k, \strain[u_k], \theta_k) + K\frac{\|\pf_k\|_{L^2(\Omega)}^2}{2} - \langle \bm f,\bm u_k \rangle\right) \\
 &\hspace{1cm}+ \regularization \|\partial_t \nabla \cdot \bm{u}_k \|_{L^2(\Omega)}^2+ \big(c_m - 2\lambda\big) \|\nabla\mu_k\|_{\vecSpL}^2 + \frac{1}{C_\kappa} \|\bm q_k\|_{\vecSpL}^2 \\
 &\leq C_R \frac{C_\Psi}{|\Omega|^{\frac{1}{2}}}\frac{\gamma}{\ell} \|\Psi(\pf_k)\|_{L^1(\Omega)} + \left[\frac{C_R}{|\Omega|^{\frac{1}{2}}}\Big(\frac{\gamma}{\ell}C_\Psi + L_\mathbb{C} C_{\mathcal{T}}^2 + C_{\mathcal{T}}^2\Big) + 1\right] \|\pf_k\|_{L^2(\Omega)}^2 + \frac{K^2 C_m^2}{4\lambda} \|\nabla\pf_k\|_{\vecSpL}^2 \\
 &\hspace{1cm}+ \left[\frac{C_R}{|\Omega|^{\frac{1}{2}}} \left(L_\mathbb{C} + 1 \right) + 2 M^2 \alpha^2 d\right] \|\strain[u_k]\|_{\tensSpL}^2+ 2 M^2 \|\theta_k\|_{L^2(\Omega)}^2 + C_R C_2 + C_1.
\end{align*}
We now choose $\lambda = \frac{c_m}{4}$ and define the constants
\begin{align*}
 &C_{\nabla\mu} := c_m - 2\lambda = \frac{c_m}{2}, \quad
 C_{\bm q} := \frac{1}{C_\kappa}, \\ 
 &\zeta_\Psi := C_R \frac{C_\Psi}{|\Omega|^{\frac{1}{2}}}\frac{\gamma}{\ell}, \quad 
 \zeta_\pf := \max\left\{\frac{C_R}{|\Omega|^{\frac{1}{2}}}\Big(\frac{\gamma}{\ell}C_\Psi + L_\mathbb{C} C_{\mathcal{T}}^2 + C_{\mathcal{T}}^2\Big) + 1, \frac{K^2 C_m^2}{4\lambda} \right\}, \\
 &\zeta_{\bm \varepsilon} := \frac{C_R}{|\Omega|^{\frac{1}{2}}} \left(L_\mathbb{C} + 1\right) + 2 M^2 \alpha^2 d , \quad 
 \zeta_\theta := 2 M^2,\\
 &\zeta := C_R C_2 + C_1 = C(C_R, L_\mathcal{T}, C_\mathbb{C}, |\Omega|, C_p, \lambda^{-1}, K, C_S).
\end{align*}
Integrating in time (from 0 to $t \in (0,T_k]$) along with applying the fundamental theorem of calculus yields
\begin{align*}
 \bigg(&\mathcal{E}_{\revision{k}}(\pf_k, \strain[u_k], \theta_k) + K\frac{\|\pf_k\|_{L^2(\Omega)}^2}{2} - \langle \bm f,\bm u_k \rangle\bigg)(t)\\
 &\qquad+ \regularization \|\partial_t \nabla \cdot \bm{u}_k \|_{L^2([0,t]; L^2(\Omega))}^2 + C_{\nabla\mu} \|\nabla\mu_k\|_{L^2([0,t];\vecSpL)}^2 + C_{\bm q} \|\bm q_k\|_{L^2([0,t];\vecSpL)}^2 \\
 &\leq \zeta_\Psi \|\Psi(\pf_k)\|_{L^1([0,t];L^1(\Omega))} + \zeta_\pf \|\pf_k\|_{L^2([0,t];H^1(\Omega))}^2 + \zeta_{\bm \varepsilon} \|\strain[u_k]\|_{L^2([0,t];\tensSpL)}^2 + \zeta_\theta \|\theta_k\|_{L^2([0,t];L^2(\Omega))}^2 \\
 &\qquad+ \zeta t + \bigg(\mathcal{E}_{\revision{k}}(\pf_k, \strain[u_k], \theta_k) + K\frac{\|\pf_k\|_{L^2(\Omega)}^2}{2} - \langle \bm f,\bm u_k \rangle\bigg)(0).
\end{align*}
Using the definition of the duality pairing, Young's inequality and Korn's inequality for time $t$, as well as assumption (A5) and Lemma~\ref{lemma:initial-displacement-bound} for time $t=0$, we further have
\begin{align*}
 \langle \bm f,\bm u_k \rangle(t) &\leq \|\bm f\|_{\vecSpHM} \|\bm u_k(t)\|_{\vecSpHTr} \leq \frac{1}{4 \delta} \|\bm f\|_{\vecSpHM}^2 + \delta\|\bm u_k(t)\|_{\vecSpH}^2 \leq \frac{1}{4 \delta} C_{\bm f}^2 + \delta C_K^2 \|\bm \varepsilon(\bm u_k(t))\|_{\tensSpL}^2, \\
 \langle \bm f, \bm u_{k,0} \rangle &\leq \|\bm f\|_{\vecSpHM} \|\bm u_{k,0}\|_{\vecSpHTr} \leq C_{\bm f} C_K \zeta_{\bm \varepsilon, 0} \big(\|\pf_{k,0}\|_{L^2(\Omega)} + \|\theta_{k,0}\|_{L^2(\Omega)} + 1\big)
\end{align*}
with $\delta > 0$ yet to be determined. For (later) fixed $\delta$, from Lemma~\ref{lemma:initial-data}, as well as assumption (A5), we can deduce a constant $\zeta_0$, in particular depending on $C_{\mathcal{E},0}$, satisfying
\begin{align*}
    \frac{1}{4 \delta} C_{\bm f}^2 + \bigg(\mathcal{E}_{\revision{k}}(\pf_k, \strain[u_k], \theta_k) + K\frac{\|\pf_k\|_{L^2(\Omega)}^2}{2} - \langle \bm f,\bm u_k \rangle\bigg)(0) \leq \zeta_0,
\end{align*}
We now choose $K = c_\mathbb{C}C_{\mathcal{T}}^2 + \gamma\ell$ and define the constants
\begin{align*}
 &C_\pf := \frac{\gamma\ell}{2}, \qquad C_{\bm \varepsilon} := \frac{c_{\mathbb{C}}}{8} - \delta C_K^2. 
\end{align*}
Now choosing $\delta < \frac{c_{\mathbb{C}}}{8C_K^2}$ yields $C_{\bm \varepsilon} > 0$.
Using on the left hand side the energy bound from \revision{Corollary~\ref{cor:freeenergybound}}, we obtain
\begin{align*}
 &\Big(\frac{\gamma}{\ell}\|\Psi(\pf_k)\|_{L^1(\Omega)} + C_\pf \|\pf_k\|_{H^1(\Omega)}^2 + C_{\bm \varepsilon}\|\bm \varepsilon(\bm u_k)\|_{\tensSpL}^2 + C_\theta \|\theta_k\|_{L^2(\Omega)}^2\Big)(t) \\ 
 &\hspace{0.5cm}+ \regularization \|\partial_t \nabla \cdot \bm{u}_k \|_{L^2([0,t]; L^2(\Omega))}^2+ C_{\nabla\mu} \|\nabla\mu_k\|_{L^2([0,t];\vecSpL)}^2 + C_{\bm q} \|\bm q_k\|_{L^2([0,t];\vecSpL)}^2 \\
 &\leq \zeta_\Psi \|\Psi(\pf_k)\|_{L^1([0,t];L^1(\Omega))} + \zeta_\pf \|\pf_k\|_{L^2([0,t];H^1(\Omega))}^2 + \zeta_{\bm \varepsilon} \|\strain[u_k]\|_{L^2([0,t];\tensSpL)}^2 + \zeta_\theta \|\theta_k\|_{L^2([0,t];L^2(\Omega))}^2 + \zeta t + \zeta_0.
\end{align*}
Employing Grönwall's inequality, cf.\ Lemma~\ref{lemma:Grönwall}, and Korn's inequality, we finally obtain
\begin{align*}
 \underset{t \in [0,T_k]}{\sup} \bigg[\Big(\|\Psi(\pf_k)\|_{L^1(\Omega)} &+ \|\pf_k\|_{H^1(\Omega)}^2 + \|\bm u_k\|_{\vecSpH}^2 + \|\theta_k\|_{L^2(\Omega)}^2\Big)(t)\bigg] \\
 &+ \regularization \|\partial_t \nabla \cdot \bm{u}_k \|_{L^2([0,T_k]; L^2(\Omega))}^2+ \|\nabla\mu_k\|_{L^2([0,T_k];\vecSpL)}^2 + \|\bm q_k\|_{L^2([0,T_k];\vecSpL)}^2
 \leq C_T,
\end{align*}
for a positive constant $C_T$ merely depending on material parameters and data. 
The constant $C_T$ on the right-hand side of the above estimate is not dependent on $T_k$, allowing us to extend the existence interval such that $T_k = T$ for all $k \in \mathbb{N}$ (cf.~\cite{Zeidler1986}, §3.3.\hspace{0.06cm}f.; \cite{Walter1998}, II.§7).
By $\|\bm \varepsilon(\bm u_k)\|_{\tensSpL}^2 \leq \|\bm u_k\|_{\vecSpH}^2$ we further get the uniform boundedness of the strain.

Moreover, for the chemical potential we have 
\begin{align*}
\|\mu_k\|_{H^1(\Omega)} \leq \|\mu_k - \mu_k^\Omega\|_{L^2(\Omega)} + \|\mu_k^\Omega\|_{L^2(\Omega)} + \|\nabla\mu_k\|_{\vecSpL} \leq \big(C_P + 1\big) \|\nabla\mu_k\|_{\vecSpL} + |\Omega|\left|\mu_k^\Omega\right|,
\end{align*}
where we used Minkowski's inequality and the Poincar\'e-Wirtinger inequality, and by \eqref{eq:bound} and \eqref{eq:mubound} we obtain
\begin{equation*}
 \|\mu_k\|_{L^2\left([0,T];H^1(\Omega)\right)}^2 \leq C.
\end{equation*}
\end{proof}

We conclude with \textit{a priori} estimates for the time derivatives.
\begin{lemma}\label{lemma:a-priori-dt}
Let $(\pf_k, \theta_k)$ be part of the discrete-in-space solution of~\eqref{eq:discspace}. For the time derivatives of $\pf_k$ and $\theta_k$ we have the \textit{a priori} estimates
\begin{align}
\label{eq:boundphi-theta}
 \|\partial_t \pf_k\|_{L^2([0,T];H^1(\Omega)')}^2 +  \|\partial_t \theta_k\|_{L^2([0,T];H^1(\Omega)')}^2 \leq C.
\end{align}
\end{lemma}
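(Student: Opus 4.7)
The plan is to bound $\|\partial_t\pf_k\|_{H^1(\Omega)'}$ and $\|\partial_t\theta_k\|_{H^1(\Omega)'}$ pointwise in time by quantities already controlled in $L^2([0,T])$ via Lemma~\ref{lemma:discrete-apriori}, and then integrate. The main tool is that $\partial_t\pf_k \in \mathcal{V}_k^{\mathrm{ch}}$ and $\partial_t\theta_k \in \mathcal{V}_k^{\theta}$ as time derivatives of functions lying in those finite-dimensional subspaces, so $L^2$-orthogonality of $\Pi_k^{\pf}$ and $\Pi_k^{\theta}$ reduces duality pairings against arbitrary $\eta\in H^1(\Omega)$ to pairings against the projections, which are themselves admissible test functions in the Galerkin system.

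Concretely, for $\eta\in H^1(\Omega)$ with $\|\eta\|_{H^1(\Omega)}\leq 1$, I first observe $\langle\partial_t\pf_k,\eta\rangle = \bigl(\partial_t\pf_k,\Pi_k^{\pf}\eta\bigr)$ by $L^2$-orthogonality. Writing $\Pi_k^{\pf}\eta$ as a linear combination of basis functions and invoking~\eqref{eq:ch1discspace} yields
\begin{equation*}
 \langle\partial_t\pf_k,\eta\rangle = \bigl(R,\Pi_k^{\pf}\eta\bigr) - \bigl(m(\pf_k)\nabla\mu_k,\nabla\Pi_k^{\pf}\eta\bigr).
\end{equation*}
Since the basis of $\mathcal{V}_k^{\mathrm{ch}}$ consists of eigenfunctions of the Neumann-Laplace operator, it is orthogonal in both $L^2(\Omega)$ and $H^1(\Omega)$, giving $\|\Pi_k^{\pf}\eta\|_{H^1(\Omega)}\leq \|\eta\|_{H^1(\Omega)}$. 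Combined with Cauchy-Schwarz and assumptions (A2) and (A5), this produces
\begin{equation*}
 \|\partial_t\pf_k\|_{H^1(\Omega)'} \leq C_R + C_m\|\nabla\mu_k\|_{\vecSpL},
\end{equation*}
so squaring, integrating on $[0,T]$, and inserting the bound \eqref{eq:boundmu} on $\|\nabla\mu_k\|_{L^2([0,T];\vecSpL)}$ from Lemma~\ref{lemma:discrete-apriori} gives the desired estimate on $\partial_t\pf_k$.

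For $\partial_t\theta_k$ the argument is analogous, starting from~\eqref{eq:flowdiscspace} tested with $\Pi_k^{\theta}\eta$. The one additional step is that the term $\bigl(\nabla\cdot\bm q_k,\Pi_k^{\theta}\eta\bigr)$ must be rewritten via integration by parts: since $\bm q_k = \sum e_i^k\bm\eta_i^{\bm q}\in H_0(\nabla\cdot,\Omega)$ has vanishing normal trace by construction, one obtains $\bigl(\nabla\cdot\bm q_k,\Pi_k^{\theta}\eta\bigr) = -\bigl(\bm q_k,\nabla\Pi_k^{\theta}\eta\bigr)$. Combining with $\mathcal{V}_k^{\theta}=\mathcal{V}_k^{\mathrm{ch}}$ and again the $H^1$-stability of $\Pi_k^{\theta}$ yields
\begin{equation*}
 \|\partial_t\theta_k\|_{H^1(\Omega)'} \leq C_S + \|\bm q_k\|_{\vecSpL},
\end{equation*}
and the bound on $\|\bm q_k\|_{L^2([0,T];\vecSpL)}$ from~\eqref{eq:bound} closes the estimate.

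The only delicate point is justifying the $H^1$-stability of the $L^2$-projections onto $\mathcal{V}_k^{\pf}$ and $\mathcal{V}_k^{\theta}$; this is not automatic for generic Galerkin spaces but follows immediately here because the spanning eigenfunctions of the Neumann-Laplace operator are simultaneously orthogonal in $L^2(\Omega)$ and $H^1(\Omega)$. Once that observation is in place, the rest is a bookkeeping exercise consisting of Cauchy-Schwarz, the assumed bounds on $m$ and the data, and an appeal to the already established Lemma~\ref{lemma:discrete-apriori}; no new compactness or Grönwall argument is needed.
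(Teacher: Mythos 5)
Your proposal is correct and follows essentially the same route as the paper: reduce the duality pairing against $g\in H^1(\Omega)$ to a pairing against $\Pi_k^\pf g$ (resp.\ $\Pi_k^\theta g$), substitute the Galerkin equations \eqref{eq:ch1discspace} and \eqref{eq:flowdiscspace}, integrate the $\nabla\cdot\bm q_k$ term by parts using the vanishing normal trace, invoke the $H^1$-stability of the $L^2$-projection, and close with Lemma~\ref{lemma:discrete-apriori}. Your explicit justification of the $H^1$-stability via the simultaneous $L^2$/$H^1$ orthogonality of the eigenbasis is a small sharpening of the paper, which simply asserts the bound with a constant $C_\Pi$.
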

\begin{proof}
The result follows almost immediately from previously established \textit{a priori} results, cf.\ Lemma~\ref{lemma:discrete-apriori}, and the definition of the discrete-in-space solution~\eqref{eq:discspace}. In order to utilize these, we apply the projections of $H^1(\Omega)$ onto the respective test spaces $\mathcal{V}_k^\mathrm{ch}$ and $\mathcal{V}_k^\theta$.
Let $g\in H^1(\Omega)$. Then, we have 
\begin{align*}
 \langle\partial_t\pf_k,g\rangle &= \Big\langle \partial_t \sum_{j=1}^k a_j^k(t)\eta^\mathrm{ch}_j,g \Big\rangle = \sum_{j=1}^k \big( \eta^\mathrm{ch}_j,g \big) \left(a_j^k\right)'(t) \overset{\eqref{eq:helperprojectionofphi}}{=} \sum_{j=1}^k \big( \eta^\mathrm{ch}_j,g \big) \langle \partial_t \pf_k, \eta^\mathrm{ch}_j \rangle 
 = \langle \partial_t \pf_k,\Pi_k^\mathrm{ch} g \rangle.
\end{align*}
Multiplying equation \eqref{eq:ch1discspace} by $\big( g, \eta^\mathrm{ch}_j \big)$, summing over $j=1,\ldots,k$ and taking the absolute value yields 
\begin{align*}
 \big|\langle \partial_t\pf_k,\Pi_k^\mathrm{ch} g\rangle\big| &= \big|-\big( m\nabla \mu_k,\nabla\Pi_k^\mathrm{ch} g\big) + \big( R,\Pi_k^\mathrm{ch} g\big)\big| \leq C_m \|\nabla\mu_k\|_{\vecSpL} \|\nabla\Pi_k^\mathrm{ch} g\|_{\vecSpL} + C_R \|\Pi_k^\mathrm{ch} g\|_{L^2(\Omega)} \\
 & \leq C_\Pi \max\{C_m, C_R\}\left(\|\nabla\mu_k\|_{\vecSpL} + 1 \right)\| g\|_{H^1(\Omega)},
\end{align*}
where we used the fact that $\|\Pi_k^\mathrm{ch} g\|_{H^1(\Omega)} \leq C_\Pi \|g\|_{H^1(\Omega)}$. Thus, by Lemma~\ref{lemma:discrete-apriori}, we obtain for some constant $C>0$ independent of $k$
\begin{align*}
 \|\partial_t \pf_k\|_{L^2([0,T];H^1(\Omega)')}^2 = \int_0^{T} \|\partial_t \pf_k\|_{H^1(\Omega)'}^2 \;dt =\int_0^T \underset{g \in H^1(\Omega)}{\sup} \frac{\big|\langle\partial_t\pf_k,g\rangle\big|}{\|g\|_{H^1(\Omega)}} \;dt \leq C.
\end{align*}
We can make a similar calculation based on equation \eqref{eq:flowdiscspace} to get
\begin{align*}
 \big|\langle \partial_t\theta_k,g \rangle\big| &= \big|\langle \partial_t\theta_k, \Pi_k^\theta g \rangle\big| = \big|- \big( \nabla\cdot \bm{q}_k,\Pi_k^\theta g \big) + \big( S_f,\Pi_k^\theta g \big)\big| \\
 &\leq \|\bm{q}_k\|_{\vecSpL} \|\nabla\Pi_k^\theta g\|_{\vecSpL} + C_S \|\Pi_k^\theta g\|_{L^2(\Omega)} \leq C_\Pi \max\{1, C_S\} \left(\|\bm q_k\|_{\vecSpL} + 1\right)\|g\|_{H^1(\Omega)}.
\end{align*}
Hence, we obtain in similar fashion a constant $C>0$ independent of $k$ satisfying 
\begin{align*}
 \|\partial_t \theta_k\|_{L^2([0,T];H^1(\Omega)')}^2 = \int_0^{T} \|\partial_t \theta_k\|_{H^1(\Omega)'}^2\;dt \leq C.
\end{align*}
\end{proof}

As an immediate consequence of Lemma~\ref{lemma:discrete-apriori} and Lemma~\ref{lemma:a-priori-dt} we obtain the following uniform bounds.

\begin{subequations}
\label{eq:uniformBounds}

\begin{corollary}[Uniform bounds] \label{cor:uniform_bounds}
For the discrete-in-space solution $(\pf_k, \mu_k, \bm u_k, \theta_k, \bm q_k)$ to \eqref{eq:discspace} we have that
\begin{align}
 &\pf_k \textrm{ is uniformly bounded in } L^\infty\big([0,T];H^1(\Omega)\big)\cap H^1\big([0,T];H^1(\Omega)'\big), \\
 &\mu_k \textrm{ is uniformly bounded in } L^2\big([0,T];H^1(\Omega)\big) ,\\
 &\bm u_k \textrm{ is uniformly bounded in } L^\infty\big([0,T];\vecSpHTr\big), \\
 &\theta_k \textrm{ is uniformly bounded in } L^\infty\big([0,T];L^2(\Omega)\big)\cap H^1\big([0,T];H^1(\Omega)'\big), \\
 &\bm q_k \textrm{ is uniformly bounded in } L^2\big([0,T];\vecSpL\big).
\end{align}
\end{corollary}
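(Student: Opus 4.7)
The plan is simply to collect and relabel the estimates already obtained in Lemmas~\ref{lemma:discrete-apriori} and~\ref{lemma:a-priori-dt}, recasting the pointwise-in-$t$ supremum and integral-in-$t$ bounds as Bochner-space norms. No new analysis is required; the work is entirely bookkeeping. First I would handle $\pf_k$: its $L^\infty([0,T];H^1(\Omega))$ control is exactly the essential-supremum bound on $\|\pf_k(t)\|_{H^1(\Omega)}^2$ appearing on the left-hand side of~\eqref{eq:bound}, while the $L^2([0,T];H^1(\Omega)')$ bound on $\partial_t\pf_k$ is the first summand in~\eqref{eq:boundphi-theta}; combining these yields membership in $H^1([0,T];H^1(\Omega)')$.

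Next, the chemical-potential bound in $L^2([0,T];H^1(\Omega))$ is precisely~\eqref{eq:boundmu}. For the displacement, note that by construction $\bm u_k \in \mathcal{V}_k^{\bm u} \subseteq \vecSpHTr$, so the zero trace is automatic and the $\|\bm u_k(t)\|_{\vecSpH}^2$ term on the left of~\eqref{eq:bound} delivers the desired $L^\infty([0,T];\vecSpHTr)$ control (with Korn's inequality already having been invoked). The volumetric fluid content $\theta_k$ inherits its $L^\infty([0,T];L^2(\Omega))$ bound from~\eqref{eq:bound} and its $H^1([0,T];H^1(\Omega)')$ bound from the second summand of~\eqref{eq:boundphi-theta}. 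Finally, the flux $\bm q_k$ in $L^2([0,T];\vecSpL)$ appears directly as a term in~\eqref{eq:bound}.

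The only mild obstacle is to justify that the pointwise supremum estimates in Lemma~\ref{lemma:discrete-apriori} are genuinely $L^\infty([0,T];X)$ bounds in the Bochner sense; this requires Bochner measurability of $t\mapsto \|\pf_k(t)\|_{H^1(\Omega)}$, $t\mapsto \|\bm u_k(t)\|_{\vecSpH}$, and $t\mapsto\|\theta_k(t)\|_{L^2(\Omega)}$. Since the discrete solutions are finite linear combinations of the fixed basis functions with coefficients in $C^1([0,T];\mathbb{R}^k)$ (guaranteed by the Peano-Cauchy step and the global-in-time extension), these maps are continuous in $t$ and thus measurable, so the essential supremum coincides with the $L^\infty$ norm and the identifications go through without further argument.
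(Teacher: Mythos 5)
Your proposal is correct and follows exactly the route the paper intends: the corollary is stated as an immediate consequence of Lemmas~\ref{lemma:discrete-apriori} and~\ref{lemma:a-priori-dt}, and your bookkeeping — matching each Bochner-norm claim to the corresponding term in \eqref{eq:bound}, \eqref{eq:boundmu} and \eqref{eq:boundphi-theta}, with the zero trace of $\bm u_k$ coming from $\mathcal{V}_k^{\bm u}\subseteq\vecSpHTr$ — is precisely that consequence. The additional remark on Bochner measurability via the $C^1$ regularity of the coefficient functions is a correct (if unstated in the paper) justification and does not alter the argument.
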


The above corollary holds without (A7), under which we, however, can derive even stronger regularity.

\begin{corollary}[Uniform bounds II] \label{cor:uniform_bounds-II} For fixed $\eta > 0$, $\bm u_k$, part of the solution to \eqref{eq:discspace}, satisfies
\begin{align}
\nabla \cdot \bm{u}_k\textrm{ is uniformly bounded in } H^1\big([0,T];L^2(\Omega)\big).
\end{align}
\end{corollary}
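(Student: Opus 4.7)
The plan is to observe that this corollary is essentially a direct repackaging of bounds that have already been established, exploiting the extra factor of $\eta > 0$ in front of $\|\partial_t \nabla \cdot \bm{u}_k\|^2_{L^2([0,T]; L^2(\Omega))}$ in the energy estimate~\eqref{eq:bound} of Lemma~\ref{lemma:discrete-apriori}. So the proof will have two short steps and no real obstacle, the regularisation assumption (A7) simply being the hypothesis that makes the bound informative.

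First, I would recall that from Lemma~\ref{lemma:discrete-apriori} we have $\eta \, \|\partial_t \nabla \cdot \bm u_k\|^2_{L^2([0,t]; L^2(\Omega))} \leq C_T$ uniformly in $k$, with $C_T$ depending only on the data and the material constants (in particular not on $k$ or $t \leq T$). Dividing through by the fixed, strictly positive constant $\eta$ yields the uniform $L^2([0,T]; L^2(\Omega))$-bound on $\partial_t \nabla \cdot \bm{u}_k$:
\begin{align*}
    \|\partial_t \nabla \cdot \bm u_k\|^2_{L^2([0,T]; L^2(\Omega))} \leq \eta^{-1} C_T.
\end{align*}
Note that the dependence on $\eta^{-1}$ is harmless here but is precisely the obstruction to passing to $\eta = 0$ later.

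Second, I would obtain the $L^2([0,T]; L^2(\Omega))$-bound on $\nabla \cdot \bm{u}_k$ itself. By Corollary~\ref{cor:uniform_bounds}, $\bm u_k$ is uniformly bounded in $L^\infty([0,T]; \vecSpHTr)$, and since the divergence operator is continuous from $\vecSpHTr$ to $L^2(\Omega)$ (with norm at most $\sqrt{d}$), we deduce $\nabla \cdot \bm u_k$ uniformly bounded in $L^\infty([0,T]; L^2(\Omega))$; continuity of the embedding $L^\infty([0,T]; L^2(\Omega)) \hookrightarrow L^2([0,T]; L^2(\Omega))$ then gives the desired $L^2$ bound in time. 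Combining this with the bound from the first step and using the definition of the Bochner space $H^1([0,T]; L^2(\Omega))$ recalled in the functional analysis preliminaries yields the stated uniform bound, concluding the proof.
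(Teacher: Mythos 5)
Your proposal is correct and matches the paper's reasoning: the paper states this corollary as an immediate consequence of the \textit{a priori} estimate \eqref{eq:bound} in Lemma~\ref{lemma:discrete-apriori}, which contains the term $\regularization \|\partial_t \nabla \cdot \bm{u}_k\|^2_{L^2([0,T];L^2(\Omega))} \leq C_T$, combined with the uniform $L^\infty([0,T];\vecSpHTr)$ bound on $\bm u_k$. Your two steps (dividing by the fixed $\eta>0$, then controlling $\nabla\cdot\bm u_k$ itself via the boundedness of the divergence operator) are exactly the intended argument.
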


\end{subequations}

\subsection{Compactness arguments and passing to the limit} 
\label{sec:passingtolimit}
We will now use standard compactness arguments to see that the solution sequence to the discrete-in-space problem \eqref{eq:discspace} converges weakly (up to subsequence). Then, we will prove that the resulting limit quintuple solves the system of equations in the sense of Definition~\ref{def:Weak solution} under the constraint of $\pf_0 \in H_{\bm n}^2(\Omega)$, cf.\ Remark~\ref{rem:assumption_initial_pf}. We further show the energy estimate~\eqref{eq:boundenergy} and complete the proof for general initial conditions, cf.\ assumption (A6).

An important aspect of the remaining existence analysis is 
that, from here on, we explicitly require (A7) to hold, allowing to exploit the regularizing effect of secondary consolidation, while the above calculations (and still the next corollary) also hold without 
this assumption.

\begin{corollary}[Weak convergence]\label{cor:weak_convergence}
In the limit $k \rightarrow \infty$, there exists a limit quintuple $(\pf, \mu, \bm u, \theta, \bm q)$ to the solution sequence $(\pf_k, \mu_k, \bm u_k, \theta_k, \bm q_k)$ to \eqref{eq:discspace} and the following weak convergence properties hold (up to subsequence)
\begin{subequations}
\begin{align}
 &\pf_k \rightarrow \pf \textrm{ weakly-$\ast$ in } L^\infty\big([0,T];H^1(\Omega)\big),
 \label{eq:weakConvergencePhi} \\
  &\pf_k \rightarrow \pf \textrm{ weakly in } H^1\big([0,T];H^1(\Omega)'\big),
 \label{eq:weakConvergencePhi-h1} \\
 &\mu_k \rightarrow \mu \textrm{ weakly in } L^2\big([0,T];H^1(\Omega)\big), \\
 &\bm u_k \rightarrow \bm u \textrm{ weakly-$\ast$ in } L^\infty\big([0,T];\vecSpHTr\big), 
 \label{eq:weakConvergenceDisp} \\
 &\theta_k \rightarrow \theta \textrm{ weakly-$\ast$ in } L^\infty\big([0,T];L^2(\Omega)\big), \label{eq:weakConvergenceTheta} \\
 &\theta_k \rightarrow \theta \textrm{ weakly in } H^1\big([0,T];H^1(\Omega)'\big),
 \label{eq:weakConvergenceTheta-h1} \\
 &\bm q_k \rightarrow \bm q \textrm{ weakly in } L^2\big([0,T];\vecSpL\big).
\end{align}
\label{eq:weakConvergence}%
\end{subequations}
\end{corollary}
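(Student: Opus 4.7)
The plan is to derive all the weak and weak-$\ast$ convergences in~\eqref{eq:weakConvergence} directly from the uniform bounds of Corollary~\ref{cor:uniform_bounds} by standard Banach space compactness arguments, and then reconcile the limits identified in different function spaces. No strong-compactness (Aubin--Lions type) results are needed here; those will only enter in the subsequent passage-to-the-limit step to treat the nonlinearities.

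First, I would extract weakly or weakly-$\ast$ convergent subsequences one variable at a time. The spaces $L^2([0,T];H^1(\Omega))$, $L^2([0,T];\vecSpL)$ and $H^1([0,T];H^1(\Omega)')$ are separable Hilbert spaces, hence reflexive, so bounded sequences admit weakly convergent subsequences (Eberlein--\v{S}mulian, or equivalently Banach--Alaoglu combined with reflexivity). This handles the convergences of $\mu_k$, $\bm q_k$, and of $\partial_t \pf_k$, $\partial_t \theta_k$. For the $L^\infty([0,T];X)$ spaces appearing for $\pf_k$, $\bm u_k$ and $\theta_k$, I would identify $L^\infty([0,T];X)$ as the topological dual of the separable Banach space $L^1([0,T];X')$, using reflexivity of $X \in \{H^1(\Omega), \vecSpHTr, L^2(\Omega)\}$, and apply Banach--Alaoglu to extract weakly-$\ast$ convergent subsequences. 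A finite diagonal extraction across the five variables then produces a single subsequence $k_\ell$ along which every convergence in~\eqref{eq:weakConvergence} holds simultaneously.

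Second, I would reconcile the limits that appear for the same variable in two different spaces. The sequence $\pf_k$ is claimed to converge to a common $\pf$ both weakly-$\ast$ in $L^\infty([0,T];H^1(\Omega))$ and weakly in $H^1([0,T];H^1(\Omega)')$, and analogously for $\theta_k$. This is a routine identification: both topologies imply convergence when tested against any $\psi \in C_c^\infty((0,T)\times \Omega)$, and the two weak limits must therefore agree as distributions, hence as Bochner measurable functions. The common limit is then denoted by $\pf$ (respectively $\theta$).

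The construction reduces to bookkeeping of successive subsequence extractions together with the duality identifications for the $L^\infty$ spaces, so I do not expect any genuine obstacle at this stage. For completeness I would also record, by the same reflexivity argument applied to Corollary~\ref{cor:uniform_bounds-II}, the weak convergence of $\partial_t (\nabla\cdot \bm u_k)$ in $L^2([0,T];L^2(\Omega))$, consistent with the regularity $\partial_t \nabla\cdot \bm u \in L^2([0,T];L^2(\Omega))$ required in Definition~\ref{def:Weak solution}; this additional convergence, although not listed explicitly in the corollary, will be indispensable for handling the viscoelastic term in the next step.
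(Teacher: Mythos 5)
Your proposal is correct and follows essentially the same route as the paper, which simply cites Corollary~\ref{cor:uniform_bounds} together with the Banach--Alaoglu and Eberlein--\v{S}mulian theorems; you merely spell out the reflexivity/duality identifications, the diagonal extraction, and the reconciliation of limits that the paper leaves implicit. The additional convergence of $\partial_t(\nabla\cdot\bm u_k)$ you record at the end is treated by the paper as a separate statement (Corollary~\ref{cor:weak_convergence-II}), but your argument for it is the same.
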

\begin{proof}
    This is a direct consequence of Corollary~\ref{cor:uniform_bounds} and standard compactness results in the form of the Banach-Alaoglu theorem (\cite{Conway1990}, p.\hspace{0.05cm}130\hspace{0.06cm}ff.)~and the Eberlein-\v{S}mulian theorem (\cite{Conway1990}, p.\hspace{0.05cm}163).
\end{proof}

Similarly, under assumption (A7), from Corollary~\ref{cor:uniform_bounds-II} it follows:
\begin{corollary}[Weak convergence II]\label{cor:weak_convergence-II}
In the limit $k \rightarrow \infty$, (up to a subsequence) it holds 
\begin{align}
 &\partial_t \nabla \cdot \bm u_k \rightarrow \partial_t \nabla \cdot \bm u \textrm{ weakly in } L^2\big([0,T];L^2(\Omega)\big). 
 \label{eq:weakConvergenceDisp-dt}
\end{align}
\end{corollary}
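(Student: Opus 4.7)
The plan is straightforward: use the $L^2$-in-time bound on $\partial_t \nabla \cdot \bm u_k$ coming from secondary consolidation (Corollary~\ref{cor:uniform_bounds-II}) to extract a weakly convergent subsequence via reflexivity, and then identify its limit with $\partial_t \nabla \cdot \bm u$ using the already-established weak-$\ast$ limit of $\bm u_k$ in~\eqref{eq:weakConvergenceDisp}.

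Concretely, I would first observe that under (A7), Corollary~\ref{cor:uniform_bounds-II} provides a uniform bound on $\{\partial_t \nabla \cdot \bm u_k\}$ in the reflexive Hilbert space $L^2([0,T];L^2(\Omega))$. The Banach-Alaoglu / Eberlein-\v{S}mulian theorem then yields a subsequence (not relabeled) and some $\xi \in L^2([0,T];L^2(\Omega))$ such that $\partial_t \nabla \cdot \bm u_k \rightharpoonup \xi$ weakly. To identify $\xi = \partial_t \nabla \cdot \bm u$, I would test against tensor-product functions $\phi(t)\psi(x)$ with $\phi \in C^\infty_c((0,T))$ and $\psi \in L^2(\Omega)$; the integration by parts
\[
 \int_0^T \big( \partial_t \nabla \cdot \bm u_k, \psi \big)\,\phi(t)\, dt = -\int_0^T \big( \nabla \cdot \bm u_k, \psi \big)\,\phi'(t)\, dt
\]
is legitimate at the discrete level since $\bm u_k \in C^1([0,T];\mathcal{V}_k^{\bm u})$ by construction. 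The left-hand side converges to $\int_0^T (\xi,\psi)\phi\,dt$ by the weak convergence just extracted. The right-hand side converges to $-\int_0^T(\nabla \cdot \bm u, \psi)\phi'\,dt$ because \eqref{eq:weakConvergenceDisp} implies $\nabla \cdot \bm u_k \to \nabla \cdot \bm u$ weakly-$\ast$ in $L^\infty([0,T];L^2(\Omega))$, which pairs in the limit with the $L^1$-in-time function $\phi'\psi$. Density of such tensor products in $L^2([0,T];L^2(\Omega))$ then characterizes $\xi$ uniquely as the distributional time derivative of $\nabla \cdot \bm u$, completing the identification.

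I do not anticipate any real obstacle here, as the argument is purely linear and relies on nothing beyond reflexivity plus a distributional identification of the limit. The single point worth emphasizing is that assumption (A7) is essential throughout: without $\eta > 0$, the bound on $\partial_t \nabla \cdot \bm u_k$ in Corollary~\ref{cor:uniform_bounds-II} is unavailable, and this corollary would fail in general — which is precisely why the existence analysis from this point onward relies on active secondary consolidation.
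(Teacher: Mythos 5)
Your proposal is correct and follows essentially the same route as the paper, which states the corollary without a written proof as a direct consequence of Corollary~\ref{cor:uniform_bounds-II} and the same Banach--Alaoglu/Eberlein--\v{S}mulian compactness argument used for Corollary~\ref{cor:weak_convergence}. Your explicit identification of the weak limit with $\partial_t \nabla \cdot \bm u$ via integration by parts against $\phi(t)\psi(x)$ and the weak-$\ast$ convergence~\eqref{eq:weakConvergenceDisp} is exactly the detail the paper leaves implicit, and your remark on the necessity of (A7) matches the paper's own emphasis.
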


To discuss the model equations in the limit, we furthermore require strong convergence properties.
\begin{lemma}\label{lemma:strong-convergence-phi}
There exist subsequences of $\left\{\pf_k\right\}_k$ with the following convergence properties
\begin{align}
 &\pf_k \rightarrow \pf \textrm{ strongly in } C\big([0,T];L^2(\Omega)\big) \textrm{ and a.e.~in } \Omega \times [0,T].
 \label{eq:convphi}
\end{align}
\end{lemma}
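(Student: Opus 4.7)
The plan is to invoke a standard Aubin--Lions--Simon type compactness argument tailored to obtain strong convergence in the space of continuous functions with values in $L^2(\Omega)$, rather than merely in $L^2([0,T];L^2(\Omega))$. The ingredients are all in place from Corollary~\ref{cor:uniform_bounds}: the sequence $\{\pf_k\}_k$ is uniformly bounded in $L^\infty([0,T];H^1(\Omega))$, while $\{\partial_t \pf_k\}_k$ is uniformly bounded in $L^2([0,T];H^1(\Omega)')$. Combined with the Gelfand triple $H^1(\Omega) \hookrightarrow\hookrightarrow L^2(\Omega) \hookrightarrow H^1(\Omega)'$, where the first embedding is compact by the Rellich--Kondrachov theorem, this is precisely the setup required.

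First, I would apply Simon's compactness lemma (see J.~Simon, \emph{Compact sets in the space $L^p(0,T;B)$}, 1987, Corollary~4 or Theorem~5), which asserts that any sequence bounded in $L^\infty([0,T];X)$ with time derivative bounded in $L^r([0,T];Y)$ for some $r>1$ is relatively compact in $C([0,T];B)$ whenever $X \hookrightarrow\hookrightarrow B \hookrightarrow Y$. With $X = H^1(\Omega)$, $B = L^2(\Omega)$, $Y = H^1(\Omega)'$ and $r=2$, extracting a subsequence yields strong convergence $\pf_k \to \tilde\pf$ in $C([0,T];L^2(\Omega))$ for some $\tilde\pf$.

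Next, I would identify $\tilde\pf = \pf$. By~\eqref{eq:weakConvergencePhi} (or~\eqref{eq:weakConvergencePhi-h1}) we already have $\pf_k \to \pf$ weakly in, say, $L^2([0,T];L^2(\Omega))$. Since strong convergence in $C([0,T];L^2(\Omega))$ implies strong (and hence weak) convergence in $L^2([0,T];L^2(\Omega))$, the uniqueness of weak limits gives $\tilde\pf = \pf$, establishing $\pf_k \to \pf$ strongly in $C([0,T];L^2(\Omega))$ along the extracted subsequence. Finally, strong convergence in $L^2(\Omega\times[0,T])$ implies, up to a further subsequence, pointwise a.e.~convergence on $\Omega\times[0,T]$ by a classical result (see e.g.~Brezis, \emph{Functional Analysis, Sobolev Spaces and PDEs}, Theorem~4.9), completing the assertion.

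There is no real obstacle here: the only delicate point is to use the Simon refinement instead of the classical Aubin--Lions statement, since the latter would only yield $L^2([0,T];L^2(\Omega))$ convergence, which is insufficient to evaluate the limit at the initial time $t=0$ (needed to match the initial condition $\pf(0)=\pf_0$ in the sense of Definition~\ref{def:Weak solution}). The strengthened $L^\infty$--in--time bound on $\pf_k$ in $H^1(\Omega)$, rather than merely $L^2$, is exactly what permits this upgrade.
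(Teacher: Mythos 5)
Your proposal is correct and follows essentially the same route as the paper, which likewise deduces the result from the uniform bounds of Corollary~\ref{cor:uniform_bounds} together with the Aubin--Lions--Simon compactness lemma (citing the same Corollary~4 of Simon). Your write-up simply makes explicit the choice of the triple $H^1(\Omega)\hookrightarrow\hookrightarrow L^2(\Omega)\hookrightarrow H^1(\Omega)'$, the identification of the limit with $\pf$, and the extraction of a further subsequence for a.e.~convergence, all of which the paper leaves implicit.
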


\begin{proof}
The result follows from application of Corollary~\ref{cor:weak_convergence} and the Aubin-Lions compactness lemma (\cite{Simon1986}, Corollary 4; \cite{Boyer2013}, Theorem II.5.16).
\end{proof}

\begin{lemma}\label{lemma:strong-convergence-theta}
There exists a subsequence of $\left\{\theta_k\right\}_k$ with the following convergence property
\begin{align}
 &\theta_k \rightarrow \theta \textrm{ strongly in } C\big([0,T];H^1(\Omega)'\big).
 \label{eq:convtheta}
\end{align}
\end{lemma}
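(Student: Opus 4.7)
The plan is to apply a Simon-type compactness result directly, using the uniform bounds already collected in Corollary~\ref{cor:uniform_bounds}. Recall that the sequence $\{\theta_k\}_k$ is uniformly bounded both in $L^\infty([0,T];L^2(\Omega))$ and in $H^1([0,T];H^1(\Omega)')$. These are exactly the two ingredients required for a strong Aubin--Lions-type statement delivering convergence in the space of continuous functions in time.

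The key embedding is $L^2(\Omega) \hookrightarrow\hookrightarrow H^1(\Omega)'$. This compactness follows from duality: the Rellich--Kondrachov theorem gives $H^1(\Omega) \hookrightarrow\hookrightarrow L^2(\Omega)$, and taking adjoints (together with the identification of $L^2(\Omega)$ with its dual) turns the compact inclusion into a compact embedding $L^2(\Omega) \hookrightarrow H^1(\Omega)'$. With this compact inclusion in hand, together with the trivial continuous embedding $H^1(\Omega)' \hookrightarrow H^1(\Omega)'$, the hypotheses of the variant of Simon's compactness theorem (e.g.~\cite{Simon1986}, Corollary~9; or~\cite{Boyer2013}, Theorem~II.5.16) applicable to $C([0,T];\cdot)$ are satisfied, since the time derivative is controlled in $L^r$ with $r=2>1$.

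The remaining step is to run the extraction. From the uniform bounds, Simon's theorem yields a subsequence (not relabeled) and a limit $\tilde\theta \in C([0,T];H^1(\Omega)')$ with $\theta_k \to \tilde\theta$ strongly in $C([0,T];H^1(\Omega)')$. Since strong convergence in $C([0,T];H^1(\Omega)')$ implies weak convergence in $L^2([0,T];H^1(\Omega)')$, this strong limit must coincide with the weak limit $\theta$ identified in Corollary~\ref{cor:weak_convergence} through~\eqref{eq:weakConvergenceTheta}--\eqref{eq:weakConvergenceTheta-h1}, by uniqueness of weak limits. This identifies $\tilde\theta = \theta$ and concludes the claim.

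There is no real obstacle here; the only subtle point is to cite the correct form of the compactness lemma, namely the one that upgrades the classical Aubin--Lions conclusion in $L^p([0,T];B)$ to $C([0,T];B)$ under the stronger assumption of uniform $L^\infty$-in-time bounds combined with $L^r$-in-time control of the derivative for some $r>1$; both conditions are in force thanks to Corollary~\ref{cor:uniform_bounds}.
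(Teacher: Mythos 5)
Your proposal is correct and follows essentially the same route as the paper, which (via the reference to Lemma~\ref{lemma:strong-convergence-phi}) simply invokes the uniform bounds of Corollary~\ref{cor:uniform_bounds} together with the Aubin--Lions--Simon compactness lemma, using the compact embedding $L^2(\Omega)\hookrightarrow\hookrightarrow H^1(\Omega)'$ and the $L^2$-in-time control of $\partial_t\theta_k$ to upgrade to convergence in $C([0,T];H^1(\Omega)')$. Your additional remarks on obtaining the compact embedding by duality from Rellich--Kondrachov and on identifying the limit with the weak limit from Corollary~\ref{cor:weak_convergence} are exactly the details the paper leaves implicit.
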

\begin{proof} 
Analogous to Lemma~\ref{lemma:strong-convergence-phi}.
\end{proof}

\revision{
\begin{lemma}\label{lemma:strong-convergence-pk}
For the projected discrete pore pressure $\pi_k$, defined in~\eqref{eq:porepressure_k}, we have (up to a subsequence)
\begin{align}
 \pi_k \rightarrow p = M(\theta - \alpha \nabla \cdot \bm u )\text{ strongly in }L^2([0,T]; L^2(\Omega))\text{ and a.e.\ in }\Omega \times [0,T].
\end{align}
\end{lemma}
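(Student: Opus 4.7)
The key idea is to exploit the fact that, under assumption (A2) with $M$ and $\alpha$ constants, the discrete pressure $p_k$ satisfies a parabolic equation obtained by combining the mass balance, Darcy's law, and the definition of $p_k$. Specifically, substituting $\theta_k = M^{-1}p_k + \alpha \nabla \cdot \bm u_k$ into the discrete mass balance~\eqref{eq:flowdiscspace}, and using the discrete Darcy equation~\eqref{eq:darcyflowdiscspace} (together with Lemma~\ref{lemma:flux-basis} to re-express $\nabla \cdot \bm q_k$ in terms of $\nabla p_k$ on the discrete space), one obtains, in a suitable discrete weak sense,
\begin{equation*}
    M^{-1} \partial_t p_k - \nabla \cdot \bigl(\kappa(\pf_k) \nabla p_k\bigr) = S_f - \alpha\, \partial_t \nabla \cdot \bm u_k.
\end{equation*}
The crucial feature is that the forcing on the right-hand side lies in $L^2([0,T]; L^2(\Omega))$ uniformly in $k$, since $\partial_t \nabla \cdot \bm u_k$ is uniformly bounded in $L^2([0,T]; L^2(\Omega))$ by Corollary~\ref{cor:uniform_bounds-II} -- this is precisely where the secondary consolidation regularization (A7) enters.

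Next, a standard energy estimate for this parabolic equation, i.e., testing with $p_k$ itself, together with (A2), (A5), and Gronwall's inequality, gives uniform bounds on $p_k$ in $L^\infty([0,T]; L^2(\Omega)) \cap L^2([0,T]; H^1(\Omega))$. Rearranging the equation and using (A2) then yields a uniform bound on $\partial_t p_k$ in $L^2([0,T]; H^1(\Omega)')$. An application of the Aubin-Lions compactness theorem along the chain $H^1(\Omega) \subset\subset L^2(\Omega) \subset H^1(\Omega)'$ delivers strong convergence of (a subsequence of) $p_k$ in $L^2([0,T]; L^2(\Omega))$, from which a.e.\ convergence in $\Omega \times [0,T]$ follows by extracting a further subsequence. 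The identification of the limit as $p = M(\theta - \alpha \nabla \cdot \bm u)$ is then immediate from the weak convergences of $\theta_k$ and $\nabla \cdot \bm u_k$ established in Corollary~\ref{cor:weak_convergence}.

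The main obstacle is upgrading from $L^2([0,T]; L^2(\Omega))$ to $C([0,T]; L^2(\Omega))$, since Aubin-Lions only delivers the former. The natural route is to obtain the extra bound $\partial_t p_k \in L^2([0,T]; L^2(\Omega))$ by testing the parabolic equation with $\partial_t p_k$; this produces the term $\tfrac{1}{2}\frac{d}{dt}(\kappa(\pf_k) \nabla p_k, \nabla p_k)$, which requires control of $\nabla p_{k,0}$ in $L^2(\Omega)$ and a careful treatment of $\partial_t \kappa(\pf_k)$ (handled via assumption (A2) that $\kappa$ depends on $\pf_k$ continuously together with the existing bounds). The initial data $\nabla p_{k,0}$ is controlled via the Euler-Lagrange condition associated with the minimization defining $\bm u_{k,0}$ in~\eqref{eq:initial-displacement}, which relates $\nabla p_{k,0}$ to $\bm f$ and to the elastic stresses at the initial state. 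Once $\partial_t p_k \in L^2([0,T]; L^2(\Omega))$ uniformly, we obtain $p_k \in H^1([0,T]; L^2(\Omega)) \hookrightarrow C([0,T]; L^2(\Omega))$ uniformly, and combining this with the strong $L^2([0,T]; L^2(\Omega))$ convergence via an Arzela-Ascoli / Simon-type compactness argument yields strong convergence in $C([0,T]; L^2(\Omega))$. The technical subtlety lies in handling the time derivative of $\kappa(\pf_k)$ rigorously and in ensuring that the compatibility of initial data from Remark~\ref{remark:initial-displacement} is strong enough to close the estimate.
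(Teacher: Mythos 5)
Your route differs from the paper's: you derive a parabolic equation for the pressure and run energy estimates on it, whereas the paper works with the projection $\pi_k := \Pi_k^\theta p_k$, bounds $\pi_k$ in $L^2([0,T];H^1(\Omega))\cap H^1([0,T];H^1(\Omega)')$ directly from the definition of $p_k$, the discrete Darcy equation, and Corollaries~\ref{cor:uniform_bounds}--\ref{cor:uniform_bounds-II}, applies Aubin--Lions to $\pi_k$, and then shows $\|p_k-\pi_k\|_{L^2([0,T];L^2(\Omega))}\to 0$ via Parseval. Up to the point where you obtain strong convergence in $L^2([0,T];L^2(\Omega))$ and a.e., your argument is morally sound and uses the same two essential ingredients (the $H^1$-in-time bound on $\nabla\cdot\bm u_k$ from (A7), and the Darcy equation to gain spatial regularity), but you must be careful that the discrete equations only hold against test functions in $\mathcal{V}_k^\theta$ and $\mathcal{V}_k^{\bm q}$: since $(\delta_\theta\energy_\mathrm{f},\nabla\cdot\bm\eta_j^{\bm q})=(\eta_j^\theta,\Pi_k^\theta p_k)$ by~\eqref{eq:mixedSpacesDarcy}, the discrete flux $\bm q_k$ only ``sees'' the projection $\pi_k$, so your parabolic equation holds for $\pi_k$, not for $p_k$ itself. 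This is fixable exactly as in the paper by adding the step $p_k-\pi_k\to 0$, but as written it is a gap.

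The genuine problem is your proposed upgrade to $C([0,T];L^2(\Omega))$. Testing the parabolic equation with $\partial_t p_k$ produces $\tfrac12\tfrac{d}{dt}(\kappa(\pf_k)\nabla p_k,\nabla p_k)$ and therefore requires (i) a uniform bound on $\|\nabla p_{k,0}\|_{\vecSpL}$ and (ii) control of the time derivative of $\kappa(\pf_k)$. Neither is available: under (A6) the initial volume content satisfies only $\theta_0\in L^2(\Omega)$, so $\nabla\theta_{k,0}=\nabla\Pi_k^\theta\theta_0$ is not uniformly bounded in $L^2(\Omega)$ and the Euler--Lagrange condition for $\bm u_{k,0}$ cannot rescue this (it controls $\bm\varepsilon(\bm u_{k,0})$, not $\nabla\nabla\cdot\bm u_{k,0}$ or $\nabla\theta_{k,0}$); and (A2) assumes $\kappa$ merely continuous and bounded, so $\partial_t\kappa(\pf_k)$ is not defined, and even for Lipschitz $\kappa$ one would need to pair $\kappa'(\pf_k)\partial_t\pf_k$ with $|\nabla p_k|^2$, while $\partial_t\pf_k$ is only controlled in $L^2([0,T];H^1(\Omega)')$. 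So this step fails. Note, however, that the paper's own proof also establishes only strong convergence in $L^2([0,T];L^2(\Omega))$ together with a.e.\ convergence (the $C([0,T];L^2(\Omega))$ claim in the statement is not what the displayed argument delivers), and only the $L^2$-in-time convergence is actually used afterwards, in Lemma~\ref{lemma:strong-convergence-u}. You should therefore stop at the $L^2([0,T];L^2(\Omega))$ conclusion rather than attempt the $H^1$-in-time estimate for the pressure.
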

}

\begin{proof}

\revision{
To show strong convergence, the goal is to employ the Aubin-Lions compactness lemma.
By definition of $\Pi_k^\theta$ it holds for all fixed $i\geq 1$ that
\begin{align*}
    \underset{k\rightarrow \infty}{\mathrm{lim}}\int_0^T \left( \pi_k, \eta_i^{\theta} \right)\; dt =
    \underset{k\rightarrow \infty}{\mathrm{lim}}\int_0^T \left( p_k, \eta_i^{\theta} \right)\; dt 
    = \int_0^T \left(p, \eta_i^{\theta} \right) \; dt
\end{align*}
for $p_k = M(\theta_k - \alpha \nabla \cdot \bm u_k)$, following from Corollary~\ref{cor:weak_convergence}. And since $\Span\{\eta_i^{\theta}\}_{i \in \mathbb{N}}$ is dense in $L^2(\Omega)$, it follows
}
\begin{align}
\label{eq:pi_k_weak_convergence}
 \pi_k \rightarrow p\text{ weakly in }L^2([0,T]; L^2(\Omega)). 
\end{align}
After applying a triangle inequality, Corollary~\ref{cor:uniform_bounds} and Corollary~\ref{cor:uniform_bounds-II} yield a uniform bound for $\partial_t  \pi_k$
\begin{align}
\nonumber
\| \pi_k \|_{H^1([0,T]; H^1(\Omega)')}^2
&=
 \left\| M(\theta_k - \alpha \Pi_k^\theta \nabla \cdot \bm u_k) \right\|_{H^1([0,T]; H^1(\Omega)')}^2 \\
 \label{eq:partial_t_pi_k}
 &\leq 
 \revision{2}M^2 \left\|\theta_k \right\|_{H^1([0,T]; H^1(\Omega)')}^2 + \revision{2}(M\alpha)^2 \left\| \nabla \cdot \bm u_k \right\|_{H^1([0,T]; L^2(\Omega))}^2,
\end{align}
where we used the linearity of $\Pi_k^\theta$ as well as the continuity property $\| \Pi_k^\theta \phi \|_{L^2(\Omega)} \leq \| \phi \|_{L^2(\Omega)}$ for any $\phi \in L^2(\Omega)$.
Utilizing the definition of the projection $\Pi_k^\theta$, we obtain
\begin{align*}
 \| \pi_k \|_{L^2([0,T]; L^2(\Omega))}^2 = \int_0^T \left( \pi_k, p_k\right) \; dt,
\end{align*}
yielding the uniform bound utilizing the Cauchy-Schwarz inequality and  the weak convergence of $\{p_k\}_k$
\begin{align}
\label{eq:pi_k}
 \| \pi_k \|_{L^2([0,T]; L^2(\Omega))} \leq \| p_k \|_{L^2([0,T]; L^2(\Omega))}.
\end{align}
Employing the fact that $\pi_k \in H^2_{\bm{n}}(\Omega)$ and $\nabla \pi_k \in \mathcal{V}_k^{\bm q}$, cf.\ Lemma~\ref{lemma:flux-basis}, we obtain by employing~\eqref{eq:darcyflowdiscspace}
\begin{align*}
 \| \nabla \pi_k \|_{L^2([0,T]; L^2(\Omega))}^2 = -\int_0^T \left(\pi_k, \nabla \cdot \nabla \pi_k\right) \; dt = -\int_0^T \left(\kappa(\pf_k)^{-1} \bm{q}_k, \nabla \pi_k \right) \; dt.
\end{align*}
Thus, the Cauchy-Schwarz inequality yields
\begin{align}
\label{eq:grad_pi_k}
 \| \nabla \pi_k \|_{L^2([0,T]; L^2(\Omega))} \leq \left\|\kappa(\pf_k)^{-1} \bm{q}_k \right\|_{L^2([0,T]; L^2(\Omega))}.
\end{align}
Overall, combining~\eqref{eq:partial_t_pi_k}--\eqref{eq:grad_pi_k} and utilizing uniform bounds, cf.\ Corollary~\ref{cor:uniform_bounds}, Corollary~\ref{cor:uniform_bounds-II} and~\eqref{eq:pi_k_weak_convergence}, 
\begin{align*}
\pi_k\text{ is uniformly bounded in }H^1\big([0,T]; H^1(\Omega)'\big) \cap L^2\big([0,T]; H^1(\Omega)\big).
\end{align*}
Consequently, the Aubin-Lions compactness lemma allows to deduce
\begin{align*}
\pi_k \rightarrow p\text{ strongly in }L^2\big([0,T]; L^2(\Omega)\big),
\end{align*}
where the limit coincides with $p$, following from~\eqref{eq:pi_k_weak_convergence}.
\end{proof}

The last lemma is crucial to deduce strong convergence for the displacement approximations, for which we require convergence of initial data.

\begin{lemma}\label{lemma:convergence-initial-data}
The discrete approximations of the initial data, cf.\ Section~\ref{sec:disc-in-space}, satisfy
\begin{alignat*}{2}
    \pf_{k,0} &\rightarrow \pf_0 &\ &\text{ in }L^2(\Omega),\\
    \theta_{k,0} &\rightarrow \theta_0 &\ &\text{ in }L^2(\Omega),\\
    \bm{u}_{k,0} &\rightarrow \bm{u}_0 &\ &\text{ in }\vecSpHTr.
\end{alignat*}
\end{lemma}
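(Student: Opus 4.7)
The plan is to establish the three convergences in turn, of which the first two are straightforward and the third requires a Galerkin-plus-data-convergence argument for a linear elliptic problem. For the $L^2$-convergence of $\pf_{k,0} = \Pi_k^{\pf} \pf_0$ and $\theta_{k,0} = \Pi_k^\theta \theta_0$, I would rely on the fact that the bases $\{\eta_i^{\mathrm{ch}}\}_{i \ge 1}$ and $\{\eta_i^\theta\}_{i \ge 1}$ each consist of $\eta_1 = |\Omega|^{-1/2}$ together with the Neumann-Laplacian eigenfunctions on $L_0^2(\Omega)$, forming complete orthonormal systems of $L^2(\Omega)$—a density already exploited in the proof of Lemma~\ref{lemma:strong-convergence-pk}. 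Parseval's identity then yields $\pf_{k,0} \to \pf_0$ and $\theta_{k,0} \to \theta_0$ strongly in $L^2(\Omega)$.

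The main effort goes into the convergence of $\bm u_{k,0}$ in $\vecSpHTr$. The first-order optimality conditions for \eqref{eq:initial-displacement}, already derived in the proof of Lemma~\ref{lemma:initial-displacement-bound}, show that $\bm u_{k,0}$ is the unique solution in $\mathcal{V}_k^{\bm u}$ of a linear variational problem
\begin{align*}
 a_k(\bm u_{k,0}, \bm v) = F_k(\bm v) \quad \textrm{for all } \bm v \in \mathcal{V}_k^{\bm u},
\end{align*}
with symmetric, continuous bilinear form
\begin{align*}
 a_k(\bm u, \bm v) := \bigl(\mathbb{C}(\pf_{k,0}) \strain[u], \strain(\bm v)\bigr) + M \alpha^2 (\nabla\cdot\bm u, \nabla\cdot\bm v)
\end{align*}
and right-hand side $F_k(\bm v) := \langle \bm f, \bm v\rangle + \bigl(\mathbb{C}(\pf_{k,0})\mathcal{T}(\pf_{k,0}), \strain(\bm v)\bigr) + M\alpha (\theta_{k,0}, \nabla\cdot\bm v)$, while $\bm u_0$ satisfies the analogous continuous equation $a(\bm u_0, \bm v) = F(\bm v)$ for all $\bm v \in \vecSpHTr$ obtained by replacing $\pf_{k,0}, \theta_{k,0}$ by their limits. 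Coercivity uniform in $k$ follows from (A2), (A3), and Korn's inequality. I would then choose an approximating sequence $\bm w_k \in \mathcal{V}_k^{\bm u}$ with $\bm w_k \to \bm u_0$ strongly in $\vecSpHTr$, which exists because $\{\bm \eta_i^{\bm u}\}_{i \ge 1}$, the Dirichlet-Laplacian eigenfunctions, span a dense subspace of $\vecSpHTr$. Testing with $\bm u_{k,0} - \bm w_k \in \mathcal{V}_k^{\bm u}$ and using uniform coercivity yields
\begin{align*}
 c \,\|\bm u_{k,0} - \bm w_k\|_{\vecSpHTr}^2 \leq F_k(\bm u_{k,0} - \bm w_k) - a_k(\bm w_k, \bm u_{k,0} - \bm w_k).
\end{align*}
Rewriting the right-hand side as $[F_k - F](\bm u_{k,0} - \bm w_k) + [a(\bm u_0, \cdot) - a_k(\bm w_k, \cdot)](\bm u_{k,0} - \bm w_k)$ using the continuous equation, it suffices to show that both bracketed terms vanish in the $\vecSpHTr$-dual norm as $k \to \infty$.

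The hard part will be this last limit, since the bilinear form itself varies with $k$ through the nonlinear coefficient $\mathbb{C}(\pf_{k,0})$. The argument I propose is the following: from $\pf_{k,0} \to \pf_0$ in $L^2(\Omega)$, a.e.\ convergence holds along a subsequence, and Lipschitz continuity together with the uniform bounds on $\mathbb{C}$ and $\mathcal{T}$ from (A3)--(A4) yield $\mathbb{C}(\pf_{k,0}) \to \mathbb{C}(\pf_0)$ and $\mathcal{T}(\pf_{k,0}) \to \mathcal{T}(\pf_0)$ strongly in the appropriate $L^2$-tensor norm by dominated convergence. Combined with $\theta_{k,0} \to \theta_0$ in $L^2(\Omega)$ and $\bm w_k \to \bm u_0$ in $\vecSpHTr$, Hölder-type estimates then show that $F_k \to F$ and $a_k(\bm w_k, \cdot) \to a(\bm u_0, \cdot)$ in the dual of $\vecSpHTr$. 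Hence $\bm u_{k,0} - \bm w_k \to 0$ in $\vecSpHTr$, and $\bm u_{k,0} \to \bm u_0$ follows by the triangle inequality. This is essentially the same limiting pattern already used to pass nonlinear terms to the limit in the continuous system earlier in Section~\ref{sec:passingtolimit}.
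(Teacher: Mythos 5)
Your proof is correct and follows essentially the same route as the paper: the first two convergences are exactly the strong convergence of the $L^2$-projections onto the span of a complete orthonormal system (the paper cites Brezis, Corollary 5.10, where you invoke Parseval), and for the displacement the paper merely asserts that the ``strong convergences and continuous character of the definition of $\bm u_{k,0}$'' yield the result. Your C\'ea-type Galerkin argument --- uniform coercivity from (A2), (A3) and Korn, a fixed approximating sequence $\bm w_k \rightarrow \bm u_0$ in $\vecSpHTr$, and dominated convergence (via a.e.\ convergence along subsequences) for the $\pf$-dependent coefficients $\mathbb{C}(\pf_{k,0})$ and $\mathcal{T}(\pf_{k,0})$ --- is precisely the standard justification of that one-line assertion and contains no gaps.
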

\begin{proof}
The strong convergence of $\pf_k$ to $\pf$ in $C\big([0,T];L^2(\Omega)\big)$, cf.\ Lemma~\ref{lemma:strong-convergence-phi} implies $\pf_k(0) \rightarrow \pf(0)$ in $L^2(\Omega)$ for $k \rightarrow \infty$. Moreover, we obtain the strong convergence of $\pf_{k,0} = \Pi_k^\mathrm{ch} \pf_0$ to $\pf_0$ in $L^2(\Omega)$ (\cite{Brezis2011}, Corollary 5.10), that is $\pf_k(0) \rightarrow \pf_0$ in $L^2(\Omega)$ for $k \rightarrow \infty$. Since the limit is unique, we get the meaningful initial condition $\pf(0) = \pf_0$. For $\theta$, we have strong convergence of $\theta_k$ to $\theta$ in $C\big([0,T];H^1(\Omega)'\big)$, cf.\ Lemma~\ref{lemma:strong-convergence-theta}, which implies $\theta(0) \in H^1(\Omega)'$. Moreover, analogously as for $\pf$, we obtain the strong convergence of $\theta_{k,0} = \Pi_k^\theta \theta_0$ to $\theta_0$ in $L^2(\Omega)$, that is $\theta_k(0) \rightarrow \theta_0$ in $L^2(\Omega)$ for $k \rightarrow \infty$. Together with the componentwise continuity of the given duality pairing, we have for any arbitrary $g \in H^1(\Omega)$
\begin{align*}
 \left\langle \theta(0),g \right\rangle = \lim_{k \rightarrow \infty} \left\langle \theta_k(0),g \right\rangle = \left\langle \lim_{k \rightarrow \infty} \theta_k(0),g \right\rangle = \left\langle \theta_0,g \right\rangle.
\end{align*}
The strong convergences and continuous character of the definition of $\bm u_{k,0}$ allow to immediately deduce that $\bm{u}_{k,0} \rightarrow \bm{u}_0$ in $\vecSpHTr$, concluding the proof.
\end{proof}

\begin{lemma}\label{lemma:strong-convergence-u} 
There exists a strongly convergent subsequence of $\left\{\bm u_k\right\}_k$ such that
\begin{align}
 &\strain[u_k] \rightarrow \strain \textrm{ strongly in } L^2\big([0,T];\tensSpL\big) \textrm{ and a.e.~in } \Omega \times [0,T].
 \label{eq:convstrain}
\end{align}
\end{lemma}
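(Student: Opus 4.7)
The plan is to lift the already-available weak convergence of $\strain[u_k]$ to strong convergence by establishing norm convergence of the $\mathbb{C}(\pf_k)$-weighted seminorm, exploiting the uniform ellipticity of $\mathbb{C}$ from (A3). Concretely, I would sandwich $\int_0^T (\mathbb{C}(\pf_k)\strain[u_k],\strain[u_k])\,dt$ between a $\limsup$ bound derived from the discrete elasticity equation and a $\liminf$ bound from weak lower semicontinuity, obtain equality, and then deduce strong convergence by expanding the coercivity estimate.

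First, I would test \eqref{eq:elasticitydiscspace} against $\bm u_k(t)\in\mathcal{V}_k^{\bm u}$ (multiplying by $c_j^k(t)$ and summing) and integrate over $[0,T]$. Using (A2), under which $M$ and $\alpha$ are constant so that $\delta_{\bm\varepsilon}\energy_\mathrm{f}=-\alpha p_k \bm I$, this produces the identity
\begin{align*}
\tfrac{\regularization}{2}\|\nabla\cdot\bm u_k(T)\|_{L^2(\Omega)}^2 &+ \int_0^T (\mathbb{C}(\pf_k)\strain[u_k],\strain[u_k])\,dt \\
&= \tfrac{\regularization}{2}\|\nabla\cdot\bm u_{k,0}\|_{L^2(\Omega)}^2 + \int_0^T \Big[\langle\bm f,\bm u_k\rangle + \big(\mathbb{C}(\pf_k)\mathcal{T}(\pf_k),\strain[u_k]\big) + \alpha\big(p_k,\nabla\cdot\bm u_k\big)\Big]dt.
\end{align*}
In the limit $k\to\infty$, the $\bm f$-term converges by weak convergence of $\bm u_k$; the $\mathcal T$-term by the strong convergence $\mathbb{C}(\pf_k)\mathcal{T}(\pf_k)\to\mathbb{C}(\pf)\mathcal{T}(\pf)$ in $L^2$ (Lemma~\ref{lemma:strong-convergence-phi} combined with dominated convergence under (A3),(A4)) paired with weak convergence of $\strain[u_k]$; the pressure term by the crucial strong convergence of $p_k$ from Lemma~\ref{lemma:strong-convergence-pk} paired with weak convergence of $\nabla\cdot\bm u_k$; and the initial term by Lemma~\ref{lemma:convergence-initial-data}. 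On the left, (A7) together with Corollary~\ref{cor:uniform_bounds-II} gives $\nabla\cdot\bm u_k\in H^1([0,T];L^2(\Omega))\hookrightarrow C([0,T];L^2(\Omega))$, hence weak lower semicontinuity at $t=T$ yields $\|\nabla\cdot\bm u(T)\|^2\leq\liminf_k\|\nabla\cdot\bm u_k(T)\|^2$.

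Next, I would identify the continuous limit of \eqref{eq:elasticitydiscspace} by taking arbitrary fixed test functions (where weak convergence of $\strain[u_k]$ combined with the already-available strong convergences of $\pf_k$ and $p_k$ suffices), and then test the limit equation against $\bm u$ itself to obtain the analogous continuous identity. Comparing with the discrete identity above delivers
\begin{align*}
\limsup_{k\to\infty}\int_0^T (\mathbb{C}(\pf_k)\strain[u_k],\strain[u_k])\,dt \leq \int_0^T (\mathbb{C}(\pf)\strain[u],\strain[u])\,dt.
\end{align*}
The reverse $\liminf$ inequality follows from weak lower semicontinuity: for any $\psi\in L^2([0,T];\tensSpL)$, dominated convergence yields $\mathbb{C}(\pf_k)^{1/2}\psi\to\mathbb{C}(\pf)^{1/2}\psi$ strongly, and paired with weak convergence of $\strain[u_k]$ gives $\mathbb{C}(\pf_k)^{1/2}\strain[u_k]\rightharpoonup\mathbb{C}(\pf)^{1/2}\strain[u]$ weakly, so the squared $L^2$-norm is lower semicontinuous. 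Thus norm convergence of the $\mathbb{C}(\pf_k)$-weighted seminorm holds.

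Finally, using (A3) in the form $c_{\mathbb{C}}\|\strain[u_k]-\strain[u]\|_{\tensSpL}^2 \leq \big(\mathbb{C}(\pf_k)(\strain[u_k]-\strain[u]),\strain[u_k]-\strain[u]\big)$ and expanding, both cross terms $\int_0^T\big(\mathbb{C}(\pf_k)\strain[u_k],\strain[u]\big)dt$ and $\int_0^T\big(\mathbb{C}(\pf_k)\strain[u],\strain[u]\big)dt$ converge (via dominated convergence and weak-strong pairing) to $\int_0^T(\mathbb{C}(\pf)\strain[u],\strain[u])dt$, cancelling against the diagonal term to give $\strain[u_k]\to\strain[u]$ strongly in $L^2([0,T];\tensSpL)$; a further subsequence converges a.e.~in $\Omega\times[0,T]$. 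The principal obstacle is the passage to the limit in the coupling $\alpha(p_k,\nabla\cdot\bm u_k)$ where neither factor converges strongly in the natural topology; this is precisely what the strong pressure convergence of Lemma~\ref{lemma:strong-convergence-pk} provides, at the cost of requiring both (A2) (so that $p_k$ inherits the regularity of $\theta_k$ and $\nabla\cdot\bm u_k$) and (A7) (so that $\nabla\cdot\bm u_k$ has the $H^1$-in-time regularity needed for the Aubin-Lions argument underpinning Lemma~\ref{lemma:strong-convergence-pk}).
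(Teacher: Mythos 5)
Your argument is correct, and it reaches the conclusion by a genuinely different route than the paper. The paper constructs a recovery sequence $\bm v_k \in \mathcal{V}_k^{\bm u}$ with $\bm v_k \rightarrow \bm u$ strongly in $L^2([0,T];\vecSpHTr)$ and $\partial_t \nabla\cdot\bm v_k \rightarrow \partial_t\nabla\cdot\bm u$ strongly in $L^2([0,T];L^2(\Omega))$, tests \eqref{eq:elasticitydiscspace} directly with the admissible difference $\bm u_k - \bm v_k$, and reads off $c_{\mathbb{C}}\|\bm\varepsilon(\bm u_k - \bm v_k)\|^2_{L^2([0,T];\tensSpL)} \rightarrow 0$ after dropping the nonnegative viscoelastic boundary term at $t=T$; strong convergence of $p_k$, $\pf_k$, and of the initial data then kill the remaining terms. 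You instead run a Minty-type energy-identity comparison: test with $\bm u_k$ itself, pass to the limit in the elasticity equation first, test the limit equation with $\bm u$, and sandwich $\int_0^T(\mathbb{C}(\pf_k)\strain[u_k],\strain[u_k])\,dt$ between the resulting $\limsup$ bound and the weak-lower-semicontinuity $\liminf$ bound. Both proofs hinge on exactly the same ingredients (Lemma~\ref{lemma:strong-convergence-pk}, Lemma~\ref{lemma:strong-convergence-phi}, Lemma~\ref{lemma:convergence-initial-data}, and (A7)), so neither is circular: in particular your intermediate limit passage in \eqref{eq:elasticitydiscspacelimit} only needs \emph{weak} convergence of $\strain[u_k]$ paired with the strong convergences of $\pf_k$ and $p_k$, since that equation is affine in the strain. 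What the paper's route buys is self-containedness — no need to identify the limit equation before establishing strong convergence, and no appeal to the chain rule for $\nabla\cdot\bm u \in H^1([0,T];L^2(\Omega))$ or to weak continuity of the trace at $t=T$ — at the price of asserting the existence of the recovery sequence $\bm v_k$ with convergent time derivative of the divergence. Your route avoids the recovery sequence entirely and makes the underlying weak--strong duality of the quadratic form explicit, which is arguably cleaner, but it front-loads part of the limit passage that the paper defers to the subsequent lemma. One small point to make explicit if you write this up: the identification $\nabla\cdot\bm u_k(T) \rightharpoonup \nabla\cdot\bm u(T)$ and $\nabla\cdot\bm u_k(0) \rightarrow \nabla\cdot\bm u(0)$ needed for your boundary terms follows from the weak convergence in $H^1([0,T];L^2(\Omega))$ of Corollary~\ref{cor:weak_convergence-II} together with the continuity of the pointwise-in-time trace on that space.
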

\begin{proof}
 As $\Span\{\bm \eta_i^{\bm u}\}_{i \in \mathbb{N}}$ is dense in $\vecSpHTr$, there exists a sequence $\left\{\bm v_k\right\}_{k \in \mathbb{N}}$ such that for all $k \in \mathbb{N}$ and a.e.~$t \in [0,T]$ it holds $\bm v_k(t) \in \mathcal{V}_k^{\bm u}$, $\bm v_k \rightarrow \bm u$ strongly in $L^2\big([0,T];\vecSpHTr\big)$, and $\partial_t \nabla \cdot \bm v_k \rightarrow \partial_t \nabla \cdot \bm u$ strongly in $L^2\big([0,T]; L^2(\Omega)\big)$. Thus, we further have $\bm u_k - \bm v_k \rightarrow 0$ weakly in $L^2\big([0,T];\vecSpHTr\big)$. Now, we test \eqref{eq:elasticitydiscspace}, integrated in time over $[0,T]$, with the difference $\bm u_k - \bm v_k$ and obtain using (A3)
\begin{align*}
0 &= \int_0^T \Big[
\regularization \big( \partial_t \nabla \cdot \bm u_k, \nabla \cdot (\bm u_k - \bm v_k) \big) 
+ \big( \mathbb{C}(\pf_k) (\bm{\varepsilon}(\bm u_k) - \mathcal{T}(\varphi_k)) - \alpha \revision{\pi_k} \bm I, \bm{\varepsilon}(\bm u_k - \bm{v}_k) \big)
- \langle\bm f,\bm u_k - \bm v_k \rangle\Big]\; dt \\
 &\geq 
 \frac{\regularization}{2} \left\| \nabla \cdot \left(\bm u_k(T) - \bm v_k(T) \right) \right\|_{L^2(\Omega)}^2
 -
 \frac{\regularization}{2} \left\| \nabla \cdot \left(\bm u_k(0) - \bm v_k(0) \right) \right\|_{L^2(\Omega)}^2
 +
 c_{\mathbb{C}} \|\bm \varepsilon(\bm u_k - \bm v_k)\|_{L^2([0,T];\tensSpL)}^2 \\
 &\quad + \int_0^T \Big[ \regularization \big( \partial_t \nabla \cdot \bm v_k, \nabla \cdot (\bm u_k - \bm v_k) \big) + \big( \mathbb{C}(\varphi_k) ({\bm \varepsilon}(\bm v_k)-\mathcal{T}(\varphi_k)) - \alpha \revision{\pi_k} \bm I, \bm \varepsilon(\bm u_k - \bm v_k) \big) - \langle \bm f,\bm u_k - \bm v_k \rangle \Big]\; dt.
\end{align*}
Dropping the first term on the right hand side, utilizing the fact that $\bm u_k(0) = \bm u_{k,0} \rightarrow \bm{u}(0)$ strongly in $\vecSpHTr$ by construction, cf.\ Lemma~\ref{lemma:convergence-initial-data}, and from the convergence assertions of $\bm v_k \rightarrow \bm u$, $\pf_k \rightarrow \pf$, cf.\ Lemma~\ref{lemma:strong-convergence-phi}, and $\revision{\pi_k}\rightarrow p$, cf.\ Lemma~\ref{lemma:strong-convergence-pk}, we get
\begin{align*}
 \|\bm \varepsilon(\bm u_k - \bm v_k)\|_{L^2\left([0,T];\tensSpL\right)}^2 \overset{k \rightarrow \infty}{\longrightarrow} 0. 
\end{align*}
The assertion follows together with Korn's inequality.
\end{proof}

We continue with the main result.
\begin{lemma}
 The quintuple $(\pf_k, \mu_k, \bm u_k, \theta_k, \bm q_k)$ of solutions to the discrete-in-space system in \eqref{eq:discspace} in the limit $k \rightarrow \infty$ converges, up to subsequence, to a weak solution $(\pf, \mu, \bm u, \theta, \bm q)$ in the sense of Definition~\ref{def:Weak solution}.
\end{lemma}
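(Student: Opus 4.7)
The plan is to pass to the limit $k\to\infty$ in each of the five discrete equations~\eqref{eq:ch1discspace}--\eqref{eq:darcyflowdiscspace} after testing with time-dependent functions valued in the Galerkin spaces. Fix an index $j_0\in\mathbb{N}$ and a scalar $\chi\in C^\infty_c([0,T])$; for all $k\geq j_0$, the test function $\chi(t)\eta_{j_0}^{\mathrm{ch}}$ (respectively $\chi\,\bm\eta_{j_0}^{\bm u},\chi\eta_{j_0}^\theta,\chi\bm\eta_{j_0}^{\bm q}$) is admissible in~\eqref{eq:discspace}. After integration over $[0,T]$, a density argument (each $\{\eta_i^\bullet\}_{i\in\mathbb{N}}$ spans a dense subspace of the corresponding test space) together with linearity in the test function and dominated convergence in $t$ will show that once the limit equations hold on finite linear combinations with smooth time coefficients, they hold on all admissible test functions from Definition~\ref{def:Weak solution}.

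The main work is to pass to the limit in the individual terms. The linear/time-derivative terms $\langle\partial_t\pf_k,\eta^{\mathrm{ch}}\rangle$, $\langle\partial_t\theta_k,\eta^\theta\rangle$, and $\eta\,(\partial_t\nabla\cdot\bm u_k,\nabla\cdot\bm\eta^{\bm u})$ pass to the limit by Corollary~\ref{cor:weak_convergence} and Corollary~\ref{cor:weak_convergence-II}. The source terms and the purely linear $\gamma\ell(\nabla\pf_k,\nabla\eta^{\mathrm{ch}})$ and $(\mu_k,\eta^{\mathrm{ch}})$, $(\bm q_k,\nabla\eta^\theta)$ terms pass by weak convergence. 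The nonlinear terms require the strong convergence results from Lemma~\ref{lemma:strong-convergence-phi}, Lemma~\ref{lemma:strong-convergence-pk}, and Lemma~\ref{lemma:strong-convergence-u}. Concretely:
\begin{itemize}
\item In $(m(\pf_k)\nabla\mu_k,\nabla\eta^{\mathrm{ch}})$ and $(\kappa(\pf_k)^{-1}\bm q_k,\bm\eta^{\bm q})$, continuity of $m,\kappa$, a.e.\ convergence of $\pf_k$, and the uniform bounds (A2) give $m(\pf_k)\nabla\eta^{\mathrm{ch}}\to m(\pf)\nabla\eta^{\mathrm{ch}}$ and $\kappa(\pf_k)^{-1}\bm\eta^{\bm q}\to\kappa(\pf)^{-1}\bm\eta^{\bm q}$ strongly in $L^2$ by dominated convergence, paired with weak convergence of $\nabla\mu_k$ and $\bm q_k$.
\item The $\Psi'(\pf_k)$ term is handled via a.e.\ convergence, assumption (A1), and dominated convergence using the uniform $H^1$-bound on $\pf_k$ combined with $(A1)$-growth.
\item The elasticity nonlinearities $\bm\varepsilon(\bm u_k)-\mathcal{T}(\pf_k)$ appear in quadratic form; strong convergence of both $\strain[u_k]$ (Lemma~\ref{lemma:strong-convergence-u}) and $\pf_k$ combined with continuity of $\mathbb{C}$, $\mathbb{C}'$, $\mathcal{T}$, $\mathcal{T}'$ gives strong $L^2$-convergence of all products tested against $\eta^{\mathrm{ch}}$ or $\bm\varepsilon(\bm\eta^{\bm u})$.
\item The fluid contributions $\delta_\varphi\mathcal{E}_{\mathrm{f}}$, $\delta_{\bm\varepsilon}\mathcal{E}_{\mathrm{f}}$, $\delta_\theta\mathcal{E}_{\mathrm{f}}$ all depend on $\theta_k-\alpha\nabla\cdot\bm u_k$, which, multiplied by $M$, is exactly $p_k$. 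Here we rely essentially on (A2) (constant $M,\alpha$) and Lemma~\ref{lemma:strong-convergence-pk}: $p_k\to p$ strongly in $L^2([0,T];L^2(\Omega))$ and a.e., which lets us pass to the limit in the quadratic expressions.
\end{itemize}

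The main obstacle is exactly this last point: without strong convergence of $p_k$ one cannot identify the limits of the quadratic fluid energy terms, and in particular cannot conclude that $\delta_\theta\mathcal{E}_{\mathrm{f}}(\pf,\strain[u],\theta)=M(\theta-\alpha\nabla\cdot\bm u)$ in the limit equation. This is precisely where assumption (A2) (constant $M,\alpha$) and (A7) (regularization $\eta>0$) enter: (A7) gives the extra regularity $\nabla\cdot\partial_t\bm u_k\in L^2$ used in Lemma~\ref{lemma:strong-convergence-pk}, and (A2) makes $p_k$ affine in $(\theta_k,\nabla\cdot\bm u_k)$, avoiding a circular dependence on the unknown $\pf_k$.

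It remains to verify the initial conditions and the energy estimate~\eqref{eq:boundenergy}, and to remove the extra regularity assumption $\pf_0\in H^2_{\bm n}(\Omega)$. The initial conditions for $\pf(0)=\pf_0$, $\theta(0)$ in the duality sense, and $\bm u(0)=\bm u_0$ (hence $\nabla\cdot\bm u(0)=\varepsilon_{v,0}$) follow from Lemma~\ref{lemma:convergence-initial-data} together with the continuity of the trace $v\mapsto v(0)$ on the relevant Bochner spaces. The energy bound~\eqref{eq:boundenergy} is inherited from~\eqref{eq:bound} and~\eqref{eq:boundphi-theta} by weak lower semicontinuity of norms. Finally, for the general case $\pf_0\in H^1(\Omega)$ with $\Psi(\pf_0)\in L^1(\Omega)$, one approximates $\pf_0$ by a sequence $\pf_0^n\in H^2_{\bm n}(\Omega)$ in the $H^1$-topology with $\Psi(\pf_0^n)\to\Psi(\pf_0)$ in $L^1(\Omega)$ (as in~\cite{Garcke2021}), solves the problem for each $\pf_0^n$, and repeats the same compactness and limit-passage argument, using the uniform bound~\eqref{eq:boundenergy}, to obtain a weak solution with the desired initial datum.
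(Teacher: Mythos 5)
Your proposal is correct and follows essentially the same route as the paper: testing with $\vartheta(t)\eta_j^\bullet$, splitting the terms into linear, semi-linear (weak--strong product convergence via a.e.\ convergence of $\pf_k$ and dominated convergence), and energy-related nonlinear contributions handled by the strong convergences of $\pf_k$, $\strain[u_k]$ and, crucially, $p_k$, followed by density and the fundamental lemma of the calculus of variations. You also correctly identify where (A2) and (A7) enter and how the initial data, energy estimate, and the relaxation to $\pf_0\in H^1(\Omega)$ are recovered, matching the paper's concluding steps.
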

\begin{proof}
We start by multiplying the discrete-in-space system in \eqref{eq:discspace} with an arbitrary test function $\vartheta \in C_c^\infty(0,T)$ and integrate in time (from $0$ to $T$), which yields for any arbitrary but fixed $j \in \{1,\ldots,k\}$
\begin{subequations}
\begin{align}
\int_0^T \Big[\langle \partial_t \varphi_k, \eta_j^\mathrm{ch}\rangle + \big(m(\pf_k) \nabla \mu_k,\nabla \eta_j^\mathrm{ch}\big) - \big(R,\eta_j^\mathrm{ch}\big)\Big] \vartheta(t) \;dt &= 0, 
\label{eq:ch1discspacelimit}\\
\int_0^T \Big[\big(\mu_k ,\eta_j^\mathrm{ch}\big) - \big(\delta_\varphi\mathcal{E}_\mathrm{i}(\varphi_k),\eta_j^\mathrm{ch}\big) - \big(\delta_\varphi\mathcal{E}_\mathrm{e}(\varphi_k, \strain[u_k]),\eta_j^\mathrm{ch}\big) \Big] \vartheta(t) \;dt &=0,
\label{eq:ch2discspacelimit}\\
\int_0^T \Big[\regularization \big(\partial_t \nabla \cdot \bm u_k, \nabla \cdot \bm \eta_j^{\bm u} \big) + \big(\delta_{\bm \varepsilon}\mathcal{E}_\mathrm{e}(\varphi_k, \strain[u_k]),\bm \varepsilon(\bm \eta_j^{\bm u})\big) + \big(\delta_{\bm \varepsilon}\mathcal{E}_\mathrm{f,\revision{
k}}(\strain[u_k], \theta_k),\bm \varepsilon(\bm \eta_j^{\bm u})\big) - \langle \bm f,\bm \eta_j^{\bm u} \rangle\Big] \vartheta(t) \;dt &=0
\label{eq:elasticitydiscspacelimit}\\
\int_0^T \Big[\left\langle\partial_t\theta_k,\eta_j^\theta\right\rangle + \big(\nabla\cdot \bm q_k, \eta_j^\theta\big) - \big(S_f,\eta_j^\theta\big)\Big] \vartheta(t) \;dt &= 0,
\label{eq:flowdiscspacelimit}\\
\int_0^T \Big[\big(\kappa(\pf_k)^{-1}\bm q_k, \bm \eta_j^{\bm q} \big) - \big(\delta_\theta\energy_{\mathrm{f},\revision{
k}}(\strain[u_k],\theta_k),\nabla \cdot \bm \eta_j^{\bm q}\big)\Big] \vartheta(t) \;dt &= 0. \label{eq:darcyflowdiscspacelimit}
\end{align}
\label{eq:discspacelimit}%
\end{subequations}
We classify the occurring terms in three categories: Linear terms, semi-linear terms (terms with state-dependent parameters that are otherwise linear), 
and energy related nonlinear terms.
In the following, we discuss each of the three types separately showing that the discrete terms converge to associated terms in the definition of a weak solution, cf.\ Definition~\ref{def:Weak solution}.

\paragraph{Linear terms}
We start our considerations with equation \eqref{eq:ch1discspacelimit} and the linear functional
\begin{align*}
 \partial_t \pf_k \mapsto \int_0^T \langle \partial_t \varphi_k, \eta_j^\mathrm{ch}\rangle \vartheta(t) \;dt.
\end{align*}
Since bounded linear operators are continuous and $\partial_t \pf_k \rightarrow \partial_t \pf$ weakly in $L^2\big([0,T];H^1(\Omega)'\big)$, we infer that
\begin{align*}
 \int_0^T \langle \partial_t \varphi_k, \eta_j^\mathrm{ch}\rangle \vartheta(t) \;dt \rightarrow \int_0^T \langle \partial_t \varphi, \eta_j^\mathrm{ch}\rangle \vartheta(t) \;dt
\end{align*}
for $k \rightarrow \infty$. The convergence of the other linear terms in \eqref{eq:discspacelimit} follows similarly. \revision{Note that this includes the terms related to the discrete fluid energy.} However, for the flux term in \eqref{eq:flowdiscspacelimit} we need to take into account that while $\bm q_k \in H_0(\Div,\Omega)$ we only attain weak convergence of $\bm q_k$ to $\bm q$ in $L^2\big([0,T];\vecSpL\big)$ so that we obtain
\begin{align*}
 \int_0^T \big(\nabla\cdot \bm q_k, \eta_j^\theta\big) \vartheta(t) \;dt = - \int_0^T \big(\bm q_k, \nabla\eta_j^\theta\big) \vartheta(t) \;dt \rightarrow - \int_0^T \big(\bm q, \nabla\eta_j^\theta\big) \vartheta(t) \;dt.
\end{align*}

\paragraph{Semi-linear terms}
\revision{The only semi-linear contributions are those involving} the mobility, permeability, and elasticity tensor. Starting with the former, the continuity of the  chemical mobility $m(\cdot)$ together with the convergence of $\pf_k$ to $\pf$ a.e.~in $\Omega\times[0,T]$ leads to
\begin{align*}
 m(\pf_k) \rightarrow m(\pf) \textrm{ a.e.~in } \Omega\times[0,T].
\end{align*}
By the boundedness of $m(\cdot)$ we further have
\begin{align*}
 \|m(\pf_k) \nabla \eta_j^\mathrm{ch} \vartheta(t)\|_{\vecSpL} \leq C_m \|\nabla \eta_j^\mathrm{ch}\|_{\vecSpL} \sup_{t \in [0,T]} |\vartheta(t)|,
\end{align*}
and therefore by application of Lebesgue's dominated convergence theorem (for the Bochner integral~\cite{Bogdan1965})
\begin{align*}
 \int_0^T \|m(\pf_k) \nabla \eta_j^\mathrm{ch} \vartheta(t) - m(\pf)\nabla \eta_j^\mathrm{ch} \vartheta(t)\|_{\vecSpL}^2 \;dt \rightarrow 0
\end{align*}
as $k \rightarrow \infty$ and thus
\begin{align*}
 m(\pf_k) \nabla \eta_j^\mathrm{ch} \vartheta(t) \rightarrow m(\pf) \nabla \eta_j^\mathrm{ch} \vartheta(t) \textrm{ strongly in } L^2\big([0,T];\vecSpL\big).
\end{align*}
Since $\nabla \mu_k \rightarrow \nabla \mu$ weakly in $L^2\big([0,T];\vecSpL\big)$, using the lemma for products of weak-strong converging sequences (\cite{Boyer2013}, Proposition II.2.12) we obtain
\begin{align*}
 \int_0^T \big( m(\pf_k) \nabla \mu_k,\nabla \eta_j^\mathrm{ch} \big) \vartheta(t) \;dt &= \int_0^T \big( \nabla \mu_k, m(\pf_k) \nabla \eta_j^\mathrm{ch} \vartheta(t) \big) \;dt
 \rightarrow 
 \int_0^T \big( m(\pf) \nabla \mu,\nabla \eta_j^\mathrm{ch} \big) \vartheta(t) \;dt,
\end{align*}
for $k \rightarrow \infty$. The convergence of the term comprising the permeability $\kappa(\pf)$ in~\eqref{eq:darcyflowdiscspacelimit} follows similarly from the continuity and boundedness of $\kappa(\cdot)$, cf.\ assumption (A2), along with the weak convergence of $\bm q_k$ to $\bm q$ in $L^2\big([0,T];\vecSpL\big)$.

\paragraph{Free energy related nonlinear terms}
One term requiring an extra discussion involves the elasticity terms. Firstly, we split $\delta_\varphi\mathcal{E}_\mathrm{e}(\varphi_k, \strain[u_k])$ into a semi-quadratic and a non-convex contribution as 
\begin{align*}
 &\int_0^T \big(\delta_\varphi\mathcal{E}_\mathrm{e}(\varphi_k, \strain[u_k]),\eta_j^\mathrm{ch}\big) \vartheta(t) \;dt = \int_0^T \Big(\frac{1}{2} \big({\bm \varepsilon}(\bm u_k) - \mathcal{T}(\varphi_k)\big)\!:\!\mathbb{C}'(\varphi_k)\big({\bm \varepsilon}(\bm u_k) - \mathcal{T}(\varphi_k)\big),\eta_j^\mathrm{ch}\Big) \vartheta(t) \;dt \\
 &\hspace{6cm}- \int_0^T \big(\mathcal{T}'(\varphi_k)\!:\!\mathbb{C}(\varphi_k)\big({\bm \varepsilon}(\bm u_k) - \mathcal{T}(\varphi_k)\big),\eta_j^\mathrm{ch}\big) \vartheta(t) \;dt
\end{align*}
For the semi-quadratic contribution we obtain
\begin{align*}
 \Big\|\frac{1}{2} \big({\bm \varepsilon}(\bm u_k) - \mathcal{T}(\varphi_k)\big)\!&:\!\mathbb{C}'(\varphi_k)\big({\bm \varepsilon}(\bm u_k) - \mathcal{T}(\varphi_k)\big) \eta_j^\mathrm{ch} \vartheta(t)\Big\|_{L^1(\Omega)} \\
 &\leq \left(L_\mathbb{C} \|\strain[u_k]\|_{\tensSpL}^2 + L_\mathbb{C} C_{\mathcal{T}}^2 \|\pf_k\|_{L^2(\Omega)}^2\right) \|\eta_j^\mathrm{ch}\|_{L^{\infty}(\Omega)} \sup_{t \in [0,T]} |\vartheta(t)| \\
 \overset{\eqref{eq:bound}}&{\leq} \max\{L_\mathbb{C},L_\mathbb{C} C_{\mathcal{T}}^2\} C_T  \|\eta_j^\mathrm{ch}\|_{L^{\infty}(\Omega)} \sup_{t \in [0,T]} |\vartheta(t)|
\end{align*}
which is bounded since $\eta_j^\mathrm{ch} \in H_{\bm n}^2(\Omega) \subseteq L^\infty(\Omega)$. The non-convex contribution is in fact a semi-linear term, due to (A3) and (A4), and can be treated analogously.
Thus, together with the strong convergence $\bm{\varepsilon}(\bm{u}_k) \rightarrow \bm{\varepsilon}(\bm{u})$ in $L^2\big([0,T];\tensSpL\big)$, cf.\ Lemma~\ref{lemma:strong-convergence-u}, by application of Lebesgue's dominated convergence theorem we have
\begin{align*}
 \int_0^T \big(\delta_\varphi\mathcal{E}_\mathrm{e}(\varphi_k, \strain[u_k]),\eta_j^\mathrm{ch}\big) \vartheta(t) \;dt \rightarrow \int_0^T \big(\delta_\varphi\mathcal{E}_\mathrm{e}(\varphi, \strain[u]),\eta_j^\mathrm{ch}\big) \vartheta(t) \;dt.
\end{align*}
Secondly, in a similar manner, utilizing the continuity assumptions (A3) and (A4), we obtain for $k\rightarrow \infty$
\begin{align*}
 \int_0^T &\big(\delta_{\bm \varepsilon}\mathcal{E}_\mathrm{e}(\varphi_k, \strain[u_k]),\bm \varepsilon(\bm \eta_j^{\bm u})\big) \vartheta(t) \;dt = \int_0^T \big(\mathbb{C}(\varphi_k)\left({\bm \varepsilon}(\bm u_k)-\mathcal{T}(\varphi_k)\right),\bm \varepsilon(\bm \eta_j^{\bm u})\big) \vartheta(t) \;dt \\
&\qquad \rightarrow \int_0^T \big(\delta_{\bm \varepsilon}\mathcal{E}_\mathrm{e}(\varphi, \strain[u]),\bm \varepsilon(\bm \eta_j^{\bm u})\big) \vartheta(t) \;dt.
\end{align*}
Lastly, we consider the interfacial energy related term
\begin{align*}
 \int_0^T \big( \delta_\varphi\mathcal{E}_\mathrm{i}(\varphi_k),\eta_j^\mathrm{ch} \big) \vartheta(t) \;dt = \gamma\ell\int_0^T \big( \nabla \varphi_k, \nabla \eta_j^\mathrm{ch} \big) \vartheta(t) \;dt + \frac{\gamma}{\ell}\int_0^T \big(\Psi'(\varphi_k),\eta_j^\mathrm{ch}\big) \vartheta(t) \;dt.
\end{align*}
The first right hand side term can be represented by a linear functional and is treated accordingly. For the second term, the continuity of $\Psi'(\cdot)$ together with the convergence of $\pf_k$ to $\pf$ a.e.~in $\Omega\times[0,T]$ leads to
\begin{align*}
 \Psi'(\pf_k) \rightarrow \Psi'(\pf) \textrm{ a.e.~in } \Omega\times[0,T].
\end{align*}
Furthermore, by Hölder's inequality and assumption (A1) we obtain
\begin{align*}
 \|\Psi'(\pf_k) \eta_j^\mathrm{ch} \vartheta(t)\|_{L^1(\Omega)} &\leq \|\Psi'(\pf_k)\|_{L^1(\Omega)} \|\eta_j^\mathrm{ch}\|_{L^\infty(\Omega)} \sup_{t \in [0,T]} |\vartheta(t)| \\
 &\leq C_\Psi\left(\|\Psi(\pf_k)\|_{L^1(\Omega)} + \|\pf_k\|^2_{L^2(\Omega)}\right) \|\eta_j^\mathrm{ch}\|_{L^\infty(\Omega)} \sup_{t \in [0,T]} |\vartheta(t)| \\
 \overset{\eqref{eq:bound}}&{\leq} C_\Psi C_T \|\eta_j^\mathrm{ch}\|_{L^\infty(\Omega)} \sup_{t \in [0,T]} |\vartheta(t)|
\end{align*}
so that we can apply Lebesgue's dominated convergence theorem to get
\begin{align*}
 \int_0^T \|\Psi'(\pf_k) \eta_j^\mathrm{ch} \vartheta(t) - \Psi'(\pf) \eta_j^\mathrm{ch} \vartheta(t)\|_{L^1(\Omega)} \;dt \rightarrow 0
\end{align*}
as $k \rightarrow \infty$ and thus
\begin{align*}
 \int_0^T \big(\Psi'(\varphi_k),\eta_j^\mathrm{ch}\big) \vartheta(t) \;dt &= \int_0^T \int_\Omega \Psi'(\varphi_k) \eta_j^\mathrm{ch} \vartheta(t) \;dx \;dt 
 \rightarrow 
 \int_0^T \big(\Psi'(\varphi),\eta_j^\mathrm{ch}\big) \vartheta(t) \;dt.
\end{align*}

Now, we pass to the limit $k \rightarrow \infty$ in \eqref{eq:discspacelimit}, leading to
\begin{align*}
 \int_0^T \Big[\langle \partial_t \varphi, \eta_j^\mathrm{ch}\rangle + \big(m(\pf) \nabla \mu,\nabla \eta_j^\mathrm{ch}\big) - \big(R,\eta_j^\mathrm{ch}\big)\Big] \vartheta(t) \;dt &= 0, \\
 \int_0^T \Big[\big(\mu ,\eta_j^\mathrm{ch}\big) - \big(\delta_\varphi\mathcal{E}_\mathrm{i}(\varphi),\eta_j^\mathrm{ch}\big) - \big(\delta_\varphi\mathcal{E}_\mathrm{e}(\varphi, \strain[u]),\eta_j^\mathrm{ch}\big) \Big] \vartheta(t) \;dt &=0, \\
 \int_0^T \Big[\regularization \big(\partial_t \nabla \cdot \bm u, \nabla \cdot \bm \eta_j^{\bm u} \big) + \big(\delta_{\bm \varepsilon}\mathcal{E}_\mathrm{e}(\varphi, \strain[u]),\bm \varepsilon(\bm \eta_j^{\bm u})\big) + \big(\delta_{\bm \varepsilon}\mathcal{E}_\mathrm{f}( \strain[u], \theta),\bm \varepsilon(\bm \eta_j^{\bm u})\big) - \langle \bm f,\bm \eta_j^{\bm u} \rangle\Big] \vartheta(t) \;dt &=0, \\
 \int_0^T \Big[\left\langle\partial_t\theta,\eta_j^\theta\right\rangle + \big(\bm q, \nabla \eta_j^\theta\big) - \big(S_f,\eta_j^\theta\big)\Big] \vartheta(t) \;dt &= 0, \\
 \int_0^T \Big[\big(\kappa(\pf)^{-1}\bm q, \bm \eta_j^{\bm q} \big) - \big(\delta_\theta\energy_\mathrm{f}(\strain[u],\theta),\nabla \cdot \bm \eta_j^{\bm q}\big)\Big] \vartheta(t) \;dt &= 0.
\end{align*}
As $\vartheta$ and $j$ were chosen arbitrarily, the above holds true for all $\vartheta \in C_c^\infty(0,T)$ and $j \in \mathbb{N}$. By applying the fundamental lemma of the calculus of variations, using the density of $\Span\{\eta^\mathrm{ch}_j\}_{j \in \mathbb{N}}$ in $H^1(\Omega)$, of $\Span\{\bm \eta^{\bm u}_j\}_{j \in \mathbb{N}}$ in $\vecSpHTr$, of $\Span\{\eta^{\theta}_j\}_{j \in \mathbb{N}}$ in $H^1(\Omega)$ and of $\Span\{\bm \eta^{\bm q}_j\}_{j \in \mathbb{N}}$ in $H(\Div, \Omega)$ 
we can infer that the quintuple $(\pf, \mu, \bm u, \theta, \bm q)$ satisfies \eqref{eq:weak}.

From Lemma~\ref{lemma:convergence-initial-data} and in particular its proof, it follows that $\nabla \cdot \bm{u}$, $\pf$ and $\theta$ attain their respective initial conditions. 
We thus conclude that the limit quintuple $(\pf, \mu, \bm u, \theta, \bm q)$ is a weak solution in the sense of Definition~\ref{def:Weak solution}.
\end{proof}

\begin{lemma}[Energy estimate]
The quintuple $(\pf, \mu, \bm u, \theta, \bm q)$, being a weak solution in the sense of Definition~\ref{def:Weak solution}, satisfies the energy estimate~\eqref{eq:boundenergy}.
\end{lemma}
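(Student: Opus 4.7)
The plan is to transfer the discrete uniform bound
\begin{align*}
 \underset{t \in [0,T]}{\sup} \bigg[\|\Psi(\pf_k)\|_{L^1(\Omega)} + \|\pf_k\|_{H^1(\Omega)}^2 + \|\bm u_k\|_{\vecSpH}^2 + \|\theta_k\|_{L^2(\Omega)}^2\bigg](t) + \|\nabla\mu_k\|_{L^2([0,T];\vecSpL)}^2 + \|\bm q_k\|_{L^2([0,T];\vecSpL)}^2 \leq C_T
\end{align*}
from Lemma~\ref{lemma:discrete-apriori}, together with the bounds on $\partial_t \pf_k$ and $\partial_t \theta_k$ in Lemma~\ref{lemma:a-priori-dt} and the bound on $\mu_k$ in $L^2([0,T];H^1(\Omega))$ from~\eqref{eq:boundmu}, to the limit quintuple $(\pf,\mu,\bm u,\theta,\bm q)$ via lower semicontinuity. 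No new energy identity is needed at this stage: the discrete estimate is already at least as strong as~\eqref{eq:boundenergy}, and the task reduces to treating each norm on the left-hand side separately in the limit $k\to\infty$.

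For the time-integrated contributions $\|\pf\|_{H^1([0,T];H^1(\Omega)')}^2$, $\|\theta\|_{H^1([0,T];H^1(\Omega)')}^2$, $\|\mu\|_{L^2([0,T];H^1(\Omega))}^2$ and $\|\bm q\|_{L^2([0,T];\vecSpL)}^2$, I would invoke the weak convergences from Corollary~\ref{cor:weak_convergence} and apply the weak lower semicontinuity of the respective Hilbert-space norms, so that each quantity is bounded by $\liminf_{k\to\infty}$ of its uniformly bounded discrete counterpart. For the essential-supremum-in-time terms $\underset{t}{\mathrm{ess\,sup}}\,\|\pf(t)\|_{H^1(\Omega)}^2$, $\underset{t}{\mathrm{ess\,sup}}\,\|\bm u(t)\|_{\vecSpH}^2$ and $\underset{t}{\mathrm{ess\,sup}}\,\|\theta(t)\|_{L^2(\Omega)}^2$, I use the analogous weak-$\ast$ lower semicontinuity of the $L^\infty(0,T;X)$ norm (dual of $L^1(0,T;X')$ for the reflexive spaces $X = H^1(\Omega), \vecSpHTr, L^2(\Omega)$) together with the weak-$\ast$ convergences of $\pf_k$, $\bm u_k$ and $\theta_k$ stated in the same corollary.

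The main (and essentially only) obstacle is the pointwise-in-time interfacial term $\underset{t\in[0,T]}{\mathrm{ess\,sup}}\,\|\Psi(\pf(t))\|_{L^1(\Omega)}$: since $\Psi$ is nonlinear, mere weak-$\ast$ convergence of $\pf_k$ does not commute with it. The remedy is to fall back on the stronger convergence furnished by Lemma~\ref{lemma:strong-convergence-phi}, namely $\pf_k\to\pf$ in $C([0,T];L^2(\Omega))$ and a.e.\ in $\Omega\times[0,T]$. By Fubini, along a subsequence $\pf_{k_j}(t)\to\pf(t)$ a.e.\ in $\Omega$ for almost every $t\in[0,T]$; continuity of $\Psi$, cf.~(A1), then yields $\Psi(\pf_{k_j}(t))\to\Psi(\pf(t))$ a.e.\ in $\Omega$, and since $\Psi\ge 0$, Fatou's lemma gives
\begin{align*}
 \|\Psi(\pf(t))\|_{L^1(\Omega)} \leq \liminf_{j\to\infty}\|\Psi(\pf_{k_j}(t))\|_{L^1(\Omega)} \leq C_T \qquad \text{for a.e.\ } t\in[0,T].
\end{align*}
Summing the individual bounds then produces~\eqref{eq:boundenergy} with a constant $C$ depending only on the data and material constants, as claimed.
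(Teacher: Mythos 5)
Your proposal is correct and follows essentially the same route as the paper: collect the uniform discrete bounds from Lemma~\ref{lemma:discrete-apriori}, \eqref{eq:boundmu} and \eqref{eq:boundphi-theta}, handle the nonlinear interfacial term $\|\Psi(\pf(t))\|_{L^1(\Omega)}$ via the a.e.\ convergence of $\pf_k$, continuity and non-negativity of $\Psi$, and Fatou's lemma, and pass all remaining norms to the limit by weak and weak-$\ast$ lower semicontinuity. The only (harmless) cosmetic difference is that you spell out the duality argument behind the $L^\infty([0,T];X)$ lower semicontinuity, which the paper simply cites.
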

\begin{proof}
Combining the estimate~\eqref{eq:bound} resulting from Grönwall's inequality with \eqref{eq:boundmu} and \eqref{eq:boundphi-theta}, we obtain 
\begin{equation}
\begin{aligned}
 \underset{t \in [0,T]}{\sup} \Big(\|\Psi&(\pf_k(t))\|_{L^1(\Omega)} + \|\pf_k(t)\|_{H^1(\Omega)}^2 + \|\bm u_k(t)\|_{\vecSpH}^2 + \|\theta_k(t)\|_{L^2(\Omega)}^2\Big) + \regularization \|\partial_t \nabla \cdot \bm{u}_k\|^2_{L^2([0,T];L^2(\Omega))} \\
 &+ \|\mu_k\|_{L^2([0,T];H^1(\Omega))}^2 + \|\bm q_k\|_{L^2([0,T];\vecSpL)}^2 + \|\pf_k\|_{H^1\left([0,T];H^1(\Omega)'\right)}^2 + \|\theta_k\|_{H^1\left([0,T];H^1(\Omega)'\right)}^2 \leq C.
\end{aligned}
\label{eq:boundenergydisc}
\end{equation}
The continuity of $\Psi(\cdot)$ together with the convergence of $\pf_k$ to $\pf$ a.e.~in $\Omega\times[0,T]$ leads to
\begin{align*}
 \Psi(\pf_k) \rightarrow \Psi(\pf) \textrm{ a.e.~in } \Omega\times[0,T].
\end{align*}
Then, the non-negativity of $\Psi(\cdot)$ and Fatou's lemma (\cite{Brezis2011}, Lemma 4.1) yield
\begin{align*}
 \|\Psi(\pf(t))\|_{L^1(\Omega)} = \int_\Omega \Psi(\pf(t)) \;dx &= \int_\Omega \liminf_{k \rightarrow \infty} \Psi(\pf_k(t)) \;dx \\
 &\leq \liminf_{k \rightarrow \infty} \int_\Omega \Psi(\pf_k(t)) \;dx = \liminf_{k \rightarrow \infty} \|\Psi(\pf_k(t))\|_{L^1(\Omega)}
\end{align*}
for a.e.~$t \in [0,T]$. Finally, we make use of the weak and weak-$\ast$ lower semicontinuity of the norms (\cite{Brezis2011}, §3, Remark 6) allowing us to pass to the limit $k \rightarrow \infty$ in \eqref{eq:boundenergydisc} which yields the desired energy estimate~\eqref{eq:boundenergy}.
\end{proof}

Above, weak solutions in the sense of Definition~\ref{def:Weak solution} are merely established for initial data with increased regularity $\pf_0 \in H^2_{\bm{n}}(\Omega)$. In order to complete the proof of Theorem~\ref{thm:existence}, it remains to discuss the initial conditions satisfying lower regularity $\pf_0 \in H^1(\Omega)$ with $\Psi(\pf_0) \in L^1(\Omega)$.

\begin{proof}[Finalization of proof to Theorem~\ref{thm:existence} (general initial conditions)]
Following ideas of \cite{Colli2012}, p.\hspace{0.05cm}440, and \cite{Garcke2021}, §3.5, for such $\pf_0$, we can construct a regularizing sequence $\{\pf_{0,n}\}_{n\in\mathbb{N}}\subset H^2_{\bm{n}}(\Omega)$ such that $\pf_{0,n} \rightarrow \pf_0$ strongly in $L^2(\Omega)$ and weakly in $H^1(\Omega)$. Furthermore, a uniform $L^1(\Omega)$ bound can be derived for $\Psi(\pf_{0,n})$.  Here, assumption (A1) and in particular the twice continuous differentiability of $\Psi(\cdot)$ with $\Psi'(0) = 0$ as well as~\eqref{eq:boundpsidoubleprime} enter.
Finally, for each regularized initial datum $\pf_{0,n}\in H_{\bm n}^2(\Omega)$, let the quintuple $(\pf_n, \mu_n, \bm u_n, \theta_n, \bm q_n)$ be a weak solution to \eqref{eq:model} in the sense of Definition~\ref{def:Weak solution} with initial conditions $\pf_{0,n}$ and $\theta_0$. Then, by arguments similar to those for the Faedo-Galerkin method, we can pass to the limit $n \rightarrow \infty$ with the quintuple of limit functions denoted again as $(\pf, \mu, \bm u, \theta, \bm q)$ that is a weak solution to \eqref{eq:model} in the sense of Definition~\ref{def:Weak solution} and fulfills the energy estimate~\eqref{eq:boundenergy} which concludes the proof of Theorem~\ref{thm:existence}.
\end{proof}

\section{Continuous dependence and uniqueness} \label{sec:continuous-dependence}

The goal of this section is to establish continuous dependence of weak solutions on the data, and eventually also uniqueness of weak solutions of the Cahn-Hilliard-Biot model, cf.\ Section~\ref{sec:model}. For this, we make stronger assumptions on the model compared to the existence analysis in Section~\ref{sec:existence}. In particular, we assume all material parameter laws to be independent of the phase-field variable $\pf$. In addition, we consider merely solutions with $H(\Div,\Omega)$ regularity for fluxes, allowing for retaining a natural saddle point structure of the corresponding governing equations~\eqref{eq:weak}, also cf.\ Remark~\ref{remark:hdiv-flux}. In addition, we neglect secondary consolidation type regularization and set $\regularization=0$.

We consider the following simplifying assumptions on the material parameters.
\begin{itemize}
 \item[(B1)] The derivative $\Psi'(\pf)$ of the double-well potential is Lipschitz continuous with Lipschitz constant $L_{\Psi'}$.
 \item[(B2)] Chemical mobility $m(\pf) = m$, permeability $\kappa(\pf) = \kappa$, fluid compressibility coefficient $M(\pf) = M$ and Biot-Willis coupling coefficient $\alpha(\pf) = \alpha$ are positive constants.
 \item[(B3)] Homogeneous elasticity: The elastic stiffness tensor $\mathbb{C}(\pf) = \mathbb{C}$ is constant, symmetric and positive definite, satisfying $c_{\mathbb{C}}|\bm \varepsilon|^2 \leq \bm \varepsilon\!:\!\mathbb{C} \bm \varepsilon \leq C_{\mathbb{C}}|\bm \varepsilon|^2$ for all $\bm \varepsilon \in \mathbb{R}^{d\times d}_\mathrm{sym}$ with positive constants $c_{\mathbb{C}}$ and $C_{\mathbb{C}}$.
 \item[(B4)] Vegard's law: The eigenstrain $\mathcal{T}$ satisfies the affine linear ansatz $\mathcal{T}(\pf) = \bm{\hat \varepsilon} \pf + \bm{\varepsilon^\ast}$ with constant symmetric tensors $\bm{\hat \varepsilon}$ and $\bm{\varepsilon^\ast}$, satisfying $\|\bm{\hat \varepsilon}\|_{\tensSpL} \leq C_{\bm{\hat \varepsilon}}$ and $\|\bm{\hat \varepsilon}\tilde\pf\|_{\tensSpL} \leq C_{\bm{\hat \varepsilon}} \|\tilde\pf\|_{L^2(\Omega)}$ with positive constant $C_{\bm{\hat \varepsilon}}$.
\end{itemize}

\begin{theorem}[Continuous dependence\label{thm:cdweak}]
Let assumptions (B1)--(B4) hold. Let $\left(R_i, \bm{f}_i, S_{f_i}, \pf_{0,i}, \theta_{0,i}\right)$, $i=1,2$, denote two sets of data in the sense of (A5)--(A6), and $\{(\pf_i, \mu_i, \bm u_i, \theta_i, \bm q_i)\}_{i \in \{1,2\}}$ be corresponding solutions to \eqref{eq:model} in the sense of Definition~\ref{def:Weak solution}. Under the additional assumption of increased regularity for the fluxes $\bm q_i \in L^2\big([0,T];H_0(\Div,\Omega)\big)$, there exists a constant $C>0$ independent of the solutions such that
\begin{align}
 &\underset{t\in[0,T]}{\mathrm{ess\, sup}}\,\|\pf_1(t) - \pf_2(t)\|_{\left(H^1(\Omega) \cap L_0^2(\Omega) \right)'}^2 + \underset{t\in[0,T]}{\mathrm{ess\, sup}}\,\|\theta_1(t) - \theta_2(t)\|_{\left(H^1(\Omega) \cap L_0^2(\Omega) \right)'}^2 \nonumber \\
 &\quad+
 \int_0^T \bigg[
 \|\pf_1 - \pf_2\|_{H^1(\Omega)}^2 
 + \|\mu_1 - \mu_2\|_{H^1(\Omega)'}^2
 + \|\theta_1 - \theta_2\|_{L^2(\Omega)}^2 
 + \|\bm{u}_1 - \bm{u}_2\|_{\vecSpH}^2 
 + \|\bm{q}_1 - \bm{q}_2\|_{H(\Div,\Omega)'}^2 \, 
 \bigg]dt \nonumber \\
 &\leq C \bigg[\int_0^T \left(
 \|R_1 - R_2\|_{L^2(\Omega)}^2
 + \|\bm{f}_1 - \bm{f}_2\|_{\bm{H^{-1}(\Omega)}}^2 
 + \|S_{f_1} - S_{f_2}\|_{L^2(\Omega)}^2
 \right) dt \nonumber\\
 &\qquad
 + \int_0^T
 \left\|\left(\pf_{0,1}^\Omega - \pf_{0,2}^\Omega\right) + t \left(R_1^\Omega - R_2^\Omega\right) \right\|_{L^2(\Omega)}^2 dt
 + \|\pf_{0,1} - \pf_{0,2}\|_{L^2(\Omega)}
 + \|\theta_{0,1} - \theta_{0,2}\|_{\left(H^1(\Omega) \cap L_0^2(\Omega) \right)'}^2 
\bigg]
\label{eq:continuity}
\end{align}
where $(\cdot)^\Omega := |\Omega|^{-1}\int_\Omega(\cdot)\, dx$ denotes the mean value.
\end{theorem}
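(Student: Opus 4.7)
The plan is to establish continuous dependence via an energy-style estimate for the difference of the two solution quintuples, testing each difference equation with carefully chosen functions and applying Gr\"onwall's lemma. Writing $\tilde v := v_1 - v_2$ for any unknown, assumptions (B2)--(B4) render all constitutive laws affine, and the only remaining nonlinearity is the Lipschitz $\Psi'$-contribution by (B1). As a preliminary, integrating the difference of \eqref{eq:ch1weak} against the constant test function~$1$ yields the explicit mean-value relation $\tilde\pf^\Omega(t) = \tilde\pf_0^\Omega + t\,\tilde R^\Omega$; analogously, $\tilde\theta^\Omega(t) = \tilde\theta_0^\Omega + t\,\tilde S_f^\Omega$ follows from \eqref{eq:flowweak} using $\bm q \cdot \bm n = 0$. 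These provide precisely the mean-value terms appearing on the right-hand side of~\eqref{eq:continuity}. Setting $w_1 := \tilde\pf - \tilde\pf^\Omega$ and $w_2 := \tilde\theta - \tilde\theta^\Omega$, both in $L_0^2(\Omega)$, and introducing the Neumann inverse Laplacians $v_i \in H^1(\Omega) \cap L_0^2(\Omega)$ defined by $(\nabla v_i, \nabla\phi) = (w_i, \phi)$ for all $\phi \in H^1(\Omega) \cap L_0^2(\Omega)$, one has $\|\nabla v_i\|_{\vecSpL}^2 = \|w_i\|_{(H^1(\Omega) \cap L_0^2(\Omega))'}^2$, precisely the dual norm on the left-hand side of~\eqref{eq:continuity}.

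The testing strategy is as follows: test the difference of \eqref{eq:ch1weak} with $v_1$, using $\langle\partial_t w_1, v_1\rangle = \tfrac{1}{2}\tfrac{d}{dt}\|\nabla v_1\|^2$ and the Neumann identity $(\nabla\tilde\mu, \nabla v_1) = (\tilde\mu, w_1)$; test the difference of \eqref{eq:ch2weak} with $w_1$, which isolates $(\tilde\mu, w_1)$ and produces $\gamma\ell\|\nabla\tilde\pf\|_{\vecSpL}^2$ along with Lipschitz $\Psi'$-terms and the linear elasticity coupling $-(\bm{\hat\varepsilon}\!:\!\mathbb{C}(\bm{\varepsilon}(\tilde{\bm u}) - \bm{\hat\varepsilon}\tilde\pf), w_1)$; test the difference of \eqref{eq:elasticityweak} (at $\regularization = 0$) with $\tilde{\bm u}$, substituting $\tilde p = M(\tilde\theta - \alpha\Div\tilde{\bm u})$, to obtain $(\mathbb{C}\bm{\varepsilon}(\tilde{\bm u}), \bm{\varepsilon}(\tilde{\bm u})) + M\alpha^2\|\Div\tilde{\bm u}\|^2$ on the left; test the difference of \eqref{eq:flowweak} with $v_2$ to yield $\tfrac{1}{2}\tfrac{d}{dt}\|\nabla v_2\|^2 + (\Div\tilde{\bm q}, v_2)$; and test the difference of \eqref{eq:darcyflowweak} with $\tilde{\bm q}$ to obtain $\kappa^{-1}\|\tilde{\bm q}\|_{\vecSpL}^2 = (\tilde p, \Div\tilde{\bm q})$. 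Two auxiliary bounds close the system: $\|\tilde p - \tilde p^\Omega\|_{L^2(\Omega)} \leq C_P\kappa^{-1}\|\tilde{\bm q}\|_{\vecSpL}$ from Darcy combined with Poincar\'e--Wirtinger, noting $\tilde p^\Omega = M\tilde\theta^\Omega$ (since $\Div\tilde{\bm u}$ has zero mean by the Dirichlet boundary condition); and $M w_2 = (\tilde p - \tilde p^\Omega) + M\alpha\Div\tilde{\bm u}$, giving $\|w_2\|_{L^2(\Omega)}^2 \leq C(\|\tilde{\bm q}\|_{\vecSpL}^2 + \|\bm{\varepsilon}(\tilde{\bm u})\|_{\tensSpL}^2)$ and thereby controlling $\|\tilde\theta\|_{L^2(\Omega)}^2$ through coercive quantities and data.

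Summing all resulting inequalities, absorbing the cross-coupling terms via Young's inequality with sufficiently small parameters, Poincar\'e's inequality for the zero-mean $w_1, w_2$, and Korn's inequality for $\tilde{\bm u}$, one arrives at a differential inequality of the schematic form
\begin{equation*}
\tfrac{d}{dt}\!\left(\|\nabla v_1\|^2 + \|\nabla v_2\|^2\right) + c\!\left(\|\nabla\tilde\pf\|^2 + \|\bm{\varepsilon}(\tilde{\bm u})\|^2 + \|\tilde{\bm q}\|^2 + \|\tilde\theta\|^2\right) \leq C\!\left(\|\nabla v_1\|^2 + \|\nabla v_2\|^2\right) + C\,(\mathrm{data}),
\end{equation*}
to which Lemma~\ref{lemma:Grönwall} applies, yielding all bounds of~\eqref{eq:continuity}. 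The chemical potential estimate $\|\tilde\mu\|_{H^1(\Omega)'} \leq C(\|\nabla\tilde\pf\|_{\vecSpL} + \|\tilde\pf\|_{L^2(\Omega)} + \|\bm{\varepsilon}(\tilde{\bm u})\|_{\tensSpL})$ follows from testing the difference of \eqref{eq:ch2weak} against arbitrary $\phi \in H^1(\Omega)$, while $\|\tilde{\bm q}\|_{H(\Div,\Omega)'} \leq \|\tilde{\bm q}\|_{\vecSpL}$ is immediate. The principal obstacle is the non-sign-definite fluid--elasticity coupling $M\alpha(\tilde\theta, \Div\tilde{\bm u})$: since $\regularization = 0$ precludes testing \eqref{eq:elasticityweak} with $\partial_t\tilde{\bm u}$, the standard Biot energy identity is unavailable; the resolution exploits the algebraic identity $\tilde p = M(\tilde\theta - \alpha\Div\tilde{\bm u})$ to trade this coupling for $\|w_2\|_{L^2(\Omega)}^2$, which is in turn controlled via the Darcy--Poincar\'e bound on $\tilde p - \tilde p^\Omega$.
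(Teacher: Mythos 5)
Your overall architecture --- difference system, dual-norm testing via inverse Laplacians for $\tilde\pf$ and $\tilde\theta$, testing the momentum equation with $\tilde{\bm u}$, Gr\"onwall, and post-processing for $\tilde\mu$ and $\tilde{\bm q}$ --- matches the paper's. The decisive step, however, is handled differently, and your version has a gap. The paper observes that summing the three tested equations yields
\[
\tfrac{d}{dt}\Bigl[\tfrac12\bigl(-\Delta_m^{-1}\tilde\pf,\tilde\pf\bigr)+\tfrac12\bigl(-\Delta_\kappa^{-1}\tilde\theta,\tilde\theta\bigr)\Bigr]
+\bigl(\delta\mathcal{E}(X_1)-\delta\mathcal{E}(X_2),X_1-X_2\bigr)=\text{data terms},
\]
and that the second term equals the nonnegative quantity $2\mathcal{E}_{\mathrm{quad}}(\tilde X)=\gamma\ell\|\nabla\tilde\pf\|^2+\bigl(\mathbb{C}(\bm\varepsilon(\bm{\tilde u})-\bm{\hat\varepsilon}\tilde\pf),\bm\varepsilon(\bm{\tilde u})-\bm{\hat\varepsilon}\tilde\pf\bigr)+M\|\tilde\theta-\alpha\Div\bm{\tilde u}\|^2$ plus the Lipschitz $\Psi'$ remainder: by convexity of the quadratic energy, the indefinite fluid--elasticity coupling $M\alpha(\tilde\theta,\Div\bm{\tilde u})$ is already absorbed into a perfect square and never has to be estimated. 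You instead propose to bound that coupling by trading it for $\|w_2\|^2\le C(\|\bm{\tilde q}\|^2+\|\bm\varepsilon(\bm{\tilde u})\|^2)$ via the Darcy--Poincar\'e bound on $\tilde p-\tilde p^\Omega$, and then to absorb. The constant $C$ is fixed by $M,\alpha,\kappa,C_P$ and is not small; to absorb $C\|\bm{\tilde q}\|^2$ you would need a coercive $\|\bm{\tilde q}\|^2$ term on the left with a larger constant, but testing \eqref{eq:darcyflowweak} with $\bm{\tilde q}$ only returns the tautology $\kappa^{-1}\|\bm{\tilde q}\|^2=(\tilde p,\Div\bm{\tilde q})=\kappa^{-1}\|\bm{\tilde q}\|^2$ and produces no net dissipation --- indeed the theorem only claims, and the paper only proves, the weaker $H(\Div,\Omega)'$ bound on $\bm{\tilde q}$, obtained by post-processing. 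So the schematic differential inequality you write down, with $c\|\bm{\tilde q}\|^2$ and $c\|\tilde\theta\|^2$ on the left and only absorbable multiples of these on the right, is not reachable by the manipulations you describe without an unstated smallness condition on the material parameters.

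Two further points. First, the same absorption problem recurs for the double-well term: $\frac{\gamma}{\ell}(\Psi'(\pf_1)-\Psi'(\pf_2),\tilde\pf)$ produces $\frac{\gamma L_{\Psi'}}{\ell}\|\tilde\pf\|_{L^2(\Omega)}^2$, which cannot be absorbed by $\gamma\ell\|\nabla\tilde\pf\|^2$ via Poincar\'e--Wirtinger alone; the paper needs the interpolation $\|\tilde\pf-\tilde\pf^\Omega\|_{L^2(\Omega)}^2\le\sqrt{(m\nabla\tilde\pf,\nabla\tilde\pf)}\cdot\sqrt{(-\Delta_m^{-1}\tilde\pf,\tilde\pf)}$ so that Young's inequality shifts the large constant onto the dual norm, which Gr\"onwall then handles. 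Your proposal does not mention this step. Second, routing $\|\tilde\theta\|_{L^2(\Omega)}$ through $\|w_2\|+|\tilde\theta^\Omega|$ introduces $|\tilde\theta_0^\Omega|$, which is not controlled by the $\bigl(H^1(\Omega)\cap L_0^2(\Omega)\bigr)'$ norm of $\tilde\theta_0$ appearing on the right of \eqref{eq:continuity}; the paper extracts $\|\tilde\theta\|_{L^2(\Omega)}$ directly from the term $M\|\tilde\theta-\alpha\Div\bm{\tilde u}\|^2$ of the energy seminorm. The fix for all of these is to keep the sum of the tested equations in the gradient-flow form displayed above and exploit convexity of $\mathcal{E}_{\mathrm{quad}}$, rather than estimating the coupling terms individually.
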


A direct consequence of the continuous dependence property is the uniqueness of weak solutions.
\begin{theorem}[Uniqueness of weak solutions]
Let assumptions (A5)--(A6) and (B1)--(B4) hold. Then there exists at most one quintuple $(\pf, \mu, \bm u, \theta, \bm q)$ being the weak solution to~\eqref{eq:model} in the sense of Definition~\ref{def:Weak solution}, under the additional assumption of increased regularity for the flux $\bm q \in L^2\big([0,T];H(\nabla\cdot,\Omega)\big)$.
\end{theorem}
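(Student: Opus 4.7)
The plan is to deduce the uniqueness statement directly from the continuous dependence estimate of Theorem~\ref{thm:cdweak}, which already encapsulates essentially all of the analytical work. Given any two weak solutions $(\pf_i, \mu_i, \bm{u}_i, \theta_i, \bm{q}_i)$, $i=1,2$, associated with a single set of data $(R, \bm{f}, S_f, \pf_0, \theta_0)$ complying with (A5)--(A6), I would set $R_1 = R_2$, $\bm{f}_1 = \bm{f}_2$, $S_{f,1} = S_{f,2}$, $\pf_{0,1} = \pf_{0,2}$ and $\theta_{0,1} = \theta_{0,2}$ in~\eqref{eq:continuity}. All five contributions on the right-hand side then vanish, including the mean-value terms in $\pf_{0,i}^\Omega$ and $R_i^\Omega$. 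Since each summand on the left-hand side is nonnegative, every one of them must individually equal zero.

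From vanishing of $\|\pf_1 - \pf_2\|_{L^2([0,T];H^1(\Omega))}^2$, $\|\theta_1 - \theta_2\|_{L^2([0,T];L^2(\Omega))}^2$ and $\|\bm{u}_1 - \bm{u}_2\|_{L^2([0,T];\vecSpH)}^2$, we immediately get $\pf_1 = \pf_2$, $\theta_1 = \theta_2$ and $\bm{u}_1 = \bm{u}_2$ a.e.~in $\Omega \times [0,T]$. The only additional step concerns $\mu$ and $\bm{q}$, whose continuous-dependence controls are phrased in the dual norms $H^1(\Omega)'$ and $H(\Div,\Omega)'$. Here the regularity built into Definition~\ref{def:Weak solution} together with the extra assumption $\bm{q}_i \in L^2([0,T]; H(\Div,\Omega))$ saves us: both $\mu_1 - \mu_2$ and $\bm{q}_1 - \bm{q}_2$ lie, for a.e.\ $t$, in $L^2(\Omega)$ and $\vecSpL$, respectively, and are naturally identified with elements of the corresponding dual spaces via the $L^2$-inner product. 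Since $H^1(\Omega)$ is dense in $L^2(\Omega)$ and $H(\Div,\Omega)$ is dense in $\vecSpL$ (both contain $C_c^\infty(\Omega)^{(d)}$), vanishing of the dual norms upgrades to $\mu_1 = \mu_2$ and $\bm{q}_1 = \bm{q}_2$ a.e.~in $\Omega \times [0,T]$.

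There is, in fact, no real obstacle specific to this theorem: the hard work is entirely contained in the continuous dependence estimate, which in turn rests on the gradient-flow-based energy identity and the simplifying assumptions (B1)--(B4). The only point requiring slight care is the dual-norm-to-pointwise upgrade described above, and the resulting uniqueness result is therefore only as strong as Theorem~\ref{thm:cdweak} itself, namely restricted to phase-independent material parameters, absent secondary consolidation, and with fluxes enjoying the extra $H(\Div,\Omega)$ regularity.
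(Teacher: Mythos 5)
Your proposal is correct and follows exactly the paper's route: the paper states uniqueness as a direct corollary of the continuous dependence estimate \eqref{eq:continuity}, obtained by taking identical data so that the right-hand side vanishes. Your additional care in upgrading the vanishing dual norms $\|\mu_1-\mu_2\|_{H^1(\Omega)'}$ and $\|\bm q_1-\bm q_2\|_{H(\Div,\Omega)'}$ to a.e.\ equality via density of $H^1(\Omega)$ in $L^2(\Omega)$ and of $H(\Div,\Omega)$ in $\vecSpL$ is a valid (and welcome) filling-in of a detail the paper leaves implicit.
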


\begin{proof}[Proof of Theorem~\ref{thm:cdweak}]
In the following, we utilize auxiliary problems to define suitable test functions, which are related to the dual of the inherent dissipation potentials used above. In particular, we implicitly utilize the generalized gradient flow structure also depicted in~\cite{Storvik2022}.

We introduce the shorthand notation for the differences 
\begin{align*}
    \tilde{R}:= R_1 - R_2,\quad 
    \tilde{\bm{f}} := \bm{f}_1 - \bm{f}_2,\quad
    \tilde{S}_{f}:= S_{f_1} - S_{f_2},\quad
    \tilde{\pf}_0 := \pf_{0,1} - \pf_{0,2},\quad 
    \tilde{\theta}_0 := \theta_{0,1} - \theta_{0,2},
\end{align*}
as well as 
\begin{align}
\label{eq:diff-quintuple}
 \tilde{\pf} := \pf_1 - \pf_2, \quad \tilde{\mu} := \mu_1 - \mu_2, \quad \bm{\tilde u} := \bm u_1 - \bm u_2, \quad \tilde{\theta} := \theta_1 - \theta_2, \quad \bm{\tilde q} := \bm q_1 - \bm q_2,
\end{align}
satisfying
\begin{align*} 
\tilde\pf \in L^\infty\big(&[0,T];H^1(\Omega)\big)\cap H^1\big([0,T];H^1(\Omega)'\big), \quad \tilde\mu \in L^2\big([0,T];H^1(\Omega)\big), \quad \bm{\tilde u} \in L^\infty\big([0,T];\vecSpHTr\big), \\
&\tilde\theta \in L^\infty\big([0,T];L^2(\Omega)\big)\cap H^1\big([0,T];H^1(\Omega)'\big), \quad \bm{\tilde q} \in L^2\big([0,T];H(\nabla\cdot,\Omega)\big).
\end{align*}
We obtain the difference system by subtracting~\eqref{eq:weak} for both solutions
\begin{subequations}
\begin{align}
 \left\langle \partial_t \tilde \pf, \tilde\eta^\mathrm{ch} \right\rangle + \big( m \nabla\tilde\mu, \nabla\tilde\eta^\mathrm{ch} \big) &= \big( \tilde R, \tilde\eta^\mathrm{ch} \big)
 \label{eq:ch1weakdiffcd}\\
 \big( \tilde\mu,\tilde\eta^\mathrm{ch}\big) - \gamma\ell\big(\nabla\tilde\pf,\nabla\tilde\eta^\mathrm{ch}\big) - \frac{\gamma}{\ell} \big( \Psi'(\pf_1) - \Psi'(\pf_2),\tilde\eta^\mathrm{ch}\big) + \big( \bm{\hat \varepsilon}\!:\!\mathbb{C}\big(\bm \varepsilon(\bm{\tilde u}) - \bm{\hat \varepsilon}\tilde\pf\big),\tilde\eta^\mathrm{ch}\big) &= 0
 \label{eq:ch2weakdiffcd}\\
 \big(\mathbb{C}\big(\bm \varepsilon(\bm{\tilde u}) - \bm{\hat \varepsilon}\tilde\pf\big),\bm \varepsilon(\bm{\tilde\eta^{\bm u}})\big) + \big(-M\alpha\tilde\theta\bm{I}+M\alpha^2\Div\bm{\tilde u}\bm{I},\bm \varepsilon(\bm{\tilde\eta^{\bm u}})\big) &= \langle \bm{\tilde f},\bm{\tilde\eta^{\bm u}} \rangle 
 \label{eq:elasticityweakdiffcd}\\
 \left\langle\partial_t\tilde\theta,\tilde\eta^{\theta}\right\rangle + \big(\nabla\cdot \bm{\tilde q},\tilde\eta^{\theta}\big) &= \big( \tilde S_f,\tilde\eta^{\theta}\big)
 \label{eq:flowweakdiffcd}\\
 \big( \kappa^{-1} \bm{\tilde q},\bm{\tilde\eta^{\bm q}} \big) - \big( M(\tilde\theta - \alpha\Div\bm{\tilde u}),\nabla \cdot \bm{\tilde\eta^{\bm q}} \big) &= 0 
 \label{eq:darcyflowweakdiffcd} 
\end{align}
\label{eq:weakdiffcd}%
\end{subequations}
holding for all $\tilde\eta^\mathrm{ch} \in H^1(\Omega)$, $\bm{\tilde\eta^{\bm u}} \in \vecSpHTr$, $\tilde\eta^\theta \in L^2(\Omega)$, and $\bm{\tilde\eta^{\bm q}} \in H(\nabla\cdot,\Omega)$ and for a.e.~$t \in [0,T]$. In addition, the initial conditions are met $\tilde{\pf}(0) = \tilde{\pf}_0$ and $\tilde{\theta}(0) = \tilde{\theta}_0$ in the sense of Definition~\ref{def:Weak solution}.

From~\eqref{eq:ch1weakdiffcd}, we deduce, by testing with the non-zero, constant function $\tilde\eta^\mathrm{ch} = |\Omega|^{-1}$ that the mean value of $\tilde{\varphi}$ satisfies $\partial_t \tilde{\varphi}^\Omega = \tilde{R}^\Omega$, such that
\begin{align}
\label{eq:mean-phi-tilde}
    \tilde{\varphi}^\Omega(t) = \tilde{\varphi}^\Omega(0) + t \tilde{R}^\Omega = \tilde{\varphi}_0^\Omega + t \tilde{R}^\Omega,\quad t\in [0,T].
\end{align}
Under assumptions (A5)--(A6), $\left|\tilde{\varphi}^\Omega\right|$ is finite. Using orthogonality arguments and the Poincar\'e-Wirtinger inequality, introducing a constant $C_P > 0$, we obtain
\begin{align}
\label{eq:poincare-wirtinger}
    \| \tilde{\varphi} \|_{L^2(\Omega)}^2  =\| \tilde{\varphi} - \tilde{\varphi}^\Omega\|_{L^2(\Omega)}^2 + \mathbox{\tilde{\varphi}^\Omega}^2 |\Omega| \leq C_P \|\nabla \tilde{\varphi} \|_{L^2(\Omega)}^2 + \mathbox{\tilde{\varphi}^\Omega}^2 |\Omega|.
\end{align}

With the goal of reducing mixed formulations of subproblems in~\eqref{eq:weakdiffcd} to primal formulations, utilizing the inherent gradient flow structures, we choose the test functions $\tilde{\eta}^\mathrm{ch}$, $\bm{\tilde\eta^{\bm u}}$ and $\tilde{\eta}^\theta$ as follows:
\begin{itemize}
\item Consider the unique solution $-\Delta_m^{-1} \tilde\varphi := \hat{\mu} \in H^1(\Omega) \cap L_0^2(\Omega)$ of the auxiliary problem
\begin{align}
 \big( m \nabla\hat\mu, \nabla\hat\eta^\mathrm{ch} \big) &= \big( \tilde{\varphi}, \hat\eta^\mathrm{ch} \big)
\label{eq:auxproblem-ch-phi}
\end{align}
for all $\hat\eta^\mathrm{ch} \in H^1(\Omega) \cap L_0^2(\Omega)$. We then set $\tilde{\eta}^\mathrm{ch} = -\Delta_m^{-1} \tilde\varphi \in H^1(\Omega)$. Testing~\eqref{eq:auxproblem-ch-phi} with $\hat\eta^\mathrm{ch} = \tilde{\varphi} - \tilde{\varphi}^\Omega$, allows to deduce the duality property
\begin{align}
 \left\| \tilde{\varphi} - \tilde{\varphi}^\Omega \right\|_{L^2(\Omega)}^2 \leq \sqrt{\big( m \nabla \tilde{\varphi}, \nabla \tilde{\varphi} \big)} \cdot \sqrt{\big( -\Delta_m^{-1} \tilde{\varphi}, \tilde{\varphi}\big)}.
\label{eq:duality-property}
\end{align}
In addition, $-\Delta_m^{-1}$ induces a natural inner product on $\left(H^1(\Omega)\cap L_0^2(\Omega) \right)'$, satisfying a Cauchy-Schwarz-type inequality
\begin{align}
 \big( -\Delta_m^{-1} \tilde{\varphi}, \tilde{\psi} \big) \leq \sqrt{\big( -\Delta_m^{-1} \tilde{\varphi}, \tilde{\varphi} \big)} \cdot \sqrt{\big( -\Delta_m^{-1} \tilde{\psi}, \tilde{\psi} \big)}
\label{eq:cauchy-schwarz-auxiliary-problem}
\end{align}
and inducing a norm on $\left(H^1(\Omega)\cap L_0^2(\Omega) \right)'$.

\item Let $\bm{\tilde{\eta}^{\bm{u}}} = \bm{\tilde u} \in \vecSpHTr$.

\item Consider the unique solution $(\bm{\hat q}, \hat{\theta}) \in H_0(\Div, \Omega) \times L_0^2(\Omega)$ of the auxiliary problem
\begin{subequations}
\begin{align}
 \big( \kappa^{-1} \bm{\hat q}, \bm{\hat{\eta}^{\bm{q}}} \big)  - \big( \hat{\theta}, \nabla \cdot \bm{\hat{\eta}^{\bm{q}}} \big) &=0 \label{eq:aux-mixed-problem-darcy-flux} \\
 \big( \nabla \cdot \bm{\hat q}, \hat{\eta}^{\theta} \big) &= \big( \tilde{\theta}, \hat{\eta}^\theta \big) \label{eq:aux-mixed-problem-darcy-theta}%
\end{align}
\end{subequations}
for all $(\bm{\hat{\eta}^{\bm{q}}}, \hat{\eta}^\theta) \in H(\Div; \Omega) \times L_0^2(\Omega)$. In compact notation, we define $-\Delta_\kappa^{-1} \tilde{\theta}:= \hat{\theta}$. We then choose the test function $\tilde{\eta}^\theta = -\Delta_\kappa^{-1} \tilde{\theta} \in L^2(\Omega)$.
\end{itemize}

With the use of these test functions, we make the following observations, important to utilize the gradient flow structure where we introduce the shorthand notation $X_i:= (\varphi_i, \bm{\varepsilon(\bm{u}_i)}, \theta_i)$, $i=1,2$:
\begin{itemize}
\item Testing \eqref{eq:auxproblem-ch-phi} with $\hat{\eta}^{\mathrm{ch}} = \tilde{\mu}$ and \eqref{eq:ch2weakdiffcd} with $\tilde{\eta}^\mathrm{ch} = \tilde{\varphi}$, yields
\begin{align*}
 \big( m\nabla \tilde{\mu}, \nabla \hat{\mu} \big) \overset{\eqref{eq:auxproblem-ch-phi}}&{=} \big(\tilde{\mu}, \tilde{\varphi} \big) \overset{\eqref{eq:ch2weakdiffcd}}{=} \gamma\ell \big(\nabla\tilde\pf,\nabla \tilde\varphi \big) + \frac{\gamma}{\ell} \big( \Psi'(\pf_1) - \Psi'(\pf_2),\tilde\varphi\big) - \big( \bm{\hat \varepsilon}\!:\!\mathbb{C}\big(\bm \varepsilon(\bm{\tilde u}) -\bm{\hat \varepsilon}\tilde\pf\big),\tilde\varphi \big) \\
 &= \big( \delta_\varphi \mathcal{E}(X_1) - \delta_\varphi \mathcal{E}(X_2), \tilde\varphi \big).
\end{align*}

\item Testing \eqref{eq:ch1weakdiffcd} with $\tilde{\eta}^\mathrm{ch} = -\Delta_m^{-1} \tilde{\varphi} = \hat{\mu}$ yields (together with the above identity)
\begin{align*}
 \left\langle \partial_t \tilde \pf, -\Delta_m^{-1} \tilde{\varphi} \right\rangle +  \big( \delta_\varphi \mathcal{E}(X_1) - \delta_\varphi \mathcal{E}(X_2), \tilde\varphi \big) = \big( \tilde R, -\Delta_m^{-1} \tilde{\varphi} \big).
\end{align*}

\item Considering the left hand side of \eqref{eq:elasticityweakdiffcd} with $\bm{\tilde{\eta}^{\bm{u}}} = \bm{\tilde u}$ yields
\begin{align*}
 \big(\mathbb{C}\big(\bm \varepsilon(\bm{\tilde u}) - \bm{\hat \varepsilon}\tilde\pf\big),\bm \varepsilon(\bm{\tilde\eta^{\bm u}})\big) + \big(-M\alpha\tilde\theta\bm{I}+M\alpha^2\Div\bm{\tilde u}\bm{I},\bm \varepsilon(\bm{\tilde\eta^{\bm u}})\big) = \big( \delta_{\bm{\varepsilon}}\mathcal{E}(X_1) - \delta_{\bm{\varepsilon}}\mathcal{E}(X_2), \bm{\varepsilon}(\bm{\tilde u}) \big).
\end{align*}

\item Testing \eqref{eq:aux-mixed-problem-darcy-flux} with $\bm{\hat{\eta}^{\bm{q}}} = \bm{\tilde q}$, \eqref{eq:darcyflowweakdiffcd} with $\bm{\tilde{\eta}^{\bm{q}}} = \bm{\hat q}$, and~\eqref{eq:aux-mixed-problem-darcy-theta} with $\hat{\eta}^\theta = \delta_\theta \mathcal{E}(X_1) - \delta_\theta \mathcal{E}(X_2) \in L_0^2(\Omega)$ allows to show
\begin{align*}
 \big( \nabla \cdot \bm{\tilde q}, \hat\theta \big) \overset{\eqref{eq:aux-mixed-problem-darcy-flux}}&{=} \big( \kappa^{-1} \bm{\tilde q}, \bm{\hat q} \big) \overset{\eqref{eq:darcyflowweakdiffcd}}{=} \big( M(\tilde\theta - \alpha\Div\bm{\tilde u}),\nabla \cdot \bm{\hat q} \big) = \big( \delta_\theta \mathcal{E}(X_1) - \delta_\theta \mathcal{E}(X_2), \nabla \cdot \bm{\hat q} \big) \\
 \overset{\eqref{eq:aux-mixed-problem-darcy-theta}}&{=} \big( \delta_\theta \mathcal{E}(X_1) - \delta_\theta \mathcal{E}(X_2), \tilde{\theta} \big).
\end{align*}

\item Testing \eqref{eq:flowweakdiffcd} with $\tilde{\eta}^\theta = -\Delta_\kappa^{-1} \tilde{\theta} = \hat{\theta}$ yields (together with the above identity)
\begin{align*}
 \left\langle \partial_t \tilde \theta, -\Delta_\kappa^{-1} \tilde{\theta} \right\rangle +  \big( \delta_\theta \mathcal{E}(X_1) - \delta_\theta \mathcal{E}(X_2), \tilde\theta \big) = \big( \tilde S_f, -\Delta_\kappa^{-1} \tilde{\theta} \big).
\end{align*}
\end{itemize}
Finally, we summarize. Inserting $\tilde{\eta}^\mathrm{ch} = -\Delta_m^{-1} \tilde{\varphi}$, $\bm{\tilde{\eta}^{\bm{u}}} = \bm{\tilde u}$ and $\tilde{\eta}^\theta = - \Delta_\kappa^{-1} \tilde{\theta}$ into the equations~\eqref{eq:ch1weakdiffcd}, \eqref{eq:elasticityweakdiffcd}, and \eqref{eq:flowweakdiffcd}, summing them up, and utilizing the above observations, yields
\begin{align}
 \left\langle \partial_t \tilde{\varphi},  -\Delta_m^{-1} \tilde{\varphi} \right\rangle + \left\langle \partial_t \tilde{\theta},  -\Delta_\kappa^{-1} \tilde{\theta} \right\rangle + \big( \delta_{(\varphi, \bm \varepsilon, \theta)} \mathcal{E}(X_1) &- \delta_{(\varphi, \bm \varepsilon, \theta)} \mathcal{E}(X_2), X_1 - X_2 \big) \nonumber \\
 &= \big( \tilde{R}, -\Delta_m^{-1} \tilde{\varphi} \big) + \langle \bm{\tilde f}, \bm{\tilde u} \rangle  + \big( \tilde{S}_f, -\Delta_\kappa^{-1} \tilde{\theta} \big).
\label{eq:gradient-flow-tested}
\end{align}
Due to linearity and self-adjointness of $-\Delta_m^{-1}$ and $-\Delta_\kappa^{-1}$, it holds that
\begin{align}
 \left\langle \partial_t \tilde{\varphi},  -\Delta_m^{-1} \tilde{\varphi} \right\rangle + \left\langle \partial_t \tilde{\theta},  -\Delta_\kappa^{-1} \tilde{\theta} \right\rangle = \frac{d}{dt} \left(\frac{1}{2} \big( -\Delta_m^{-1} \tilde{\varphi} , \tilde{\varphi} \big) + \frac{1}{2} \big( -\Delta_\kappa^{-1} \tilde{\theta}, \tilde{\theta} \big) \right).
\label{eq:sub:gradient-flow-tested:dissipation}
\end{align}
Furthermore, under the given assumptions, the energy $\mathcal{E}$ can be decomposed as follows
\begin{align*}
 \mathcal{E}(\varphi, \bm{\varepsilon}(\bm{u}), \theta) = \mathcal{E}_{\mathrm{quad}}(\varphi, \bm{\varepsilon}(\bm{u}), \theta) + \frac{\gamma}{\ell} \int_\Omega \Psi(\varphi) \;dx
\end{align*}
with the quadratic contribution 
\begin{align*}
 \mathcal{E}_{\mathrm{quad}}(\varphi, \bm{\varepsilon}(\bm{u}), \theta) = \gamma \frac{\ell}{2} \big( \nabla \varphi, \nabla \varphi \big) + \frac{1}{2} \big( \left(\bm{\varepsilon}(\bm{u}) - \mathcal{T}(\varphi)\right),\mathbb{C}\left(\bm{\varepsilon}(\bm{u}) - \mathcal{T}(\varphi)\right) \big) + \frac{1}{2} \big( M(\theta - \alpha \nabla \cdot \bm{u}), \theta - \alpha \nabla \cdot \bm{u} \big),
\end{align*}
also defining a semi-norm on $H^1(\Omega) \times \bm{H_0^1(\Omega)} \times L^2(\Omega)$. The latter can be shown by employing binomial identities together with
\begin{align}
 |||X_1 - X_2 |||^2 := \big( \delta_{(\varphi, \bm \varepsilon, \theta)} \mathcal{E}_\mathrm{quad}(X_1) - \delta_{(\varphi, \bm \varepsilon, \theta)} \mathcal{E}_\mathrm{quad}(X_2), X_1 - X_2 \big)
\label{eq:sub:gradient-flow:norm}
\end{align}
(here for arbitrary $X_i$).
Utilizing assumption (B1), \eqref{eq:poincare-wirtinger} and~\eqref{eq:duality-property} together with Young's inequality and the Cauchy-Schwarz inequality, we obtain  
\begin{align}
\label{eq:sub:gradient-flow-tested:energy}
 \big( &\delta_{(\varphi, \bm \varepsilon, \theta)} \mathcal{E}(X_1) - \delta_{(\varphi, \bm \varepsilon, \theta)} \mathcal{E}(X_2), X_1 - X_2 \big) \nonumber\\
 &= |||X_1 - X_2 |||^2 + \frac{\gamma}{\ell}\int_\Omega (\Psi'(\varphi_1) - \Psi'(\varphi_2))\tilde{\varphi} \;dx \nonumber \\
 \overset{\text{(B1)}}&{\geq} |||X_1 - X_2 |||^2 - \frac{\gamma L_{\Psi'}}{\ell} \|\tilde{\varphi}\|_{L^2(\Omega)}^2 \nonumber\\
 \overset{\eqref{eq:poincare-wirtinger}}&{=} |||X_1 - X_2 |||^2 - \frac{\gamma L_{\Psi'}}{\ell} \|\tilde{\varphi} - \mathbox{\tilde{\varphi}^\Omega}\|_{L^2(\Omega)}^2 - \frac{\gamma L_{\Psi'}}{\ell} |\Omega| \mathbox{\tilde{\varphi}^\Omega}^2\nonumber\\
 \overset{\eqref{eq:duality-property}}&{\geq} |||X_1 - X_2 |||^2 - \frac{\gamma L_{\Psi'}}{\ell} \sqrt{\big( m \nabla \tilde{\varphi}, \nabla \tilde{\varphi} \big)} \cdot \sqrt{\big( -\Delta_m^{-1} \tilde{\varphi}, \tilde{\varphi}\big)} - \frac{\gamma L_{\Psi'}}{\ell} |\Omega| \mathbox{\tilde{\varphi}^\Omega}^2 \nonumber\\
 &\geq |||X_1 - X_2 |||^2 - \gamma\frac{\ell}{4} \|\nabla \tilde\varphi\|_{\vecSpL}^2 - \frac{m \gamma L_{\Psi'}^2}{\ell^3} \big( -\Delta_m^{-1} \tilde{\varphi}, \tilde{\varphi} \big) - \frac{\gamma L_{\Psi'}}{\ell} |\Omega| \mathbox{\tilde{\varphi}^\Omega}^2.
\end{align}

For the right hand side of~\eqref{eq:gradient-flow-tested}, we employ the Cauchy-Schwarz-type inequality~\eqref{eq:cauchy-schwarz-auxiliary-problem}, as, e.g., 
\begin{align*}
 \big( \tilde{R}, -\Delta_m^{-1} \tilde{\varphi} \big) &= \big( \tilde{R} - \tilde{R}^\Omega, -\Delta_m^{-1} \tilde{\varphi} \big) \leq \sqrt{\big( -\Delta_m^{-1} (\tilde{R} - \tilde{R}^\Omega), \tilde{R} - \tilde{R}^\Omega\big)} \cdot \sqrt{\big( -\Delta_m^{-1}\tilde{\varphi}, \tilde{\varphi}\big)} \\
 &= \sqrt{\big( -\Delta_m^{-1} \tilde{R}, \tilde{R} - \tilde{R}^\Omega\big)} \cdot \sqrt{\big( -\Delta_m^{-1}\tilde{\varphi}, \tilde{\varphi}\big)} = \sqrt{\big( -\Delta_m^{-1} \tilde{R}, \tilde{R} \big)} \cdot \sqrt{\big( -\Delta_m^{-1}\tilde{\varphi}, \tilde{\varphi}\big)}
\end{align*}
utilizing that $-\Delta_m^{-1}$ maps into $L_0^2(\Omega)$ and $-\Delta_m^{-1}(\tilde{R} - \tilde{R}^\Omega) = -\Delta_m^{-1}\tilde{R}$ as $-\Delta_m^{-1}$ is defined over test functions in $L_0^2(\Omega)$ only, eradicating differences through constants. Employing such Cauchy-Schwarz and Young's inequalities for some $\delta > 0$, we finally obtain for the right hand side of~\eqref{eq:gradient-flow-tested} that it holds
\begin{align}
 &\big( \tilde{R}, -\Delta_m^{-1} \tilde{\varphi} \big) + \langle \bm{\tilde f}, \bm{\tilde u} \rangle  + \big( \tilde{S}_f, -\Delta_\kappa^{-1} \tilde{\theta} \big) \nonumber \\
 &\quad\leq \delta \left( \big( -\Delta_m^{-1} \tilde{\varphi}, \tilde{\varphi} \big) + \|\bm{\tilde{u}}\|_{\vecSpH}^2 + \big( - \Delta_\kappa^{-1} \tilde{\theta}, \tilde{\theta} \big) \right) +\frac{1}{4\delta} \left(\big( -\Delta_m^{-1} \tilde{R}, \tilde{R} \big) + \|\bm{\tilde f}\|_{\bm{H^{-1}(\Omega)}}^2 + \big( -\Delta_\kappa^{-1} \tilde{S}_f, \tilde{S}_f \big)\right).
\label{eq:sub:gradient-flow-tested:rhs}
\end{align}

Inserting \eqref{eq:sub:gradient-flow-tested:dissipation}, \eqref{eq:sub:gradient-flow-tested:energy} and \eqref{eq:sub:gradient-flow-tested:rhs} into~\eqref{eq:gradient-flow-tested} and integrating over $[0,T]$ yields 
\begin{align*}
 &\frac{1}{2} \big( -\Delta_m^{-1} \tilde{\varphi}(T) , \tilde{\varphi}(T) \big) + \frac{1}{2} \big( -\Delta_\kappa^{-1} \tilde{\theta}(T), \tilde{\theta}(T) \big) + \int_0^T  |||X_1 - X_2|||^2  - \delta \|\bm{\tilde{u}}\|_{\vecSpH}^2 - \gamma\frac{\ell}{4} \|\nabla \tilde\varphi\|_{\vecSpL}^2 \;dt  \\
 &\quad \leq \left(\delta + \frac{m\gamma L_{\Psi'}^2}{\ell^3}\right) \int_0^T \left( \big( -\Delta_m^{-1} \tilde{\varphi}, \tilde{\varphi} \big) + \big( - \Delta_\kappa^{-1} \tilde{\theta}, \tilde{\theta} \big)\right) \;dt 
 +\frac{1}{2} \big( -\Delta_m^{-1} \tilde{\varphi}(0) , \tilde{\varphi}(0) \big) + \frac{1}{2} \big( - \Delta_\kappa^{-1} \tilde{\theta}(0), \tilde{\theta}(0) \big) \\
 &\quad \qquad+\frac{1}{4\delta}\int_0^T \left(
    \big( -\Delta_m^{-1} \tilde{R}, \tilde{R} \big) + \|\bm{\tilde f}\|_{\bm{H^{-1}(\Omega)}}^2 + \big( -\Delta_\kappa^{-1} \tilde{S}_f, \tilde{S}_f \big)\right) \;dt 
 + \frac{\gamma L_{\Psi'}}{\ell} |\Omega| \int_0^T \mathbox{\tilde{\varphi}^\Omega}^2 \;dt.
\end{align*}
Employing Grönwall's inequality, cf.\ Lemma~\ref{lemma:Grönwall}, results in 
the existence of a constant $C$ merely depending on $\delta$ and model constants satisfying
\begin{align*}
 &\underset{t\in[0,T]}{\mathrm{ess\, sup}}\,\big( -\Delta_m^{-1} \tilde{\varphi}(t) , \tilde{\varphi}(t) \big) + \underset{t\in[0,T]}{\mathrm{ess\, sup}}\, \big( -\Delta_\kappa^{-1} \tilde{\theta}(t), \tilde{\theta}(t) \big) \\
 &\qquad\qquad+ 2\int_0^T  |||X_1 - X_2|||^2 - \delta \|\bm{\tilde{u}}\|_{\vecSpH}^2 - \gamma\frac{\ell}{4} \|\nabla \tilde\varphi\|_{\vecSpL}^2 \;dt \\ 
 &\hspace{0.18cm} \leq C \bigg[\int_0^T \left(\big( -\Delta_m^{-1} \tilde{R}, \tilde{R} \big) + \|\bm{\tilde f}\|_{\bm{H^{-1}(\Omega)}}^2 + \big( -\Delta_\kappa^{-1} \tilde{S}_f, \tilde{S}_f \big)\right) \;dt 
 + \big( -\Delta_m^{-1} \tilde{\varphi}(0) , \tilde{\varphi}(0) \big) + \big( -\Delta_\kappa^{-1} \tilde{\theta}(0), \tilde{\theta}(0) \big)\\
 &\qquad \qquad + |\Omega| \int_0^T \mathbox{\tilde{\varphi}^\Omega}^2 \;dt\bigg].
\end{align*}
Identifying respective norms on $\left(H^1(\Omega) \cap L_0^2(\Omega) \right)'$
based on the auxiliary operators $-\Delta_m^{-1}$ and $-\Delta_\kappa^{-1}$ as $\big( -\Delta_m^{-1} \cdot, \cdot \big) = \| \cdot \|_{\left(H^1(\Omega) \cap L_0^2(\Omega) \right)'}^2$, and $\big( -\Delta_\kappa^{-1} \cdot, \cdot \big) = \| \cdot \|_{\left(H^1(\Omega) \cap L_0^2(\Omega) \right)'}^2$, yields 
\begin{align*}
 &\underset{t\in[0,T]}{\mathrm{ess\, sup}}\,\|\tilde\pf(t)\|_{\left(H^1(\Omega) \cap L_0^2(\Omega) \right)'}^2 + \underset{t\in[0,T]}{\mathrm{ess\, sup}}\,\|\tilde\theta(t)\|_{\left(H^1(\Omega) \cap L_0^2(\Omega) \right)'}^2 \\
 &\qquad\qquad+ 2\int_0^T  |||X_1 - X_2|||^2 - \delta \|\bm{\tilde{u}}\|_{\vecSpH}^2 - \gamma\frac{\ell}{4} \|\nabla \tilde\varphi\|_{\vecSpL}^2 \;dt \\
 &\quad \leq C \bigg[\int_0^T \left(\|\tilde{R}\|_{\left(H^1(\Omega) \cap L_0^2(\Omega) \right)'}^2 + \|\bm{\tilde f}\|_{\bm{H^{-1}(\Omega)}}^2 + \|\tilde{S}_f\|_{\left(H^1(\Omega) \cap L_0^2(\Omega) \right)'}^2\right) \;dt + \|\tilde\pf(0)\|_{\left(H^1(\Omega) \cap L_0^2(\Omega) \right)'}^2 \\
 &\qquad \qquad + \|\tilde\theta(0)\|_{\left(H^1(\Omega) \cap L_0^2(\Omega) \right)'}^2 + |\Omega| \int_0^T \mathbox{\tilde{\varphi}^\Omega}^2 \;dt\bigg].
\end{align*}
Based on the norm definition in \eqref{eq:sub:gradient-flow:norm} using the assumptions of Vegard's law, cf.\ (B4), and homogeneous elasticity, cf.\ (B3), we get
\begin{align*}
 |||X_1 - X_2|||^2 &= \big( \delta_{(\varphi, \bm \varepsilon, \theta)} \mathcal{E}_\mathrm{quad}(X_1) - \delta_{(\varphi, \bm \varepsilon, \theta)} \mathcal{E}_\mathrm{quad}(X_2), X_1 - X_2 \big) = 2 \mathcal{E}_{\mathrm{quad}}(\tilde\pf, \bm{\varepsilon}(\bm{\tilde u}), \tilde\theta) \\
 &= \gamma \ell \big( \nabla \tilde\pf, \nabla \tilde\pf \big) + \big( \left(\bm{\varepsilon}(\bm{\tilde u}) - \bm{\hat \varepsilon}\tilde\pf\right),\mathbb{C}\left(\bm{\varepsilon}(\bm{\tilde u}) - \bm{\hat \varepsilon}\tilde\pf\right) \big) + \big( M(\tilde\theta - \alpha \nabla \cdot \bm{\tilde u}), \tilde\theta - \alpha \nabla \cdot \bm{\tilde u} \big) \\
 &\geq \gamma\ell\|\nabla\tilde\pf\|_{\vecSpL}^2 + c_{\mathbb{C}} \|\bm{\varepsilon}(\bm{\tilde u}) - \bm{\hat \varepsilon}\tilde\pf\|_{\tensSpL}^2 + M \|\tilde\theta - \alpha \nabla \cdot \bm{\tilde u}\|_{L^2(\Omega)}^2.
\end{align*}
For the first term on the right hand side, we obtain from~\eqref{eq:poincare-wirtinger}
\begin{align*}
 \gamma \ell \| \nabla \tilde\pf \|^2_{L^2(\Omega)} &\geq \frac{\gamma \ell}{2} \| \nabla \tilde\pf \|^2_{L^2(\Omega)} + \frac{\gamma \ell}{2 C_P} \| \tilde\pf - \tilde{\varphi}^\Omega \|^2_{L^2(\Omega)} = \frac{\gamma \ell}{2} \| \nabla \tilde\pf \|^2_{L^2(\Omega)} + \frac{\gamma \ell}{2 C_P} \| \tilde\pf \|^2_{L^2(\Omega)} - \frac{\gamma \ell}{2C_P}\, |\Omega| \mathbox{\tilde{\varphi}^\Omega}^2 \\
 &\geq C_1 \| \tilde\pf \|^2_{H^1(\Omega)} + \frac{\gamma \ell}{4} \| \nabla \tilde\pf \|^2_{L^2(\Omega)} + \frac{\gamma \ell}{4 C_P} \| \tilde\pf \|^2_{L^2(\Omega)} - C_4 |\Omega| \mathbox{\tilde{\varphi}^\Omega}^2.
\end{align*} 
for suitable constants $C_1,\ C_4>0$. By employing standard binomial arguments and Young's inequality, one can deduce that there exist constants $C_2,\ C_3,\ \xi >0$ such that
\begin{align*}
 \frac{\gamma \ell}{4 C_P} \| \tilde\pf \|^2_{L^2(\Omega)} + c_{\mathbb{C}}\|\bm{\varepsilon}(\bm{\tilde u}) - \bm{\hat \varepsilon}\tilde\pf\|_{\tensSpL}^2 &\geq (C_2+\xi) \|\bm{\varepsilon}(\bm{\tilde u})\|_{\tensSpL}^2 \\
 \xi \|\bm{\varepsilon}(\bm{\tilde u})\|_{\tensSpL}^2 + M \|\tilde\theta - \alpha \nabla \cdot \bm{\tilde u}\|_{L^2(\Omega)}^2 &\geq C_3 \|\tilde\theta\|_{L^2(\Omega)}^2
\end{align*}
such that with $\delta=\frac{C_2}{2C_K^2}$, overall, we obtain
\begin{align*}
 |||X_1 - X_2|||^2 - \delta \|\bm{\tilde{u}}\|_{\vecSpH}^2 - \gamma\frac{\ell}{4} \|\nabla \tilde\varphi\|_{\vecSpL}^2 \geq C_1 \|\tilde\pf\|_{H^1(\Omega)}^2 + \frac{C_2}{2} \|\bm{\varepsilon}(\bm{\tilde u})\|_{\tensSpL}^2 + C_3 \|\tilde\theta\|_{L^2(\Omega)}^2 - C_4 |\Omega| \mathbox{\tilde{\varphi}^\Omega}^2.
\end{align*}
Together with $\|\cdot\|_{\left(H^1(\Omega) \cap L_0^2(\Omega) \right)'} \leq \|\cdot\|_{L^2(\Omega)}$,
Korn's inequality, and collecting constants suitably, we obtain the bound
\begin{align}
 &\underset{t\in[0,T]}{\mathrm{ess\, sup}}\,\|\tilde\pf(t)\|_{\left(H^1(\Omega) \cap L_0^2(\Omega) \right)'}^2 + \underset{t\in[0,T]}{\mathrm{ess\, sup}}\,\|\tilde\theta(t)\|_{\left(H^1(\Omega) \cap L_0^2(\Omega) \right)'}^2 + \int_0^T \|\tilde\pf\|_{H^1(\Omega)}^2 + \|\bm{\tilde u}\|_{\vecSpH}^2 + \|\tilde\theta\|_{L^2(\Omega)}^2 \;dt \nonumber \\
 &\leq C \bigg[\int_0^T \left(\|\tilde{R}\|_{L^2(\Omega)}^2 + \|\bm{\tilde f}\|_{\bm{H^{-1}(\Omega)}}^2 + \|\tilde{S}_f\|_{L^2(\Omega)}^2\right) dt + \|\tilde\pf(0)\|_{L^2(\Omega)}^2 + \|\tilde\theta(0)\|_{\left(H^1(\Omega) \cap L_0^2(\Omega) \right)'}^2 + |\Omega| \int_0^T \mathbox{\tilde{\varphi}^\Omega}^2 dt
 \bigg].
 \label{eq:continuity-I-proof}
\end{align}

In a subsequent step we are able to show continuous dependence estimates for $\tilde\mu$ and $\bm{\tilde q}$.
From~\eqref{eq:ch2weakdiffcd}, we obtain for any $\tilde{\eta}^\mathrm{ch}\in H^1(\Omega)$
\begin{align*}
 \big( \tilde\mu,\tilde\eta^\mathrm{ch}\big) &= \gamma\ell\big(\nabla\tilde\pf,\nabla\tilde\eta^\mathrm{ch}\big) + \frac{\gamma}{\ell} \big( \Psi'(\pf_1) - \Psi'(\pf_2),\tilde\eta^\mathrm{ch}\big) - \big( \bm{\hat \varepsilon}\!:\!\mathbb{C}\big(\bm \varepsilon(\bm{\tilde u}) - \bm{\hat \varepsilon}\tilde\pf\big),\tilde\eta^\mathrm{ch}\big) \\
 &\leq \left(\gamma\ell \| \nabla\tilde\pf \|_{L^2(\Omega)} + \frac{\gamma}{\ell} L_{\Psi'}  \|\tilde{\pf} \|_{L^2(\Omega)} + C_{\bm{\hat \varepsilon}} C_{\mathbb{C}} \|\bm \varepsilon(\bm{\tilde u})\|_{\tensSpL} + C_{\mathbb{C}} C_{\bm{\hat \varepsilon}}^2 \|\tilde\pf \|_{L^2(\Omega)}\right) \cdot  \|\tilde{\eta}^\mathrm{ch} \|_{H^1(\Omega)},
\end{align*}
where we used Hölder's inequality, the Lipschitz continuity of $\Psi'(\pf)$, cf.\ (B1),
along with (B3) and (B4).
Thus, by employing the conventional definition of the $H^1(\Omega)'$ norm, we get for some constant $C_5 > 0$
\begin{align}
 \int_0^T \| \tilde{\mu} \|_{H^1(\Omega)'}^2 \;dt \leq C_5 \left(\int_0^T \|\tilde{\pf} \|_{H^1(\Omega)}^2 \;dt + \int_0^T \|\bm \varepsilon(\bm{\tilde u})\|_{\tensSpL}^2 \;dt\right).
\label{eq:continuity-mu-proof}
\end{align}
Analogously, from~\eqref{eq:darcyflowweakdiffcd}, we obtain for any $\bm{\tilde\eta^{\bm q}} \in H(\Div,\Omega)$
\begin{align*}
 \big( \kappa^{-1} \bm{\tilde q},\bm{\tilde\eta^{\bm q}} \big) = \big( M(\tilde\theta - \alpha\Div\bm{\tilde u}),\nabla \cdot \bm{\tilde\eta^{\bm q}} \big) 
 \leq \left(M \big(\|\tilde{\theta}\|_{L^2(\Omega)} + \alpha \|\Div\bm{\tilde u}\|_{L^2(\Omega)}\big)\right) \cdot \|\bm{\tilde\eta^{\bm q}}\|_{H(\Div,\Omega)}
\end{align*}
which is equivalent to
\begin{align*}
 \big( \bm{\tilde q},\bm{\tilde\eta^{\bm q}} \big) \leq \left(\kappa M \big(\|\tilde{\theta}\|_{L^2(\Omega)} + \alpha \|\Div\bm{\tilde u}\|_{L^2(\Omega)}\big)\right) \cdot \|\bm{\tilde\eta^{\bm q}}\|_{H(\Div,\Omega)}
\end{align*}
as $\kappa$ is assumed to be constant, cf.\ (B2).
Thus, by employing the conventional definition of the $H(\Div,\Omega)'$ norm, we get for some constant $C_6 > 0$
\begin{align}
 \int_0^T \| \bm{\tilde q} \|_{H(\Div,\Omega)'}^2 \;dt \leq C_6 \left(\int_0^T \|\tilde{\theta}\|_{L^2(\Omega)}^2 \;dt + \int_0^T \|\bm \varepsilon(\bm{\tilde u})\|_{\tensSpL}^2 \;dt\right).
\label{eq:continuity-q-proof}
\end{align}
Combining~\eqref{eq:continuity-I-proof}, \eqref{eq:continuity-mu-proof} and \eqref{eq:continuity-q-proof} concludes the proof.
\end{proof}

In a further post-processing step we can attain continuous dependence results in stronger norms for $\pf$ and $\mu$.
\begin{lemma}
Let $(\pf_i,\mu_i)$, $i=1,2$, be as introduced in Theorem~\ref{thm:cdweak}. Then there exists a constant $C > 0$ satisfying
\begin{align}
 &\underset{t\in[0,T]}{\mathrm{ess\, sup}}\,\|\pf_1(t) - \pf_2(t)\|_{L^2(\Omega)}^2 + \int_0^T \|\mu_1 - \mu_2\|_{L^2(\Omega)}^2 \;dt \nonumber \\
 &\qquad \leq C \bigg[ \|\pf_{0,1} - \pf_{0,2}\|_{L^2(\Omega)}^2 + \int_0^T \left[\|\bm{u}_1 - \bm{u}_2\|_{\vecSpH}^2 + \|R_1 - R_2 \|_{L^2(\Omega)}^2 \right]\;dt \bigg]
\label{eq:continuity-phi-mu}
\end{align}
which together with Theorem~\ref{thm:cdweak} provides additional continuous dependence.
\end{lemma}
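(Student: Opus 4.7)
The strategy is to exploit the Cahn--Hilliard structure of the difference system~\eqref{eq:ch1weakdiffcd}--\eqref{eq:ch2weakdiffcd} by testing with two carefully chosen test functions so that the cross terms cancel, yielding an energy-type inequality in $\|\tilde\pf\|_{L^2(\Omega)}^2$ and $\|\tilde\mu\|_{L^2(\Omega)}^2$. First, I would test~\eqref{eq:ch1weakdiffcd} with $\tilde\eta^\mathrm{ch} = \tilde\pf$, which yields
\begin{align*}
    \tfrac{1}{2}\tfrac{d}{dt}\|\tilde\pf\|_{L^2(\Omega)}^2 + m\big(\nabla\tilde\mu,\nabla\tilde\pf\big) = \big(\tilde R,\tilde\pf\big),
\end{align*}
using that $\partial_t \tilde\pf \in L^2([0,T];H^1(\Omega)')$ pairs with $\tilde\pf \in L^\infty([0,T];H^1(\Omega))$ in the usual way. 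Second, I would test~\eqref{eq:ch2weakdiffcd} with $\tilde\eta^\mathrm{ch} = \tilde\mu$, which (since $(\tilde\mu,\tilde\mu)=\|\tilde\mu\|_{L^2(\Omega)}^2$) gives an identity expressing $\|\tilde\mu\|_{L^2(\Omega)}^2$ as $\gamma\ell(\nabla\tilde\pf,\nabla\tilde\mu)$ plus a Lipschitz contribution from $\Psi'(\pf_1)-\Psi'(\pf_2)$ and an elastic coupling term.

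The key observation is that the cross term $(\nabla\tilde\mu,\nabla\tilde\pf)$ appears in both identities with opposite signs after rescaling. Multiplying the first by $\gamma\ell/m$ and adding to the second eliminates the gradient coupling and produces
\begin{align*}
    \|\tilde\mu\|_{L^2(\Omega)}^2 + \tfrac{\gamma\ell}{2m}\tfrac{d}{dt}\|\tilde\pf\|_{L^2(\Omega)}^2 = \tfrac{\gamma\ell}{m}\big(\tilde R,\tilde\pf\big) + \tfrac{\gamma}{\ell}\big(\Psi'(\pf_1)-\Psi'(\pf_2),\tilde\mu\big) - \big(\bm{\hat\varepsilon}:\mathbb{C}(\bm\varepsilon(\bm{\tilde u}) - \bm{\hat\varepsilon}\tilde\pf),\tilde\mu\big).
\end{align*}

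The right-hand side can then be bounded using (B1) for the Lipschitz estimate $\|\Psi'(\pf_1)-\Psi'(\pf_2)\|_{L^2(\Omega)} \leq L_{\Psi'}\|\tilde\pf\|_{L^2(\Omega)}$ together with (B3) and (B4) for the elastic term, so that via Young's inequality with a small parameter $\epsilon > 0$ each contribution involving $\|\tilde\mu\|_{L^2(\Omega)}$ can be split into $\epsilon\|\tilde\mu\|_{L^2(\Omega)}^2$ plus a lower-order remainder controlled by $\|\tilde\pf\|_{L^2(\Omega)}^2$, $\|\bm\varepsilon(\bm{\tilde u})\|_{\tensSpL}^2$, and $\|\tilde R\|_{L^2(\Omega)}^2$. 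Absorbing $2\epsilon\|\tilde\mu\|_{L^2(\Omega)}^2$ into the left-hand side yields a differential inequality of the form
\begin{align*}
    \tfrac{d}{dt}\|\tilde\pf\|_{L^2(\Omega)}^2 + c\|\tilde\mu\|_{L^2(\Omega)}^2 \leq C\big(\|\tilde\pf\|_{L^2(\Omega)}^2 + \|\bm{\tilde u}\|_{\vecSpH}^2 + \|\tilde R\|_{L^2(\Omega)}^2\big).
\end{align*}

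Finally, integration in time and Grönwall's inequality (Lemma~\ref{lemma:Grönwall}) provide the asserted estimate~\eqref{eq:continuity-phi-mu}, since the displacement term $\|\bm{\tilde u}\|_{\vecSpH}^2$ is already controlled by the data via Theorem~\ref{thm:cdweak}. The main subtlety is ensuring the $\|\tilde\mu\|_{L^2(\Omega)}^2$ contributions on the right-hand side are genuinely absorbable on the left; this requires exploiting the exact algebraic cancellation between the two tested equations rather than just bounding them separately, which would only produce weaker $H^1(\Omega)'$ control of $\tilde\mu$ as already obtained in Theorem~\ref{thm:cdweak}.
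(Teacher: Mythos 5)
Your proposal is correct and follows essentially the same route as the paper's proof: testing \eqref{eq:ch1weakdiffcd} with $\tilde\pf$ and \eqref{eq:ch2weakdiffcd} with $\tilde\mu$, adding $\tfrac{\gamma\ell}{m}$ times the first to the second to cancel the $(\nabla\tilde\mu,\nabla\tilde\pf)$ cross term, then estimating the right-hand side via (B1), (B3), (B4) and Young's inequality before absorbing the $\|\tilde\mu\|_{L^2(\Omega)}^2$ contributions and applying Grönwall. The paper carries out exactly these steps, so no further comment is needed.
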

\begin{proof}
Testing equation \eqref{eq:ch1weakdiffcd} with $\tilde\pf$ and \eqref{eq:ch2weakdiffcd} with $\tilde\mu$, yields
\begin{subequations}
\begin{align}
 \frac{1}{2} \partial_t \|\tilde\pf\|_{L^2(\Omega)}^2 + m \big( \nabla\tilde\mu, \nabla\tilde\pf \big) &= \big( \tilde R,\tilde\pf \big) 
 \label{eq:weakdiffcdintermediate1}\\
 \|\tilde\mu\|_{L^2(\Omega)}^2 - \gamma\ell\big(\nabla\tilde\pf,\nabla\tilde\mu\big) -  \frac{\gamma}{\ell}\big( \Psi'(\pf_1) - \Psi'(\pf_2),\tilde\mu\big) + \big( \bm{\hat \varepsilon}\!:\!\mathbb{C}\big(\bm \varepsilon(\bm{\tilde u}) -\bm{\hat \varepsilon}\tilde\pf\big),\tilde\mu\big) &= 0.
 \label{eq:weakdiffcdintermediate2}
\end{align}
\label{eq:weakdiffcdintermediate}%
\end{subequations}
We add the product of $\frac{\gamma\ell}{m}$ with \eqref{eq:weakdiffcdintermediate1} to \eqref{eq:weakdiffcdintermediate2} and obtain
\begin{align*}
 \frac{\gamma\ell}{2m} \partial_t &\|\tilde\pf\|_{L^2(\Omega)}^2 + \|\tilde\mu\|_{L^2(\Omega)}^2 = \frac{\gamma\ell}{m} \big( \tilde R,\tilde\pf \big) + \frac{\gamma}{\ell}\big( \Psi'(\pf_1) - \Psi'(\pf_2),\tilde\mu \big) - \big( \bm{\hat \varepsilon}\!:\!\mathbb{C}\big(\bm \varepsilon(\bm{\tilde u}) - \bm{\hat \varepsilon}\tilde\pf\big),\tilde\mu \big).
\end{align*}
Applying Hölder's inequality, Young's inequality, the Lipschitz continuity of $\Psi'(\pf)$, cf.\ (B1), as well as Vegard's law, cf.\ (B4), we obtain
\begin{align*}
 &\frac{\gamma\ell}{2m} \partial_t \|\tilde\pf\|_{L^2(\Omega)}^2 + \|\tilde\mu\|_{L^2(\Omega)}^2 \\
 &\hspace{0.3cm}\leq \frac{\gamma\ell}{m} \|\tilde R\|_{L^2(\Omega)} \|\tilde\pf\|_{L^2(\Omega)} + \frac{\gamma}{\ell}L_{\Psi'} \|\tilde\pf\|_{L^2(\Omega)} \| \tilde\mu\|_{L^2(\Omega)} + C_{\bm{\hat \varepsilon}} C_{\mathbb{C}} \|\bm \varepsilon(\bm{\tilde u})\|_{\tensSpL} \|\tilde\mu\|_{L^2(\Omega)} + C_{\mathbb{C}} C_{\bm{\hat \varepsilon}}^2 \|\tilde\pf\|_{L^2(\Omega)} \|\tilde\mu\|_{L^2(\Omega)} \\
 &\hspace{0.3cm}\leq \left(\frac{\gamma\ell}{2m} + \frac{\gamma^2}{\ell^2}L_{\Psi'}^2 + 2 C_{\mathbb{C}}^2 C_{\bm{\hat \varepsilon}}^4\right) \|\tilde\pf\|_{L^2(\Omega)}^2 + \frac{1}{2} \|\tilde\mu\|_{L^2(\Omega)}^2 +  2 C_{\bm{\hat \varepsilon}}^2 C_{\mathbb{C}}^2 \|\bm \varepsilon(\bm{\tilde u})\|_{\tensSpL}^2 + \frac{\gamma\ell}{2m} \|\tilde R\|_{L^2(\Omega)}^2.
\end{align*}
Reformulation and integrating over $[0,T]$ yields
\begin{align*}
 &\|\tilde\pf(T)\|_{L^2(\Omega)}^2 + \frac{m}{\gamma\ell} \int_0^T \|\tilde\mu\|_{L^2(\Omega)}^2 \;dt \\
 &\quad \leq \|\tilde\pf(0)\|_{L^2(\Omega)}^2 + \int_0^T \bigg[\frac{2m}{\gamma\ell} \left(\frac{\gamma\ell}{2m} + \frac{\gamma^2}{\ell^2}L_{\Psi'}^2 + 2 C_{\mathbb{C}}^2 C_{\bm{\hat \varepsilon}}^4\right) \|\tilde\pf\|_{L^2(\Omega)}^2 + \frac{4m}{\gamma\ell} C_{\bm{\hat \varepsilon}}^2 C_{\mathbb{C}}^2  \|\bm \varepsilon(\bm{\tilde u})\|_{\tensSpL}^2  + \| \tilde{R} \|_{L^2(\Omega)}^2\bigg]\;dt.
\end{align*}
Employing Grönwall's inequality, cf.\ Lemma~\ref{lemma:Grönwall}, results in the assertion.
\end{proof}

Following ideas from~\cite{Nochetto1985} also used in~\cite{Arbogast1996,Radu2004}, we show additional continuous dependence of the flux $\bm{q}$ (integrated in time) in a stronger norm.
\begin{lemma}
 Let $\bm{q}_i$, $i=1,2$, be as introduced in Theorem~\ref{thm:cdweak}.
It holds
\begin{align}
 &\underset{t\in[0,T]}{\mathrm{ess\, sup}}\,\left\| \kappa^{-1/2} \int_0^t (\bm{q}_1 - \bm{q}_2) \;ds \right\|_{\vecSpL}^2 \nonumber \\
 &\quad \leq
 3\left(T \int_0^T \|S_{f_1} - S_{f_2}\|_{L^2(\Omega)}^2 dt + \underset{t\in[0,T]}{\mathrm{ess\, sup}}\,\|\theta_1(t) - \theta_2(t)\|_{\left(H^1(\Omega) \cap L_0^2(\Omega) \right)'}^2 + \|\theta_{0,1} - \theta_{0,2}\|_{\left(H^1(\Omega) \cap L_0^2(\Omega) \right)'}^2\right),
\label{eq:continuity-II}
\end{align}
which together with Theorem~\ref{thm:cdweak} provides additional continuous dependence.
\end{lemma}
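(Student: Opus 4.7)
The plan is to mimic the Nochetto trick used in~\cite{Nochetto1985,Arbogast1996,Radu2004} and exploit the time-integrated structure of the flow-Darcy subsystem via the time-integrated flux $\bm{Q}(t) := \int_0^t(\bm{q}_1-\bm{q}_2)(s)\,ds$ as test function. Thanks to the elevated regularity $\bm{q}_i \in L^2([0,T]; H_0(\Div,\Omega))$, $\bm{Q}(t)\in H_0(\Div,\Omega)$ for a.e.\ $t\in[0,T]$, which legitimises all subsequent integrations by parts.

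First, I would integrate the difference mass balance~\eqref{eq:flowweakdiffcd} in time against a time-independent test function $\tilde{\eta}^\theta \in L^2(\Omega)$ and invoke the initial datum $\tilde\theta(0)=\tilde\theta_0$ from Definition~\ref{def:Weak solution}. This produces the pointwise identity
\begin{align*}
\Div \bm Q(t) \;=\; \tilde{\theta}_0 - \tilde\theta(t) + \int_0^t \tilde S_f(s)\,ds \;=:\; g(t) \quad \text{in } L^2(\Omega),
\end{align*}
and the boundary condition $\bm Q(t)\cdot\bm n = 0$ forces $g(t) \in L_0^2(\Omega)$.

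Second, I would integrate the difference Darcy equation~\eqref{eq:darcyflowweakdiffcd} in time and test with $\bm{\tilde\eta^{\bm q}} = \bm Q(t)$, using that $\kappa$ is constant under (B2). With $P(t) := \int_0^t (p_1-p_2)(s)\,ds$ and $p_i := M(\theta_i - \alpha\Div\bm u_i)$, this yields
\begin{align*}
\|\kappa^{-1/2} \bm Q(t)\|_{\vecSpL}^2 \;=\; \bigl(P(t), \Div \bm Q(t)\bigr) \;=\; \bigl(P(t) - P(t)^\Omega,\, g(t)\bigr),
\end{align*}
where the last equality leverages $g(t) \in L_0^2(\Omega)$. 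The very same integrated Darcy identity, now probed with all $\bm{\tilde\eta^{\bm q}} \in H_0(\Div,\Omega)$, encodes weakly $\nabla P(t) = -\kappa^{-1}\bm Q(t) \in \vecSpL$, so that $P(t) - P(t)^\Omega \in H^1(\Omega) \cap L_0^2(\Omega)$ with $\|\nabla P(t)\|_{\vecSpL} = \kappa^{-1}\|\bm Q(t)\|_{\vecSpL}$.

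Third, duality together with Poincaré-Wirtinger (absorbing all resulting constants into a generic $C$) yields
\begin{align*}
\|\kappa^{-1/2}\bm Q(t)\|_{\vecSpL}^2
&\le \|P(t) - P(t)^\Omega\|_{H^1 \cap L_0^2}\,\|g(t)\|_{(H^1(\Omega)\cap L_0^2(\Omega))'} \\
&\le C\,\|\nabla P(t)\|_{\vecSpL}\,\|g(t)\|_{(H^1(\Omega)\cap L_0^2(\Omega))'},
\end{align*}
and cancelling one factor $\|\kappa^{-1/2}\bm Q(t)\|_{\vecSpL}$ via $\|\nabla P(t)\|_{\vecSpL}=\kappa^{-1/2}\|\kappa^{-1/2}\bm Q(t)\|_{\vecSpL}$ gives $\|\kappa^{-1/2}\bm Q(t)\|^2 \le C\|g(t)\|_{(H^1(\Omega)\cap L_0^2(\Omega))'}^2$. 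Finally, decomposing $g(t)$ into its three additive contributions, invoking $(a+b+c)^2\le 3(a^2+b^2+c^2)$, the embedding $L^2(\Omega) \hookrightarrow (H^1(\Omega) \cap L_0^2(\Omega))'$, and the Cauchy-Schwarz bound $\|\int_0^t\tilde S_f\,ds\|_{L^2}^2 \le T\int_0^T\|\tilde S_f\|_{L^2}^2\,ds$ delivers~\eqref{eq:continuity-II} after taking the essential supremum in $t$.

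The main obstacle is step two: legitimately establishing the $H^1$-regularity of $P(t)$ purely from the integrated Darcy identity over $H_0(\Div,\Omega)$-test functions and then coupling it to $g(t)\in L_0^2(\Omega)$ through duality. This is precisely where the elevated flux regularity $\bm q_i \in L^2([0,T];H_0(\Div,\Omega))$ becomes decisive, effectively recasting the integrated flow-Darcy subsystem as a Neumann problem in which the natural pairing between $\nabla$ and $\Div$ closes the estimate.
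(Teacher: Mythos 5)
Your proposal is correct and follows essentially the same route as the paper: both rest on the Nochetto-type time-integration trick, testing the time-integrated Darcy equation with the integrated flux, rewriting the integrated mass balance as $\Div \bm{Q}(t) = t\tilde S_f + \tilde\theta(0) - \tilde\theta(t) \in L_0^2(\Omega)$, and splitting the three contributions. The one place you diverge is the duality step: the paper phrases it through the mixed isomorphism $\mathcal{B}$ and its inverse, which yields the \emph{exact identity} $\|\kappa^{-1/2}\bm{Q}(t)\|_{\vecSpL}^2 = \|t\tilde S_f + \tilde\theta(0) - \tilde\theta(t)\|_{(H^1(\Omega)\cap L_0^2(\Omega))'}^2$ with respect to the $\kappa$-weighted dual norm $(-\Delta_\kappa^{-1}\cdot,\cdot)$ that is used consistently throughout Section 4, so the only loss is the factor $3$ from the elementary splitting. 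Your version instead extracts the primal $H^1$-regularity of the integrated pressure $P(t)$ and closes via generic duality, Poincar\'e--Wirtinger and cancellation of one factor of $\|\kappa^{-1/2}\bm{Q}(t)\|_{\vecSpL}$; this is sound but produces an unspecified constant $C$ rather than the explicit constant $3$ claimed in the lemma. If you replace the generic dual norm by the weighted one (equivalently, test the auxiliary mixed problem defining $-\Delta_\kappa^{-1}$ against $\bm{Q}(t)$), your chain of inequalities collapses to the paper's equality and the sharp constant is recovered.
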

\begin{proof}
We consider variables integrated in time for $t\in(0,T)$
\begin{align*}
 \bm{\tilde q}_{\int}(t) := \int_0^t \bm{\tilde q} \;ds, \qquad
 \tilde{\theta}_{\int}(t) := \int_0^t \tilde{\theta} \;ds, \qquad
 \bm{\tilde u}_{\int}(t) := \int_0^t \bm{\tilde u} \;ds
\end{align*}
with the assumptions from Theorem~\ref{thm:cdweak} implying $\bm{\tilde q}_{\int} \in L^\infty([0,T]; H_0(\Div, \Omega))$, $\tilde{\theta}_{\int} \in L^{\infty}([0,T]; L^2(\Omega))$ and $\bm{\tilde u}_{\int} \in L^{\infty}([0,T]; \vecSpHTr)$ since $T$ is finite. 
By integrating~\eqref{eq:flowweakdiffcd}--\eqref{eq:darcyflowweakdiffcd} in time over the interval $(0,t)$ for $t\in(0,T)$, we obtain
\begin{align}
 \big( \kappa^{-1} \bm{\tilde q}_{\int}(t),\bm{\tilde\eta^{\bm q}} \big) - \big( M(\tilde\theta_{\int}(t) - \alpha\Div\bm{\tilde u}_{\int}(t)),\nabla \cdot \bm{\tilde\eta^{\bm q}} \big) &= 0 
 \label{eq:darcyflowweakdiffcd-int}.
 \\
\big(\nabla\cdot \bm{\tilde q}_{\int}(t),\tilde\eta^{\theta}\big) &= \big(t \tilde S_f + \tilde\theta(0)- \tilde\theta(t),\tilde\eta^{\theta}\big)
 \label{eq:flowweakdiffcd-int} 
\end{align}
for all $\tilde{\eta}^\theta \in L^2(\Omega)$ and $\bm{\tilde\eta^{\bm q}} \in H(\Div,\Omega)$.
Let $\mathcal{B}: H_0(\Div,\Omega) \times L_0^2(\Omega) \rightarrow H_0(\Div,\Omega)' \times L_0^2(\Omega)'$  (note that $L_0^2(\Omega)' = L_0^2(\Omega)$), defined as
\begin{align*}
\langle \mathcal{B}(\bm{q},\theta), (\bm{\eta}^{\bm{q}}, \eta^\theta) \rangle := \big( \kappa^{-1} \bm{q}, \bm{\eta}^{\bm{q}} \big) - \big( \theta, \nabla \cdot \bm{\eta}^{\bm{q}} \big)  + \big( \nabla \cdot \bm{q}, \eta^\theta \big),\qquad (\bm{q},\theta),\ (\bm{\eta}^{\bm{q}}, \eta^\theta) \in H_0(\Div,\Omega) \times L_0^2(\Omega).
\end{align*}
It is well-known that $\mathcal{B}$ defines an isomorphism~\cite{Boffi2013}, allowing to also consider $\mathcal{B}^{-1}\!: H_0(\Div,\Omega)' \times L_0^2(\Omega)' \rightarrow H_0(\Div,\Omega) \times L_0^2(\Omega)$.
From well-posedness of $\mathcal{B}$, we identify the 1-1 correspondence for all $t\in[0,T]$
\begin{align*}
 \mathcal{B}\!\left(\bm{\tilde q}_{\int}, M(\tilde{\theta}_{\int} - \alpha \nabla \cdot \bm{\tilde u}_{\int}) \right) = \left(\bm{0}, t\tilde{S}_f + \tilde{\theta}(0) - \tilde{\theta}\right) \in H_0(\Div,\Omega)' \times L_0^2(\Omega)'.
\end{align*}
Note that by the fact that $\bm{\tilde q}_{\int}(t) \in H_0(\Div,\Omega)$, equation~\eqref{eq:flowweakdiffcd-int} implies $t \tilde S_f + \tilde\theta(0)- \tilde\theta(t) \in L_0^2(\Omega)$ for a.e.\ $t\in[0,T]$.
The isomorphism property of $\mathcal{B}$ in particular implies
\begin{align}
 &\left\langle \mathcal{B}\!\left(\bm{\tilde q}_{\int}, M(\tilde{\theta}_{\int} - \alpha \nabla \cdot \bm{\tilde u}_{\int}) \right), \left(\bm{\tilde q}_{\int}, M(\tilde{\theta}_{\int} - \alpha \nabla \cdot \bm{\tilde u}_{\int}) \right) \right\rangle \nonumber \\
 &\quad= \left\langle \mathcal{B}^{-1}\!\left(\bm{0}, t\tilde{S}_f + \tilde{\theta}(0) - \tilde{\theta}\right), \left(\bm{0}, t\tilde{S}_f + \tilde{\theta}(0) - \tilde{\theta}\right) \right\rangle.
\label{eq:B-B-inv}
\end{align}
For the left hand side of~\eqref{eq:B-B-inv}, by definition of $\mathcal{B}$, we obtain
\begin{align}
 \left\langle \mathcal{B}\!\left(\bm{\tilde q}_{\int}, M(\tilde{\theta}_{\int} - \alpha \nabla \cdot \bm{\tilde u}_{\int}) \right), \left(\bm{\tilde q}_{\int}, M(\tilde{\theta}_{\int} - \alpha \nabla \cdot \bm{\tilde u}_{\int}) \right) \right\rangle = \big( \kappa^{-1} \bm{\tilde q}_{\int}, \bm{\tilde q}_{\int} \big) = \| \kappa^{-1/2} \bm{\tilde q}_{\int} \|_{\vecSpL}^2.
 \label{eq:B-B-inv-lhs}
\end{align}
For the right hand side of~\eqref{eq:B-B-inv}, we can identify $-\Delta_\kappa^{-1}(\cdot) = \Pi_\theta\mathcal{B}^{-1}(\bm{0}, \cdot)\!: L_0^2(\Omega) \rightarrow L_0^2(\Omega)$, where $\Pi_\theta$ denotes the restriction onto the $\theta$-component. Thus, it holds
\begin{align}
 &\left\langle \mathcal{B}^{-1}\!\left(\bm{0}, t\tilde{S}_f + \tilde{\theta}(0) - \tilde{\theta}\right), \left(\bm{0}, t\tilde{S}_f + \tilde{\theta}(0) - \tilde{\theta}\right) \right\rangle \nonumber \\ 
 &\quad= \left( -\Delta_\kappa^{-1}\!\left(t\tilde{S}_f + \tilde{\theta}(0) - \tilde{\theta} \right), t\tilde{S}_f + \tilde{\theta}(0) - \tilde{\theta} \right) \nonumber\\
&\quad= \|t\tilde{S}_f + \tilde{\theta}(0) - \tilde{\theta} \|_{\left(H^1(\Omega) \cap L_0^2(\Omega) \right)'}^2. \label{eq:B-B-inv-rhs}
\end{align}
By expanding the right hand side using the fundamental inequality $(a+b+c)^2 \leq 3(a^2 + b^2 + c^2)$ and combining all results~\eqref{eq:B-B-inv}--\eqref{eq:B-B-inv-rhs}, and taking the supremum over $t\in[0,T]$, we conclude with the asserted continuity result for the flux $\bm{\tilde q}$.
\end{proof}

\section{Concluding remarks}\label{sec:conclusion}
In this work, we have established a well-posedness result for the recently presented Cahn-Hilliard-Biot model~\cite{Storvik2022}. The major results include the existence of weak solutions, continuous dependence with respect to initial and right hand side data, and as a consequence uniqueness of weak solutions. Both major results utilize the underlying gradient flow structure of the problem that allows for natural \textit{a priori} bounds for discrete approximations and continuous dependence estimates. In addition, we highlight the use of mixed formulations for both the evolution of the phase-field and the fluid flow model, which explicitly encode a mass conservative character - such formulation is of major relevance for practical approximations. Throughout the analysis, several assumptions are imposed. We particularly highlight the assumption of constant Biot modulus and Biot-Willis coefficient, along with secondary consolidation type regularization, that aids in passing to the limit. Additionally, we emphasize constant material parameters in the analysis of continuous dependence as well as the non-degeneracy conditions, providing a convex setting. Relaxing these assumptions remains an interesting task of future research. In addition, a dedicated study of practical discrete approximations is of future interest. However, we expect that the techniques used in this work can be adopted for the analysis of practical mixed discretization schemes of the continuous system of PDEs. In addition, the development of dedicated numerical solution strategies, building on robust iterative decoupling and exploiting the gradient flow structure of the underlying problem, is envisioned in the future.

\section*{Acknowledgement}
Funded in part by Deutsche Forschungsgemeinschaft (DFG, German Research Foundation) under Germany's Excellence Strategy - EXC 2075 – 390740016. CR acknowledges the support by the Stuttgart Center for Simulation Science (SC SimTech). JWB acknowledges support from the UoB Akademia-project FracFlow. 
FAR acknowledges the support of the VISTA program, The Norwegian Academy of Science and Letters and Equinor. The authors also thank Jan M. Nordbotten for helpful discussions. \revision{The authors further want to thank the anonymous reviewers for their thorough proofreading and valuable comments that contributed to improving the manuscript.}

\bibliographystyle{unsrt}
{\bibliography{bibliography.bib}}

\end{document}